\newtheorem{thm}{Theorem}[section]
\newtheorem{prop}[thm]{Proposition}
\newtheorem{lemma}[thm]{Lemma}
\newtheorem{cor}[thm]{Corollary}
\newtheorem{mydef}[thm]{Definition}
\newtheorem{remark}[thm]{Remark}
\numberwithin{equation}{section} 
\DeclareMathOperator{\supp}{supp}
\DeclareMathOperator{\sgn}{sgn}
\newcommand{\p}{{\partial}}
\renewcommand{\d}{\delta}
\newcommand{\R}{{\mathbb{R}}}
\newcommand{\T}{{\mathbb{T}}}
\newcommand{\g}{{\mathsf{g}}}
\newcommand{\eps}{\varepsilon}
\newcommand{\D}{\Delta}
\newcommand{\nn}{\nonumber}
\newcommand{\ep}{\varepsilon}
\newcommand{\al}{\alpha}
\newcommand{\be}{\beta}
\newcommand{\la}{\lambda}
\newcommand{\om}{\omega}
\let\div\relax
\DeclareMathOperator{\div}{\mathsf{div}}
\def\XXint#1#2#3{{\setbox0=\hbox{$#1{#2#3}{\int}$ }
\vcenter{\hbox{$#2#3$ }}\kern-.6\wd0}}
\def \hal{\frac{1}{2}}
\def\({\left(}
\def\){\right)}
\def \ep{\varepsilon}
\def\nab{\nabla}
\def\indic{\mathbf{1}}
\title{On a repulsion model with Coulomb interaction and nonlinear mobility}
\author{Antonin Chodron de Courcel\thanks{Laboratoire Alexandre Grothendieck, Institut des Hautes Études Scientifiques, Université
Paris-Saclay, CNRS, 35 Routes de Chartres, 91440 Bures-sur-Yvette, France; Email: decourcel@ihes.fr} 
\and Charles Elbar\thanks{Université Claude Bernard Lyon 1, ICJ UMR5208, CNRS, Ecole Centrale de Lyon, INSA Lyon, Université Jean Monnet, 69622
Villeurbanne, France. Email: elbar@math.univ-lyon1.fr}
}
\date{}
\begin{document}

\maketitle

\begin{abstract}
We study a scalar conservation law on the torus in which the flux $\mathbf{j}$ is composed of a Coulomb interaction and a nonlinear mobility: $\mathbf{j} = -u^m\nab\g\ast u$. We prove existence of entropy solutions and a weak–strong uniqueness principle. We also prove several properties shared among entropy solutions, in particular a lower barrier in the fast diffusion regime $m<1$. In the porous media regime $m\ge 1$, we study the decreasing rearrangement of solutions, which allows to prove an instantaneous growth of the support and a waiting time phenomenon. We also show exponential convergence of the solutions towards the spatial average in several topologies. 

\end{abstract}
\vskip .7cm

\noindent{\makebox[1in]\hrulefill}\newline
2020 \textit{Mathematics Subject Classification.} 35K55; 35Q92; 35B40; 82C22; 92C17	
\newline\textit{Keywords and phrases.} Scalar conservation laws; Entropy solutions; Fast diffusion; Porous medium; Nonlinear mobility; Coulomb interaction; Asymptotic behaviour; Rearrangement; Gradient flow

\tableofcontents

\section{Introduction}

\subsection{Context}

In many physical and biological systems, individuals tend to move away from regions that are already crowded. For instance, charged particles in plasma repel each other, and migrating cells may alter their direction and speed after colliding: a behavior known as contact inhibition of locomotion. The evident mechanism in these examples is a repulsive field generated by the population itself, which drives individuals away from high-density regions. In addition, the ability to move may vary with local concentration, leading to a nonlinear mobility. The model we study incorporates these effects in their simplest possible form. More precisely, we consider the system posed on the $d-$dimensional torus $\T^d=\R^d\setminus\mathbb{Z}^d$:

\begin{equation}\label{eq:PDE}
\begin{cases}
    \p_t u -\div (u^m \nab\g\ast u) = 0, \quad (t,x)\in (0,\infty)\times \T^d,\\
    u\vert_{t=0} = u_0.
\end{cases}
\end{equation}

Here $u\equiv u(t,x)$ is the density of individuals, $u_{0}$ is a nonnegative initial condition, and $\g$ is the Green's kernel on the torus (assumed to be of size 1), so that the repulsive forces are assumed of Coulomb type: 

\begin{equation}\label{eq:green_kernel}
-\Delta \g = \delta_{0}-1.  
\end{equation}

Depending on the exponent $m>0$, the model describes how the density influences motion:

\underline{$m>1$}: The motion increases with the density. As a result, populations do not spread instantaneously. Instead, a front is created and it propagates with a finite speed. In some cases, this front may even remain stationary for some time, before beginning to move: a waiting time phenomenon that we describe later. This behavior captures the realistic idea that expansion can happen only once there is sufficient pressure built up behind the front. 

\underline{$m=1$}: The simplest situation, where the mobility is constant.

\underline{$0<m<1$}: The motion decreases with the density. For instance, when the density is low, individuals move more freely. As a result, the population spreads rapidly, and the density becomes instantaneously positive in the whole space.

As soon as $m\ne 1$, shocks may appear in finite time. Indeed, the equation is then a conservation law with effective velocity given by $-u^{m-1}\nab\g\ast u$. Consequently, if the solution is steep near some point, the velocity may differ between the top and bottom of this steepness by an amount of order 1. On the other hand, it is known from \cite{MR1739596} that, when $m=1$, the equation propagates any Sobolev norms.

\subsection{Related works}

In the case $m=1$, Equation~\eqref{eq:PDE} was introduced to model superconductivity or superfluidity~\cite{chapman1996mean, weinan1994dynamics, MR1739596}. From the mathematical point of view, when $m\le 1$, Equation~\eqref{eq:PDE} is a gradient flow of the energy 

\begin{equation}\label{eq:energy_gf}
\mathcal{E}[u]= \frac{1}{2}\int_{\T^d}  \g \ast u \, u\,     dx,
\end{equation}

with respect to the a modified Wasserstein metric adapted to the nonlinear mobility~\cite{MR2448650, CarrilloLisiniSavareSlepcev2010}. When $m=1$ we recover the usual Wasserstein-2 distance.  In particular, the system minimizes its energy in the best way possible with respect to this metric. When $m> 1$ these distances are not well defined as the mobility is no longer concave. 

The equation can also be seen as a repulsive version of the Keller--Segel system, a general form of which being  
\begin{equation}\label{eq:KS_general}
\partial_t u - \sigma \Delta u + \div(u\,\chi(u,v)\nabla v)=0, 
\qquad -\Delta v = u-\bar{u},
\end{equation}
where $u$ is the density of individuals, $v$ the concentration of a chemosignal, $\sigma \geq 0$ a diffusion constant, and $\chi=\chi(u,v)$ the chemical sensitivity function describing how individuals respond to the chemical gradient. This system has been derived in mathematical biology by Keller and Segel in~\cite{keller1971model,keller_initiation_1970} as a canonical model for chemotaxis and its many variants have been studied extensively both in the attractive regime (chemoattraction) and in the repulsive regime (chemorepulsion). Our system fits into this general class with the choices  
$$
\sigma =0, 
\qquad 
\chi(u,v)=-u^{\,m-1},
$$

The sign in front of the sensitivity shows that the model describes chemorepulsion, rather than chemoattraction: particles move down the chemical gradient. Moreover, the specific dependence $\chi(u,v)=-u^{m-1}$ is a choice of a nonlinear chemical sensitivity. A similar model has been studied in~\cite{MR3723527,MR3124759} with a diffusion coefficient $\sigma >0$. 

When $m=1$, the system can be rigorously derived as a mean-field limit from the system
$$
\frac{dX_i}{dt} = - \frac{1}{N}\sum_{j\ne i}\nabla \g (X_i - X_j),
$$
where $X_i$ is the position of the particle $i$ and $N$ is the number of particles. It has been obtained in \cite{serfaty2020mean} that the convegence of the empirical measure $u_N(t)=N^{-1}\sum_{i=1}^N \delta_{X_i(t)}$ to the unique sufficiently regular solution of the limiting PDE \eqref{eq:PDE} with $m=1$ is propagated at all positive times. For $m\ne 1$, it is not clear whether equation \eqref{eq:PDE} can be seen as steeming from a particle system. Nevertheless, this does not prevent from using particle methods in order to simulate the equation, such as the blob method from~\cite{MR3471104, MR3913840}, the Cloud-in-Cell method (CIC) or front tracking methods.

The equation \eqref{eq:PDE} has been previously studied in \cite{CGV22FastRegularisation} when $m\in (0,1)$, and in \cite{CGV2022vortex} when $m>1$. In these works, the authors develop a rather complete theory of \emph{radial} solutions on the whole space $\R^d$. Their strategy is to use the link between the equation and its Hamilton-Jacobi formulation. More precisely, assuming $d=1$, they consider the equation
\begin{equation}\label{eq:primitiveCarillo}
    \p_t k + (\p_s k)_+^m k = 0, \quad (t,x) \in (0,\infty)^2,
\end{equation}
which is satisfied by $\displaystyle k(t,s) := \int_0^s u(t,\sigma)d\sigma$. For general dimensions $d\ge 2$, their theory naturally extends if one considers radial solutions. The advantage of considering \eqref{eq:primitiveCarillo} is that this equation satisfies a comparison principle, so that we can use the framework of viscosity solutions and develop a theory for \eqref{eq:PDE} based on that for \eqref{eq:primitiveCarillo}. In particular, the uniqueness result is stated in terms of \eqref{eq:primitiveCarillo}, not \eqref{eq:PDE}. Moreover, one can compute the characteristics of \eqref{eq:primitiveCarillo} in order to have analytical expressions of some specific solutions. In this article, we develop a theory for non-radial solutions to \eqref{eq:PDE}. 

We finally note several features of Equation~\eqref{eq:PDE}. First, we expect shocks to appear in finite time as soon as $m\ne 1$. For general dimensions $d\ge 2$, it is not clear which kind of regularity could be propagated by the equation, although some specific solutions give a hint (see the Appendix). Second, this equation does not satisfy a comparison principle in general. Lastly, in some particular cases (for instance if we do not assume a lower bound when $m<1$), it is not clear whether one can always provide a compactness criterion allowing to build weak solutions.  

Another realm of models consider interactions that are more singular than \eqref{eq:green_kernel}. That is, $\g$ is now given by $\g(x) \sim  |x|^{-s}$, for $(d-2)_+< s< d$. In this situation, the model becomes hypoelliptic and it is unclear whether this dominates the formation of shocks. We refer to \cite{stanTesoVazquez2019existence, STAN2016infinitespeed, nguyen2018porous, ACDC25}. In particular, some of our results use the strategy of \cite{ACDC25}, in particular to derive lower bounds in the fast diffusion case $m<1$ (see Theorem \ref{thm:entropysolnproperties}).

\subsection{Main results}

We summarize here the main contributions of the paper. Our first result is the existence of entropy solutions to~\eqref{eq:PDE}. The definition is as follows.

\begin{mydef}\label{def:weaksolution}
A function $u$ is an entropy solution of~\eqref{eq:PDE} if
\begin{enumerate}
    \item $u\ge 0$ and $u\in L^{\infty}(\R_{+}\times\T^d)$,
    \item for every $T>0$ and $\varphi \in C^{1}_c([0,T)\times\T^d)$,
   \begin{equation*}
        \int_{0}^{T}\int_{\T^d}
        \left[-u\,\partial_t\varphi
              +u^m\,\nabla\g\ast u\cdot\nabla\varphi \right] dx\, dt
          =\int_{\T^d}u_0(x)\varphi(0,x)\,dx 
      \end{equation*}
      \item for all $\eta\in C^2$ convex and $q'(\xi) := m\eta'(\xi)\xi^{m-1}$, we have in the sense of distributions
      \begin{equation}\label{eq:entropycondition}
          \p_t \eta(u)\le \div(q(u)\nab\g\ast u) +(q(u)-\eta'(u) u^m)(u-\bar u).
      \end{equation}
\end{enumerate}
\end{mydef}

The first result concerns the existence of entropy solutions for the model \eqref{eq:PDE}.

\begin{thm}[Existence of entropy solutions]\label{thm:existence}
Suppose $u_{0}\in L^{\infty}(\T^d)$ is nonnegative, with the additional condition $u_{0}>0$ when $0<m<1$. Then, there exists at least one entropy solution $u$ of~\eqref{eq:PDE} in the sense of Definition~\ref{def:weaksolution}. This solution moreover satisfies
\begin{enumerate}
    \item (conservation of mass) for a.e. $t>0$,
    \begin{equation*}
        \int_{\T^d} u(t) dx = \int_{\T^d} u_0dx =: \bar u_0,
    \end{equation*}
    \item (decreasing of $L^p$ norms) for all $1\le p\le \infty$ and a.e. $t>0$, 
    \begin{equation*}
        \|u(t)\|_{L^p}\le \|u_0\|_{L^p}, 
    \end{equation*}
    \item (energy dissipation) for a.e. $t>0$,
    \begin{equation*}
        \hal\int_{\T^d}\g\ast u(t) u(t) dx + \int_{0}^t\int_{\T^d} |\nab\g\ast u|^2 u^m dx\, d\tau \le \hal \int_{\T^d}\g\ast u_0 u_0dx.
    \end{equation*}
\end{enumerate}
\end{thm}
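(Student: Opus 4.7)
The plan is a vanishing-viscosity scheme. For $\eps>0$, consider
\begin{equation*}
 \p_t u^\eps - \eps \D u^\eps - \div\bigl((u^\eps)^m \nab\g \ast u^\eps\bigr) = 0,\qquad u^\eps|_{t=0}=u_0^\eps,
\end{equation*}
with $u_0^\eps$ a smooth mollification of $u_0$ (arranged so that $u_0^\eps\ge c_\eps>0$ in the fast-diffusion case $0<m<1$). Classical quasilinear parabolic theory produces a smooth global solution. A pointwise maximum/minimum principle gives $0\le u^\eps\le \|u_0\|_{L^\infty}$: at an interior extremum the viscous term has the correct sign and the nonlocal contribution reduces to $(u^\eps)^m(\bar u^\eps-u^\eps)$, which is opposite in sign to $u^\eps-\bar u^\eps$. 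In the fast-diffusion case the same principle propagates the positive lower bound to $u^\eps\ge c_\eps$, rendering $(u^\eps)^{m-1}$ bounded.

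\textbf{Uniform estimates.} The three listed properties are obtained at the $\eps$ level. Mass conservation is immediate by integration. Testing against $(u^\eps)^{p-1}$ and using the identity $-\D\g\ast u^\eps = u^\eps - \bar u^\eps$ gives
\begin{equation*}
\tfrac{1}{p}\tfrac{d}{dt}\int (u^\eps)^p = -\eps(p-1)\int (u^\eps)^{p-2}|\nab u^\eps|^2 + \tfrac{p-1}{p+m-1}\int (u^\eps)^{p+m-1}(\bar u^\eps - u^\eps),
\end{equation*}
and the last integral is nonpositive by H\"older on the unit-volume torus (combining $\bar u^\eps\le (\int(u^\eps)^{p+m})^{1/(p+m)}$ with $\int(u^\eps)^{p+m-1}\le(\int(u^\eps)^{p+m})^{(p+m-1)/(p+m)}$). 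Testing against $\g\ast u^\eps$ and integrating by parts yields
\begin{equation*}
 \tfrac{d}{dt}\Bigl[\tfrac12 \int \g\ast u^\eps \,u^\eps\Bigr] + \int (u^\eps)^m|\nab\g\ast u^\eps|^2 + \eps\bigl(\|u^\eps\|_{L^2}^2 - (\bar u^\eps)^2\bigr) = 0,
\end{equation*}
in which the $\eps$ contribution is nonnegative by Jensen.

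\textbf{Compactness and passage to the limit.} Elliptic regularity bounds $\g\ast u^\eps$ in $L^\infty_t W^{2,p}_x$ for every $p<\infty$, and convolving the equation with $\g$ yields $\p_t(\g\ast u^\eps) = \nab\g\ast\bigl((u^\eps)^m\nab\g\ast u^\eps\bigr) + \eps(\bar u^\eps - u^\eps)$ bounded in $L^\infty_t L^p_x$. By Aubin--Lions, $\g\ast u^\eps$ is strongly compact in $C^0_t W^{1,p}_x$, so in particular $\nab\g\ast u^\eps\to\nab\g\ast u$ strongly. Given a.e.\ convergence of $u^\eps$, the weak formulation passes to the limit routinely. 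For the entropy condition, multiplying the regularized equation by $\eta'(u^\eps)$ produces
\begin{equation*}
\p_t\eta(u^\eps) - \eps\D\eta(u^\eps) + \eps\eta''(u^\eps)|\nab u^\eps|^2 = \div\bigl(q(u^\eps)\nab\g\ast u^\eps\bigr) + \bigl(q(u^\eps)-\eta'(u^\eps)(u^\eps)^m\bigr)(u^\eps-\bar u^\eps);
\end{equation*}
discarding the nonnegative $\eps\eta''(u^\eps)|\nab u^\eps|^2$ and observing that $\eps\D\eta(u^\eps)\to 0$ in $\mathcal{D}'$ (since $\eta(u^\eps)$ stays bounded) yields the desired inequality. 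The three listed properties pass to the limit by lower semicontinuity of the quadratic form $\int\g\ast u\,u$ and of the $L^p$-norms.

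\textbf{Main obstacle.} The decisive step is strong compactness of $u^\eps$ itself. Since $m\neq 1$ generically produces shocks, no uniform BV or Sobolev bound on $u^\eps$ is available, and Aubin--Lions applied directly to $u^\eps$ yields only compactness in a negative Sobolev space, which is insufficient to identify the limit of the nonlinear quantities $(u^\eps)^m$ and $\eta(u^\eps)$. I would upgrade this to a.e.\ convergence either via compensated-compactness/Young-measure arguments in the spirit of DiPerna, exploiting the full family of entropy inequalities at the $\eps$ level together with the strongly convergent velocity $\nab\g\ast u^\eps$, or alternatively by a Minty-type monotonicity trick leveraging the uniform energy dissipation; the approach of \cite{ACDC25}, adapted from Riesz to Coulomb kernels, is another natural candidate.
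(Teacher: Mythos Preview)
Your regularization, maximum-principle bounds, $L^p$ monotonicity, and energy dissipation estimates are essentially identical to the paper's Proposition~2.2, down to the H\"older/Jensen trick for the sign of $\int (u^\eps)^{p+m-1}(\bar u^\eps-u^\eps)$. The entropy inequality computation and the passage to the limit in the weak and entropy formulations also match.

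The divergence is entirely in the compactness step, and here your proposal remains a list of candidates rather than an argument. The paper commits to the quantitative method of Belgacem--Jabin: one controls the functional
\[
Q_\eps(t)=\iint_{\T^{2d}} K_h(x-y)\,|u_\eps(t,x)-u_\eps(t,y)|\,dx\,dy,\qquad K_h(x)\sim(|x|+h)^{-d},
\]
by Kru\v zhkov doubling of variables on the viscous equation, then handling the commutator between $\nabla K_h$ and the transport field $\nabla\g\ast u_\eps$ via a Calder\'on--Zygmund-type lemma (their Lemma~2.3). The kinetic decomposition $\chi_\eps(x,\xi)=\indic_{\xi\le u_\eps(x)}$ is used to linearize $|u_\eps(x)^m-u_\eps(y)^m|$, and the uniform lower bound on $u_\eps$ (for $m<1$) enters to convert $|a^m-b^m|$ back to $|a-b|$. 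The output is $Q_\eps(t)\lesssim Q_\eps(0)e^{C(t)}+|\log h|^{1/\bar p}$, which after dividing by $|\log h|$ and sending $h\to 0$ gives strong $L^1$ compactness.

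Your proposed alternatives are not obviously viable here. DiPerna-style compensated compactness is a one-dimensional tool; in $d\ge 2$ the div-curl structure needed to reduce Young measures is not available for this scalar equation with nonlocal velocity. The Minty trick requires a monotone operator, which the flux $u^m\nabla\g\ast u$ is not. As for adapting \cite{ACDC25}: the paper explicitly notes that the Riesz case $\g\sim|x|^{-s}$ with $s>d-2$ is \emph{simpler} because the equation becomes hypoelliptic, yielding Sobolev regularity directly; that mechanism disappears at the Coulomb endpoint, which is precisely why the Belgacem--Jabin route is invoked instead. So the gap is real: you have correctly located the obstacle, but none of your three suggestions is the one that actually closes it.
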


Uniqueness is known only in particular cases: in one spatial dimension, or for radial solutions, as shown in~\cite{CGV2022vortex, CGV22FastRegularisation}. In the latter case, one can exploit the classical connection between one-dimensional scalar conservation laws and Hamilton–Jacobi equations, for which the theory of viscosity solutions applies. The question of uniqueness remains open in the general setting, even under entropy assumptions. Instead, we establish a weak–strong uniqueness principle: whenever a sufficiently regular (strong) solution to~\eqref{eq:PDE} exists, every entropy solution with the same initial data coincides with it. In other words, the strong solution is unique within the broader class of entropy solutions.

\begin{prop}[Weak strong uniqueness]\label{thm:weak_strong_uniqueness}
Let $u,v\in L^\infty(\R_+, L^\infty(\T^d))$ be two entropy solutions of~\eqref{eq:PDE} such that, for all $T>0$,
\begin{equation}\label{eq:est_weak_strong}
    \displaystyle \int_0^T \| \nab v^{m-1}(t)\|_{L^{d,1}}dt <+\infty.
\end{equation}
Then, there exists $C>0$ depending on $d,s$ such that
\begin{equation*}
    \| (u-v)(t)\|_{L^1} \\
    \le \| u_0-v_0\|_{L^1} \exp\bigg( C\int_0^t \|v(\tau) \|_{L^\infty} 
    \| \nab v^{m-1}(\tau)\|_{L^{d,1}}  d\tau \bigg).
\end{equation*}
\end{prop}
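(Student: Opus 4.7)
The plan is to establish the $L^1$ estimate first for smooth solutions, and then justify it for entropy solutions via Kruzhkov's doubling of variables; the final bound closes using Hölder's inequality in Lorentz spaces together with the weak Young inequality for convolution with $\nab\g$. For smooth $u, v$, I would start from the splitting
$$
u^m\nab\g\ast u - v^m\nab\g\ast v = (u^m-v^m)\nab\g\ast u + v^m\nab\g\ast(u-v),
$$
multiply $\p_t(u-v) = \div[\,\cdot\,]$ by $\sgn(u-v)$, and integrate over $\T^d$. The standard Kruzhkov identity $\sgn(u-v)\div[(u^m-v^m)\nab\g\ast u] = \div[|u^m-v^m|\nab\g\ast u]$ turns the first contribution into a pure divergence, which vanishes on $\T^d$. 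In the second, $\sgn(u-v)\,v^m\div\nab\g\ast(u-v) = v^m|u-v|$, using $\div\nab\g\ast(u-v) = u-v$ (conservation of mass gives $\bar u = \bar v$); the remaining piece is the cross term $\sgn(u-v)\nab v^m\cdot\nab\g\ast(u-v)$. Hence
$$
\frac{d}{dt}\|u-v\|_{L^1} \le \|v\|_{L^\infty}^m\|u-v\|_{L^1} + \int_{\T^d}|\nab v^m|\,|\nab\g\ast(u-v)|\,dx.
$$

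To close the estimate, I would use the chain-rule identity $\nab v^m = \tfrac{m}{m-1}v\nab v^{m-1}$ to transfer the gradient to $v^{m-1}$, and then apply Hölder in the dual Lorentz pair $(L^{d,1}, L^{d/(d-1),\infty})$ together with the weak Young inequality for $\nab\g\in L^{d/(d-1),\infty}(\T^d)$, giving
$$
\int_{\T^d}|\nab v^{m-1}|\,|\nab\g\ast(u-v)|\,dx \le C\|\nab v^{m-1}\|_{L^{d,1}}\|u-v\|_{L^1}.
$$
Collecting terms and applying Gronwall's lemma then yields the claimed exponential bound.

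For entropy solutions, the chain rule $\sgn(u-v)\nab(u^m-v^m) = \nab|u^m-v^m|$ is not available, so I would make the above rigorous through Kruzhkov's doubling of variables: apply the entropy condition (item 3 of Definition~\ref{def:weaksolution}) with the Kruzhkov entropy $\eta(\xi) = |\xi - c|$ and $q(\xi) = |\xi^m - c^m|$ (noting that $\eta'(u)u^m - q(u) = \sgn(u-c)c^m$) to $u(t,x)$ at parameter $c = v(s,y)$ and symmetrically to $v(s,y)$ at $c = u(t,x)$, integrate the sum of the two inequalities against $\varphi(t)\rho_\eps(t-s)\psi_\eps(x-y)$, and pass $\eps\to 0$. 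The algebraic identity $v(u^m - v^m) - uv(u^{m-1} - v^{m-1}) = v^m(u-v)$ shows that the source contributions from the two entropy inequalities recombine precisely into $v^m|u-v|$. The main obstacle is the diagonal limit of the nonlocal divergence terms $\div_x[|u^m-v^m|\nab\g\ast u]$ and $\div_y[|u^m-v^m|\nab\g\ast v]$: after integration by parts against $\nab_x\psi_\eps(x-y) = -\nab_y\psi_\eps(x-y)$, the limit $\eps\to 0$ reproduces the cross term $\sgn(u-v)\nab v^m\cdot\nab\g\ast(u-v)$, and the regularity $\nab v^{m-1}\in L^1_t L^{d,1}_x$ is exactly what allows this commutator-type limit—absent from the classical Kruzhkov setting due to the nonlocal flux—to be well-defined.
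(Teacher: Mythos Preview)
Your approach is essentially that of the paper: split the flux difference, turn one piece into a pure divergence via the Kruzhkov identity, bound the remaining cross term by H\"older in the Lorentz pair $(L^{d,1},L^{d/(d-1),\infty})$ combined with the weak Young (O'Neil) inequality $\|\nab\g\ast f\|_{L^{d/(d-1),\infty}}\le C\|f\|_{L^1}$, and close by Gronwall. Your outline of the doubling-of-variables justification is also in line with what the paper intends (the paper merely says ``this can be made rigorous thanks to Kr\"uzhkov's argument'').

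One correction: since $-\Delta\g=\delta_0-1$ and $\bar u=\bar v$, one has $\Delta\g\ast(u-v)=-(u-v)$, not $+(u-v)$. Hence the term you wrote as $+v^m|u-v|$ is actually $-v^m|u-v|$, which is favorable and can simply be dropped. With the correct sign one arrives directly at
\[
\frac{d}{dt}\|u-v\|_{L^1}\le \int_{\T^d}|\nab v^m|\,|\nab\g\ast(u-v)|\,dx,
\]
and after $\nab v^m=\tfrac{m}{m-1}\,v\,\nab v^{m-1}$ and the Lorentz estimate this reproduces exactly the exponential in the statement, without the spurious $\|v\|_{L^\infty}^m$ contribution your sign slip introduced.
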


\begin{remark}
When $d\ge 2$, the space $W^{1,(d,1)}(\T^d)$ is continuously embedded in $C^0(\T^d)$. However, as we expect shocks, we don't expect the solutions to remain continuous. 
\end{remark}

Provided the initial condition is smooth enough (more precisely, $u_{0}\in W^{1,\infty}(\T^d)$), the Cauchy problem \eqref{eq:PDE} is locally well-posed.

\begin{prop}\label{prop:strong_solution}Let $d\ge 1$ and $m,c>0$. Assume either $u_{0}\ge c$ or $m\ge2$. Let $u$ be an entropy solution to \eqref{eq:PDE} constructed in Theorem~\ref{thm:existence} with initial datum $u_{0}\in W^{1,\infty}(\T^d)$. Then there exists a time $T_* >0$ depending on the initial condition 
such that, 
$$
\nabla u \in L^{\infty}(0,T_*\, ; L^{\infty}(\T^d)).
$$
In particular, this solution is the unique entropy solution on $[0,T_*)$.
\end{prop}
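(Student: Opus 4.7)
The plan is to transfer a short-time $W^{1,\infty}$ a priori estimate from the smooth approximations $(u^{\eps})_{\eps>0}$ underlying the construction of $u$ in Theorem~\ref{thm:existence} (e.g.\ vanishing-viscosity), and then invoke the weak--strong principle of Proposition~\ref{thm:weak_strong_uniqueness}. Writing the regularized equation in transport form
\[
\p_t u^{\eps}+V^{\eps}\cdot\nab u^{\eps}
=(u^{\eps})^m(\bar u_0-u^{\eps})+\eps\,\D u^{\eps},\qquad
V^{\eps}:=-m(u^{\eps})^{m-1}\nab\g\ast u^{\eps},
\]
I first propagate a positive lower barrier when $u_{0}\ge c$: a maximum principle argument at an interior minimum yields $u^{\eps}(t,\cdot)\ge c_{*}:=\min(c,\bar u_{0})>0$, since at $u^{\eps}=c_{*}\le\bar u_{0}$ we have $\nab u^{\eps}=0$, $\D u^{\eps}\ge 0$, and the source $(u^{\eps})^{m}(\bar u_{0}-u^{\eps})\ge 0$. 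This barrier is used only to absorb the singular factor $(u^{\eps})^{m-2}$ produced when differentiating $V^{\eps}$; when $m\ge 2$ no such bound is required.

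The heart of the argument is a Gronwall-type ODE for $y(t):=\|\nab u^{\eps}(t)\|_{L^{\infty}}$. From
\[
\nab V^{\eps}=-m(m-1)(u^{\eps})^{m-2}\,\nab u^{\eps}\otimes\nab\g\ast u^{\eps}
-m(u^{\eps})^{m-1}\,\nab^{2}\g\ast u^{\eps},
\]
the first contribution is bounded by $C(\|u_{0}\|_{\infty},c_{*},m)\,y$ via Young's inequality $\|\nab\g\ast u^{\eps}\|_{L^{\infty}}\le\|\nab\g\|_{L^{1}}\|u_{0}\|_{\infty}$ together with the lower barrier (or $m\ge 2$). For the second contribution, na\"ive Calder\'on--Zygmund control fails in $L^{\infty}$; instead I will exploit the Coulomb structure. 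From $-\D(\g\ast u^{\eps})=u^{\eps}-\bar u_{0}$, elliptic Schauder theory on $\T^{d}$ gives, for any $\alpha\in(0,1)$,
\[
\|\nab^{2}\g\ast u^{\eps}\|_{L^{\infty}}
\le C_{\alpha}\|u^{\eps}\|_{C^{\alpha}(\T^{d})}
\le C_{\alpha}\|u_{0}\|_{\infty}^{1-\alpha}\,y^{\alpha},
\]
the second step being the standard interpolation between $L^{\infty}$ and the Lipschitz norm. Testing the equation satisfied by $\nab u^{\eps}$ by the parabolic maximum principle (or an $L^{p}$ estimate with $p\to\infty$), and using $\|\nab[(u^{\eps})^{m}(\bar u_{0}-u^{\eps})]\|_{L^{\infty}}\le Cy$, one closes the differential inequality
\[
\dot y(t)\le C\bigl(y(t)+y(t)^{1+\alpha}+y(t)^{2}\bigr)\le C\bigl(1+y(t)^{2}\bigr),
\]
with $C$ independent of $\eps$. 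This provides a uniform bound on $[0,T_{*}]$, where $T_{*}>0$ depends only on $\|u_{0}\|_{W^{1,\infty}}$, $m$ and $c_{*}$.

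Passing to the limit $\eps\to 0$, lower semi-continuity of the $L^{\infty}$ norm transfers the bound to $u$, yielding $\nab u\in L^{\infty}(0,T_{*};L^{\infty}(\T^{d}))$. In either scenario ($u\ge c_{*}$ or $m\ge 2$), the factor $\nab u^{m-1}=(m-1)u^{m-2}\nab u$ is then bounded on $[0,T_{*}]\times\T^{d}$, hence in $L^{1}(0,T_{*};L^{d,1}(\T^{d}))$. Proposition~\ref{thm:weak_strong_uniqueness} then identifies $u$ with every entropy solution sharing the same initial datum on $[0,T_{*}]$, giving the claimed uniqueness. The principal obstacle is the $L^{\infty}$ bound on $\nab^{2}\g\ast u^{\eps}$: a Beale--Kato--Majda-type inequality with logarithmic loss would also close the ODE, but the Schauder plus interpolation route above is sharper for local well-posedness and relies only on the benign elliptic Coulomb structure.
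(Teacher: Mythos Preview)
Your proof is correct and follows the same overall scheme as the paper: propagate a $W^{1,\infty}$ bound on the viscous approximations via a Riccati-type differential inequality for $y(t)=\|\nab u^{\eps}(t)\|_{L^{\infty}}$, close with Gr\"onwall, pass to the limit by lower semicontinuity, and conclude uniqueness from Proposition~\ref{thm:weak_strong_uniqueness}. The lower barrier argument and the use of either $m\ge 2$ or $u_0\ge c$ to control $(u^{\eps})^{m-2}$ are exactly as in the paper.

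The one genuine difference is your handling of $\|\nab^{2}\g\ast u^{\eps}\|_{L^{\infty}}$, which you call the ``principal obstacle'' and treat via Schauder estimates plus interpolation, producing a $y^{\alpha}$ contribution. The paper bypasses this entirely by the elementary observation that one derivative can be moved onto $u^{\eps}$ in the convolution: $\nab^{2}\g\ast u^{\eps}=\nab\g\ast\nab u^{\eps}$, whence Young's inequality gives directly
\[
\|\nab^{2}\g\ast u^{\eps}\|_{L^{\infty}}\le \|\nab\g\|_{L^{1}}\,\|\nab u^{\eps}\|_{L^{\infty}}=C_d\,y,
\]
with no need for Schauder theory, H\"older spaces, or interpolation. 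This closes the same quadratic ODE $\dot y\le C(1+y^{2})$ but with a shorter and more robust argument. Your route is valid, just unnecessarily heavy for the Coulomb kernel on the torus where $\nab\g\in L^{1}$.
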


Although uniqueness is an open problem in general, we prove that any entropy solution satisfies the following estimates, obtained in Section 3.
\begin{thm}\label{thm:entropysolnproperties}
    Any entropy solution $u$ of \eqref{eq:PDE} with initial condition $u_0 \ge 0$ satisfies the following estimates:
    \begin{enumerate}
        \item (instantaneous regularization) 
        \begin{equation*}
            u(t,x) \le \bar u_0 + (mt)^{-1/m}, \quad \text{for a.e. } t>0, x\in \T^d,
        \end{equation*}
        \item (decreasing of $L^p$ norms)
        \begin{equation*}
            t\mapsto \| u(t)\|_{L^p} \quad \text{is nonincreasing,}
        \end{equation*}
        \item (lower barrier) if $m<1$, then
        \begin{equation*}
            u(t,x) \ge C\min ( (\bar u_0 t)^\frac{1}{1-m}, \bar u_0 (1- e^{-C t})).
        \end{equation*}
    \end{enumerate}
\end{thm}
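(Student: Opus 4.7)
All three estimates follow from the entropy inequality \eqref{eq:entropycondition} with different convex $\eta$ and the torus identity $\Delta\g \ast u = \bar u_0 - u$; Kruzhkov-type entropies $\eta(\xi) = (\xi - k)_+$ and $\eta(\xi) = (k - \xi)_+$ are invoked via smooth convex approximation. For part \textbf{(1)} (instantaneous regularization), $\eta(\xi) = (\xi - k)_+$ with $k \ge \bar u_0$ gives $q(\xi) = (\xi^m - k^m)_+$ and $q(\xi) - \eta'(\xi)\xi^m = -k^m\mathbf{1}_{\xi > k}$; using $u - \bar u_0 \ge k - \bar u_0$ on $\{u > k\}$, space integration yields $\tfrac{d}{dt}\int(u - k)_+ \le -k^m(k - \bar u_0)|\{u > k\}|$. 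Setting $\phi(t, k) := \int(u - k)_+$ so that $\partial_k\phi = -|\{u > k\}|$, the estimate becomes the transport inequality $\phi_t - k^m(k - \bar u_0)\phi_k \le 0$; along the characteristic $\dot k = -k^m(k - \bar u_0)$, $\phi$ is nonincreasing. With $y := k - \bar u_0$, $\dot y = -(y + \bar u_0)^m y \le -y^{m+1}$, so ODE comparison with $\dot{\tilde y} = -\tilde y^{m+1}$ (whose solution from $+\infty$ is $(mt)^{-1/m}$) gives $k(t) \le \bar u_0 + (mt)^{-1/m}$. Taking $k_0 > \|u_0\|_\infty$ so that $\phi(0, k_0) = 0$ and using monotonicity of $\phi$ in $k$ then forces $\phi(t, \bar u_0 + (mt)^{-1/m}) = 0$.

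For part \textbf{(2)} (decrease of $L^p$), $\eta(\xi) = \xi^p/p$ yields $q - \eta'\xi^m = -\tfrac{p-1}{p+m-1}\xi^{p+m-1}$, hence
\[
\tfrac{1}{p}\tfrac{d}{dt}\|u\|_p^p \le -\tfrac{p-1}{p+m-1}\bigl(\|u\|_{p+m}^{p+m} - \bar u_0\|u\|_{p+m-1}^{p+m-1}\bigr).
\]
Jensen on the unit-volume torus gives $\bar u_0 = \int u \le \|u\|_{p+m}$, and Hölder gives $\|u\|_{p+m-1}^{p+m-1} \le \|u\|_{p+m}^{p+m-1}$; multiplying, $\bar u_0 \|u\|_{p+m-1}^{p+m-1} \le \|u\|_{p+m}^{p+m}$, so the parenthesis is nonnegative and $\|u(t)\|_p$ is nonincreasing. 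Let $p \to \infty$ to handle $p = \infty$.

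Part \textbf{(3)} (lower barrier, $m < 1$) is the most delicate. Introduce $\mu_\eps : [0,\infty) \to [\eps, \bar u_0)$ solving $\dot\mu_\eps = \mu_\eps^m(\bar u_0 - \mu_\eps)$, $\mu_\eps(0) = \eps > 0$. Applying \eqref{eq:entropycondition} with $\eta(\xi) = (\mu - \xi)_+$ at fixed $\mu \in (0, \bar u_0]$ (so $q - \eta'\xi^m = -\mu^m\mathbf{1}_{\xi \ge \mu}$) and using mass conservation $\int(u - \bar u_0) = 0$ to rewrite $\int_{u \ge \mu}(u - \bar u_0) = (\bar u_0 - \mu)|\{u < \mu\}| + \int(\mu - u)_+$, we obtain
\[
\tfrac{d}{dt}\int(\mu - u)_+ \le -\mu^m\bigl[(\bar u_0 - \mu)|\{u < \mu\}| + \int(\mu - u)_+\bigr].
\]
Setting $F_\eps(t) := \int(\mu_\eps(t) - u(t))_+$, a chain-rule computation (legitimate by smoothness of $\mu_\eps$) combined with the above at $\mu = \mu_\eps(t)$ and the ODE for $\mu_\eps$ cancels the first bracket term, leaving $F_\eps'(t) \le -\mu_\eps(t)^m F_\eps(t)$. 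Since $F_\eps(0) = \int(\eps - u_0)_+ \le \eps$, Gronwall yields $F_\eps(t) \le \eps\exp(-\int_0^t\mu_\eps^m) \to 0$ as $\eps \to 0$. Because $\mu_\eps \to \mu_0$ locally uniformly (with $\mu_0$ the maximal solution of the ODE starting at $0$) and $F_\eps \to \int(\mu_0 - u)_+$ by dominated convergence, we conclude $u(t) \ge \mu_0(t)$ a.e. Elementary ODE analysis of $\mu_0$---$\dot\mu_0 \gtrsim \bar u_0 \mu_0^m$ near $\mu_0 = 0$ and $\dot(\bar u_0 - \mu_0) \lesssim -(\bar u_0/2)^m(\bar u_0 - \mu_0)$ near $\mu_0 = \bar u_0$---then gives the claimed two-regime lower bound.

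The main obstacle I anticipate is the rigorous implementation of part (3): approximating $(\mu_\eps(t) - \cdot)_+$ by $C^2$ convex functions in \eqref{eq:entropycondition} and justifying the chain-rule differentiation of $F_\eps$ with the time-dependent threshold $\mu_\eps(t)$ (via the distributional form of the entropy inequality tested against suitable functions of $t$). Parts (1) and (2) are comparatively routine modulo the standard smooth approximation of Kruzhkov entropies and the ODE comparison step in (1).
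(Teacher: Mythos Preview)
Your proposal is correct and follows essentially the same approach as the paper. The paper likewise uses the Kruzhkov entropies $(\xi-c)_+$ and $(c-\xi)_+$ with a time-dependent threshold $c(t)$ satisfying the comparison ODE $\dot c = c^m(\bar u_0 - c)$ (its Lemma~\ref{eq:ODE}), obtains the same integrated inequality $\frac{d}{dt}\int(u-c)_+ \le -c^m\int(u-c)_+ - (c^m(c-\bar u_0)+\dot c)|\{u>c\}|$, and chooses $c$ so that the bracket vanishes; your transport-inequality/characteristics phrasing for part~(1) and your $\eps\to 0$ selection of the maximal solution $\mu_0$ in part~(3) are cosmetic variants of this. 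Part~(2) is identical to the paper's computation. The technical concern you flag (rigorously handling the time-dependent threshold in the distributional entropy inequality) is exactly the point the paper also treats somewhat formally.
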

\begin{remark}
    The lower bound obtained for any entropy solution in the case $m<1$ is the main point of this theorem. Indeed, there exists weak (non-entropic) solutions to \eqref{eq:PDE} that are not positive in the case $m<1$ (see \cite{CGV22FastRegularisation}).
\end{remark}
\begin{remark}
    The scaling law $t^\frac{1}{1-m}$ obtained in the lower bound is that of self-similar solutions (see \cite{CGV22FastRegularisation}).  
\end{remark}

A sharp difference between the cases $m<1$ and $m\ge 1$ concerns the evolution of the free boundary of the support. Indeed, as shown in Theorem \ref{thm:entropysolnproperties}, any entropy solution fills the whole domain instantaneously when $m<1$. By contrast, there is a moving front when $m\ge 1$. In the radial case \cite{CGV2022vortex}, the authors characterize this phenomenon as follows: depending on the regularity of the initial data near its boundary, there may be a waiting time during which the support does not expand, or the free boundary may move instantaneously. This characterization is made possible through the use of an integrated (Hamilton-Jacobi-type) equation for the density, through the quantity
\begin{equation*}
    k(t,s ) := \int_0^s u(t,\sigma)d\sigma, \quad u(t,x) \equiv u(t,|x|).
\end{equation*}
As already mentioned, such a tool is not available in our nonradial setting. However, we can recover part of their result by using decreasing rearrangements of solutions. 

The idea is as follows. Denoting by $u_*(t,s) := (u(t))_*(s)$ the decreasing rearrangement of $u$\footnote{We introduce these notations in detail in Section~4.}, we define 
\begin{equation*}
    k(t,s) := \int_0^s u_*(t,\sigma)d\sigma.
\end{equation*}
Studying $k$ is, formally, the analogue of the Hamilton–Jacobi formulation available in the radial case.

\begin{thm}[Growth of the support]\label{thm:waitingtime}
   Let $d\ge 1$, $m\ge1$. Consider $u_0\in L^\infty(\T^d)$ and a solution $u$ to \eqref{eq:PDE} starting at $u_0$ constructed via a vanishing viscosity method as in Theorem~\ref{thm:existence}. Define the quantity
   $$S(t):=|\supp u(t)| = |\supp u_{*}(t)|.$$
   Let $S_0:=S(0) <1$. Under the condition \begin{equation}\label{eq:condition_waiting}
        \limsup_{s\to S_0^{-}} \frac{1}{(S_0-s)^{\frac{m}{m-1}}}\int_{ s<\sigma<S_0} (u_0)_{*}(\sigma)d\sigma =+\infty,
    \end{equation}
    there is no waiting time, i.e. for a.e. $t>0$, $S(t)>S_0$.

    By contrast, if $u^{m-1}\in C^{0,1}([0,T_*)\times \T^d)$ for some $T_*>0$, then $S(t) = S_0$ for all $t<T_*$.
\end{thm}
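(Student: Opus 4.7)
The two claims have quite different character.

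Part (b) is a characteristic argument. I rewrite~\eqref{eq:PDE} as a continuity equation
\[
\partial_t u + \div(u v) = 0, \qquad v := -u^{m-1}\nab\g\ast u.
\]
Under the hypothesis $u^{m-1}\in C^{0,1}([0,T_*)\times\T^d)$, and since $\nab\g\ast u \in W^{1,\infty}$ whenever $u\in L^{\infty}$, the field $v$ is bounded and Lipschitz in $x$, uniformly in $t\in[0,T_*)$. Let $\Phi_t$ denote its Cauchy–Lipschitz flow on $\T^d$. Along any characteristic $X(t)=\Phi_t(x_0)$ a direct computation gives
\[
\tfrac{d}{dt}u(t,X(t)) = -\,u(t,X(t))\,\div v(t,X(t)),
\]
which is a linear scalar ODE. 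Therefore $u_0(x_0)=0$ forces $u(t,X(t))=0$ for all $t<T_*$, whence $v(t,X(t))=0$ and so $\dot X(t)\equiv 0$; that is, $\Phi_t$ is the identity on $\{u_0=0\}$. By bijectivity of $\Phi_t$ on $\T^d$ this also forces $\Phi_t(\{u_0>0\})=\{u_0>0\}$ as a set, hence $\{u(t)>0\}=\{u_0>0\}$ up to null sets and $S(t)=S_0$. I would make this rigorous on the vanishing viscosity approximation, where the characteristic formalism is standard, then pass to the limit.

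For Part (a) I adapt the Hamilton--Jacobi strategy of~\cite{CGV2022vortex}, letting the one-dimensional rearrangement $u_*$ play the role of the radial profile. The key step is to show that the primitive $k_\eps(t,s):=\int_0^s (u_\eps)_*(t,\sigma)\,d\sigma$ of the viscous approximation satisfies, in the viscosity sense,
\[
\partial_t k_\eps + k_\eps(\partial_s k_\eps)^m \;\le\; O(\eps),
\]
the non-radial analogue of~\eqref{eq:primitiveCarillo}. The derivation combines a Talenti--Vázquez-type rearrangement comparison for the viscous parabolic operator with a control of the nonlocal factor $\nab\g\ast u_\eps$ by the mass distribution (e.g.\ via the Riesz rearrangement inequality). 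Stability of viscosity subsolutions under uniform convergence then yields $\partial_t k + k(\partial_s k)^m\le 0$ for the limit $k$.

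The characteristics of this HJ equation satisfy $\dot s = m k (\partial_s k)^{m-1}$, and a short computation shows this speed is constant along a characteristic; the characteristic issued from $(0,s_0)$ thus reaches $s=S_0$ at time $t_*(s_0)=(S_0-s_0)/(m\,k_0 p_0^{m-1})$, where $k_0:=k(0,s_0)$ and $p_0:=u_*(0,s_0)$. Monotonicity of $u_*$ gives $p_0\ge K(0,s_0)/(S_0-s_0)$ with $K(0,s):=\int_s^{S_0}(u_0)_*(\sigma)\,d\sigma$, so
\[
t_*(s_0)\;\le\;\frac{(S_0-s_0)^m}{m\,k_0\,K(0,s_0)^{m-1}}.
\]
Since $k_0\to\bar u_0>0$ as $s_0\to S_0^-$, and the hypothesis~\eqref{eq:condition_waiting} is equivalent to $(S_0-s_0)^m/K(0,s_0)^{m-1}\to 0$ along a subsequence $s_0\to S_0^-$, we obtain $t_*(s_0)\to 0$. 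For any prescribed $t>0$ we thus find $s_0<S_0$ with $t_*(s_0)<t$: the characteristic from $s_0$ has crossed $S_0$ by time $t$, which via the subsolution property forces $k(t,S_0)<\bar u_0$, equivalent to $S(t)>S_0$.

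The main obstacle is rigorously establishing the HJ subsolution property in the non-radial setting: one has to identify the correct rearrangement inequality for the nonlocal convection--diffusion operator and check its stability in the viscosity framework as $\eps\to 0$. Part (b) is by comparison essentially an ODE exercise once the Lipschitz regularity of $v$ is extracted.
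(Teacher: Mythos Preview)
On the torus the Green's function satisfies $-\Delta\g=\delta_0-1$, not $-\Delta\g=\delta_0$. When you integrate $\div(u_\eps^m\nab\g\ast u_\eps)$ over a super-level set $\{u_\eps>\theta\}$ (writing it as $\int\nab(u_\eps^m-\theta^m)_+\cdot\nab\g\ast u_\eps$ and integrating by parts), the Laplacian produces $-(u_\eps-\bar u_\eps)$, not $-u_\eps$. The correct inequality is therefore
\[
\partial_t k_\eps + (\partial_s k_\eps)^m_+\,(k_\eps - s\bar u_0)\;\le\;0,
\]
i.e.\ equation~\eqref{eq:primitive}, not the whole-space version~\eqref{eq:primitiveCarillo} you imported. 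Your claimed inequality $\partial_t k + k(\partial_s k)^m\le 0$ is strictly stronger and does not follow. This also invalidates your characteristic computation: with the extra $-s\bar u_0$ term the characteristic speed is no longer constant. Incidentally, the derivation of the subsolution property is far easier than you anticipate---no Talenti--V\'azquez comparison or Riesz rearrangement is needed, only the level-set identity above and the sign of the viscous term on super-level sets (Lemma~\ref{lem:subsolution}).

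There is a second gap. Knowing that $k$ is merely a viscosity \emph{subsolution} does not let you conclude anything from the characteristics of the equation: characteristics control the viscosity solution, not an arbitrary subsolution, and ``the characteristic from $s_0$ has crossed $S_0$'' does not force $k(t,S_0)<\bar u_0$. The paper closes this by constructing an explicit viscosity \emph{supersolution} $\tilde k$ (Lemma~\ref{lemma:supersolution}) whose front $S_3(t)$ moves rightward at a quantified rate; comparison (Corollary~\ref{prop:comparisonprinciple}) then gives $k\le\tilde k$, hence $S(t)\ge S_3(t)>S_0$. Your heuristic can perhaps be recast in that language, but as written it does not prove the claim.

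\textbf{Part (b): a regularity error, and a genuinely different route.} Your assertion that $\nab\g\ast u\in W^{1,\infty}$ whenever $u\in L^\infty$ is false: $\nab^{\otimes 2}\g$ is a Calder\'on--Zygmund kernel, so $\nab^{\otimes 2}\g\ast u$ lies in $BMO$, not $L^\infty$, for generic $u\in L^\infty$. What is true is that $u^{m-1}\in C^{0,1}$ forces $u$ to be H\"older (for $m>1$), and then Schauder gives $\nab\g\ast u\in C^{1,\alpha}$; so $v$ is Lipschitz after this fix. Your flow argument then identifies a Lagrangian solution with frozen support, but you still need to show this \emph{is} the vanishing-viscosity solution---the viscous problem is not a transport equation, so ``make it rigorous on the approximation'' does not work directly. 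The clean way is weak--strong uniqueness (Proposition~\ref{thm:weak_strong_uniqueness}), which applies since $\nab u^{m-1}\in L^\infty\subset L^1_t L^{d,1}_x$. With these two repairs the argument is viable and is indeed different from the paper's, which stays entirely in the Hamilton--Jacobi framework: there one shows that under the Lipschitz hypothesis $k$ is an honest solution (not just a subsolution) of~\eqref{eq:primitive}, and then compares with an explicit subsolution from~\cite{CGV2022vortex} to obtain $S(t)\le S_0$, together with a monotonicity argument for $S(t)\ge S_0$. Your route gives a bit more (it pins down the support as a set, not just its measure), but requires the regularity and uniqueness machinery above.
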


\begin{remark}
The condition \eqref{eq:condition_waiting} is expected from \cite{CGV2022vortex} to be not only sufficient but necessary for the free boundary to move instantaneously. If $u_{0}$ is Lipschitz and either $m\ge 2$ or $u_{0}$ is positive, then Proposition~\ref{prop:strong_solution} yields automatically $u^{m-1}\in C^{0,1}([0,T_*)\times \T^d)$ for some $T_{*}$.
\end{remark}

Finally, we describe the asymptotic behaviour of solutions. We show exponential convergence towards the spatial average $\bar{u}_{0}$.

\begin{thm}[Asymptotic behaviour]\label{thm:asymptotic_behaviour} Let $u$ be any entropy solution of~\eqref{eq:PDE} with initial condition $u_{0}\in H^{-1}(\T^d)\cap L^{\infty}(\T^d)$. 

When $m<1$, we have for a.e. $t>0$,
\begin{align*}
    \|u(t)-\bar u_0\|_{L^1}   &\le \|u_0-\bar u_0\|_{L^1} e^{-\bar u_0^m t} \\
    \|u(t)-\bar{u}_{0}\|_{L^\infty} &\le (\|u_{0}\|_{L^\infty} - \bar{u}_{0}) e^{-\bar{u}_{0}^m t} + \hal \bar{u}_{0} e^{-2^{-m}\bar{u}_{0}^m t}, \\
    \|u(t)-\bar{u}_{0}\|_{\dot H^{-1}} &\le  \|u_{0}-\bar{u}_{0}\|_{\dot H^{-1}} e^{-C_m t}.
\end{align*}

When $m\ge 1$, assume further that $u_{0}\ge c>0$. We have for a.e. $t>0$,
    \begin{align*}
    \|u(t)-\bar u_0\|_{L^1}& \le \|u_0-\bar u_0\|_{L^1} e^{-\bar u_0^m t} \\
        \| u(t)-\bar{u}_{0}\|_{L^\infty} &\le (\|u_{0}\|_{L^\infty} - \bar{u}_{0}) e^{-\bar{u}_{0}^m t}  + (\bar{u}_{0}- c) e^{-c^mt}, \\
         \|u(t) -\bar{u}_{0}\|_{\dot H^{-1}} &\le \|u_{0} - \bar{u}_{0}\|_{\dot H^{-1}} e^{-c^mt}.
    \end{align*}
\end{thm}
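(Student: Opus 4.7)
The plan is to exploit the \emph{relaxation} built into the entropy inequality \eqref{eq:entropycondition}: by choosing convex entropies $\eta$ so that the source term $(q(u)-\eta'(u)u^m)(u-\bar u_0)$, after integration over $\T^d$, becomes a negative multiple of $\int \eta(u)\,dx$, Grönwall delivers the desired exponential decay. The three norms correspond to three choices of $\eta$; the $\dot H^{-1}$ bound additionally uses the energy dissipation of Theorem~\ref{thm:existence}.

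For the $L^1$ estimate I take $\eta(\xi)\approx|\xi-\bar u_0|$, regularized by $\sqrt{(\xi-\bar u_0)^2+\delta^2}$ with $\delta\to 0$, which yields the entropy flux $q(\xi)=|\xi^m-\bar u_0^m|$. Integrating \eqref{eq:entropycondition} kills the divergence term, and a case analysis on $\{u>\bar u_0\}$ and $\{u<\bar u_0\}$ shows that on both sets
$$(q(u)-\eta'(u)u^m)(u-\bar u_0)=-\bar u_0^m|u-\bar u_0|.$$
Grönwall produces the rate $\bar u_0^m$ in both regimes $m\ge 1$ and $m<1$.

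For the $L^\infty$ estimate I split $u-\bar u_0=(u-\bar u_0)_+-(\bar u_0-u)_+$ and run a power entropy on each piece. Taking $\eta(\xi)=(\xi-\bar u_0)_+^p$ with $p>2$, an integration by parts gives on $\{\xi>\bar u_0\}$,
$$q(\xi)-\eta'(\xi)\xi^m=-p(p-1)\int_{\bar u_0}^\xi (s-\bar u_0)^{p-2}s^m\,ds,$$
and the bound $s^m\ge \bar u_0^m$ together with $(u-\bar u_0)\ge(u-\bar u_0)_+$ leads to $\tfrac{d}{dt}\|(u-\bar u_0)_+\|_{L^p}^p\le-p\bar u_0^m\|(u-\bar u_0)_+\|_{L^p}^p$. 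Grönwall and $p\to\infty$ (using $|\T^d|=1$) close this part. Symmetrically, $\eta(\xi)=(\bar u_0-\xi)_+^p$ produces the same inequality with $u_{\min}(t)^m$ in place of $\bar u_0^m$. When $m\ge 1$ and $u_0\ge c$, running this latter entropy with threshold $M\le c$ first shows that the bound $u\ge c$ is propagated — initially $(M-u_0)_+\equiv 0$, and the dissipation identity only uses $s\ge u$ so no circularity arises — which then supplies the rate $c^m$ and gap $\bar u_0-c$. When $m<1$, Theorem~\ref{thm:entropysolnproperties} gives $u(t,\cdot)\ge \bar u_0/2$ past an explicit time $t_*$, and the trivial bound $\|(\bar u_0-u)_+\|_\infty\le \bar u_0$ on $[0,t_*]$ recovers the rate $2^{-m}\bar u_0^m$ with prefactor $\tfrac12\bar u_0$.

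For the $\dot H^{-1}$ estimate, normalize $\g$ with $\int_{\T^d}\g=0$. Then $\int \g\ast u\,u\,dx=\int|\nab\g\ast u|^2\,dx=\|u-\bar u_0\|_{\dot H^{-1}}^2$, and the energy dissipation of Theorem~\ref{thm:existence} becomes
$$\tfrac{d}{dt}\tfrac12\|u-\bar u_0\|_{\dot H^{-1}}^2=-\int u^m|\nab\g\ast u|^2\,dx\le -u_{\min}(t)^m\|u-\bar u_0\|_{\dot H^{-1}}^2.$$
When $m\ge 1$ the preserved lower bound $u\ge c$ gives the rate $c^m$; when $m<1$, integrating the time-dependent lower bound from Theorem~\ref{thm:entropysolnproperties} produces $\int_0^t u_{\min}^m(s)\,ds\ge C_m t-C_m'$, hence an exponential rate $C_m$. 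The main obstacle is the rigorous use of \eqref{eq:entropycondition} for the non-$C^2$ entropies $|\xi-\bar u_0|$ and $(\,\cdot-\bar u_0)_+^p$, which is handled by smooth convex approximation and dominated convergence in the nonlinear source; a secondary delicate point is matching the explicit constants $\tfrac12\bar u_0$ and $2^{-m}\bar u_0^m$ in the regime $m<1$, which requires tracking the transient phase before $u_{\min}$ stabilizes around $\bar u_0/2$.
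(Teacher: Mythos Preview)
Your proposal is correct. The $L^1$ and $\dot H^{-1}$ arguments are essentially identical to the paper's: the paper also takes $\eta(\xi)=|\xi-\bar u_0|$ to get the $L^1$ rate, and also differentiates the energy $\tfrac12\int\g\ast(u-\bar u_0)(u-\bar u_0)\,dx$ and bounds $u^m$ below by $\Phi_{\min u_0}(t)^m$ (equivalently $u_{\min}(t)^m$) for the $\dot H^{-1}$ rate.

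The $L^\infty$ estimate is where you genuinely diverge. The paper does \emph{not} use $L^p$ entropies with $p\to\infty$; instead it invokes the pointwise barriers $\Phi_{\text{ess\,inf}\,u_0}(t)\le u(t,x)\le\Phi_{\text{ess\,sup}\,u_0}(t)$ already established for any entropy solution in Section~3, writes $\|u(t)-\bar u_0\|_{L^\infty}\le\Phi_{\text{ess\,sup}\,u_0}(t)-\Phi_{\text{ess\,inf}\,u_0}(t)$, and reads off the explicit bounds on $\Phi_\beta$ from Lemma~\ref{eq:ODE}. This is shorter once those barriers are in hand, and it delivers the constants $\tfrac12\bar u_0$ and $2^{-m}\bar u_0^m$ immediately (for $t\ge\tau_{1/2}$) without the transient-phase bookkeeping you flag as delicate. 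Your route through power entropies is more self-contained---it rederives the propagation of the lower bound $u\ge c$ and the decay of $(u-\bar u_0)_\pm$ purely from \eqref{eq:entropycondition}---and recovers the same constants for $m\ge1$ exactly; for $m<1$ you correctly note that matching the prefactor requires handling $[0,t_*]$ separately, which the paper's ODE approach sidesteps. Both approaches are sound; the paper's is more economical given the groundwork already laid, yours stays closer to the entropy structure throughout.
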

\begin{remark}
    Interpolating between $L^1$ and $L^\infty$, we do not need the lower bound assumption $u_0 \ge c>0$ for the $L^p$ topologies, $1\le p <\infty$.
\end{remark}

We conclude this introduction by mentioning different lines of work, that generalize the previous model. In particular, for applications, it could be interesting to generalize the results to systems of interaction, with different populations, and also generalizing the pressure law.

\subsection{Possible extensions}

\paragraph{Systems of interactions}
We are interested in the properties of a system of interaction, where each population is influenced by a gradient of a pressure depending on the sum of the two densities.  

\begin{equation}\label{eq:system_PDE}
\begin{cases}
    \p_t u_1 -\div (u_1^m \nab\g\ast (u_1+u_2)) = 0, \quad (t,x)\in (0,\infty)\times \T^d,\\
    \p_t u_2 -\div (u_2^m \nab\g\ast (u_1+u_2)) = 0, \quad (t,x)\in (0,\infty)\times \T^d,\\
    u_{1}\vert_{t=0} = u_{1,0},\, u_{2}\vert_{t=0} = u_{2,0}.
\end{cases}
\end{equation}

It models two populations, where each agent follows the same potential created by the total density, but each population's ability to move depends on its own density through the nonlinear mobility. Mathematically, for $0<m<1$, this system is still a gradient flow of an energy, for a Wasserstein distance in a product space. It can be shown that there exists global weak solutions when $m\ge 1$ by adapting the method for a single population. However, even if we assume positive initial data, the existence of weak solutions remains open in the case $m<1$. The difficulty comes from the inability to prevent the densities from vanishing, which is a major difference compared to the single-population case. The asymptotic behaviour of each population also remains an open problem, although one expects their sum to converge to its spatial average.

\paragraph{Polytropic pressure law}

Another possible extension is the following model
\begin{equation*}
\begin{cases}
\displaystyle\partial_tu - \div (u^m \nab \g \ast u^\gamma )=0 
& \text{in } (0,\infty) \times \T^d,\\
u|_{t=0} = u_0.
\end{cases}
\end{equation*}

Here the force is a gradient of a nonlocal pressure potential, where the pressure is nonlinear with respect to the density. This system has been studied in~\cite{DaoDiaz2025} in the case $m=1$. \\

\textbf{Notations and functional settings.} We denote by $L^{p}(\T^d)$, $W^{1,p}(\T^d)$, $H^{s}(\T^d)=W^{s,2}(\T)$ the usual Lebesgue and Sobolev spaces, and by $\|\cdot\|_{L^{p}}$, $\|\cdot\|_{W^{1,p}}$ their corresponding norms. The Lorentz spaces are denoted by $L^{p,q}(\T^d)$ with $\|\cdot\|_{L^{p,q}}$ their corresponding norms. $BV(\T^d)$ is the space of bounded variation functions on the torus.  
We often write $C$ for a generic constant appearing in the different inequalities. Its value can change from one line to another, and its dependence to other constants can be specified by writing $C_{a,...}$ if it depends on the parameter $a$ and other parameters. 

\section{Existence of entropy solutions and weak-strong uniqueness}

In this section, we construct entropy solutions to \eqref{eq:PDE} via a vanishing viscosity method. We then provide a weak-strong uniqueness result, and finally show that the Cauchy problem is locally well-posed (in time), provided the initial condition is Lipschitz continuous.

\subsection{Viscous approximation}

We construct entropy solutions to \eqref{eq:PDE} via a vanishing viscosity method. The well-posedness of the viscous problem is classical. Nevertheless, passing to the limit $\varepsilon \to 0$ in the nonlinear mobility requires some strong compactness. Such a result is delicate since no Sobolev norm is proved to be propagated by the equation. We note that the situation is simpler in the more singular case where $\g(x) \sim |x|^{-s}$ as $x\to 0$, $d>s>(d-2)_+$. Indeed, the equation then becomes hypoelliptic. In our case, two possible strategies are available: one relies on the kinetic formulation of conservation laws (as in~\cite{perthame_existence_2006}), and the other on techniques developed for continuity equations (see~\cite{belgacemjabin2013}). In this work we adopt the latter approach.

Consider the following regularized problem
\begin{equation}\label{eq:viscousPDE}
\begin{cases}
    \p_t u_\ep -\div (u^m_\ep \nab\g\ast u_\ep) = \ep \D u_\ep, \quad (t,x)\in (0,\infty)\times \T^d,\\
    u\vert_{t=0} = u_{0,\ep}.
\end{cases}
\end{equation}

Here $u_{0,\ep}=u_{0}\ast\omega_{\eps}$ for some smooth mollifiers $\omega_{\eps}$. In particular, $u_{0,\eps}\to u_{0}$ in $L^{1}(\T^d)$ as $\eps\to 0$. We admit the existence of a unique classical solution to this system, but a proof on the whole space can be found in~\cite{CGV22FastRegularisation}. 

Let us first introduce the ODE with which the solutions to \eqref{eq:PDE} will be compared. The proof of the following lemma is postponed to the end of the subsection.
\begin{lemma}\label{eq:ODE}
    Let $\bar u_\ep >0$ and $0\le\be \le + \infty$ be two constants. Consider the equation
    \begin{equation*}
    \begin{cases}
         \displaystyle \frac{d}{dt} \Phi_\be(t) = \Phi_\be(t)^m(\bar u_\ep - \Phi_\be(t)), \\
         \Phi_\be(t=0) = \beta.
    \end{cases}
    \end{equation*}
    If $\bar u_\ep <\be$, the unique solution decreases and satisfies
    \begin{equation*}
        \forall t\ge0, \quad \bar u_\ep \le \Phi_{\be}(t) \le \begin{cases}
            \bar u_\ep +  (tm + (\be -\bar u_\ep)^{-m})^{-1/m}, \\
            \bar u_\ep + (\be - \bar u_\ep) e^{-\bar u_\ep^m t}.
        \end{cases}
    \end{equation*}
    If $\be < \bar u_\ep$ and $m\ge 1$, the unique solution is nondecreasing and satisfies
    \begin{equation*}
        \forall t\ge 0, \quad \bar u_\ep - (\bar u_\ep - \be) e^{-\be^m t} \le \Phi_\be(t) \le \bar u_\ep.
    \end{equation*}    
    If $\be < \bar u_\ep$ and $m<1$, there is a unique \emph{positive increasing} solution, for which we define
    \begin{equation*}
        \tau_{1/2} := \inf\{t\ge 0 : \Phi_\be(t) \ge \hal \bar u_\ep \},
    \end{equation*}
    and satisfies
    \begin{equation*}
        \forall t\ge 0, \quad \bar u_\ep \ge \Phi_\be(t) \ge \begin{cases}
            \displaystyle \bar u_\ep - \hal\bar u_\ep e^{-2^{-m}\bar u_\ep^m t}, & t\in (\tau_{1/2},\infty),\\
            \displaystyle (\be^{1-m } + \hal\bar u_\ep t)^{1/(1-m)} ,& t\in [0,\tau_{1/2}].
        \end{cases}
    \end{equation*}
\end{lemma}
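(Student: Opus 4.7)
The plan is to treat the ODE as a scalar comparison problem for the vector field $F(\phi) = \phi^{m}(\bar u_{\ep}-\phi)$. Since $F$ vanishes at $0$ and $\bar u_\ep$, is positive in between and negative above $\bar u_\ep$, monotonicity of $\Phi_\beta$ and invariance of the relevant intervals are immediate. All the quantitative estimates then come from replacing the factor $\Phi_\beta^m$ by an explicit lower bound valid on the interval of interest and invoking Gr\"onwall or an elementary integration. The one genuinely delicate point is the case $m<1$ with small $\beta$: there $F$ is only H\"older near $0$ and uniqueness fails at $\beta=0$, so one must select the positive solution by hand.

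\textbf{The case $\beta > \bar u_\ep$.} I would set $\psi = \Phi_\beta - \bar u_\ep \ge 0$, so that $\psi' = -(\psi+\bar u_\ep)^m \psi$. The field $F$ is Lipschitz on $[\bar u_\ep,\infty)$ and $\psi$ cannot reach $0$ in finite time, so the solution is unique and nonincreasing. Two choices give the two bounds: using $(\psi+\bar u_\ep)^m \ge \bar u_\ep^m$ together with Gr\"onwall produces the exponential bound, while $(\psi+\bar u_\ep)^m \ge \psi^m$ yields $\psi' \le -\psi^{m+1}$; rearranged as $(\psi^{-m})' \ge m$ and integrated, this gives the algebraic rate $(mt+(\beta-\bar u_\ep)^{-m})^{-1/m}$.

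\textbf{The case $\beta < \bar u_\ep$, $m\ge 1$.} Here $F$ is locally Lipschitz on all of $[0,\infty)$, uniqueness is classical, and $\Phi_\beta$ is nondecreasing so $\Phi_\beta \ge \beta$ on $[0,\infty)$. Setting $\psi = \bar u_\ep - \Phi_\beta \ge 0$, one has $\psi' = -\Phi_\beta^m \psi \le -\beta^m \psi$, and Gr\"onwall closes the estimate.

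\textbf{The case $\beta < \bar u_\ep$, $m < 1$.} First I would construct the positive strictly increasing solution by separation of variables: since $\int_{0^+} \phi^{-m}(\bar u_\ep - \phi)^{-1}d\phi$ converges when $m<1$, a positive solution is well defined even for $\beta=0$ and $\tau_{1/2}$ is finite. On $[0,\tau_{1/2}]$ one has $\bar u_\ep - \Phi_\beta \ge \bar u_\ep/2$; dividing the ODE by $\Phi_\beta^m$ gives $\frac{d}{dt}\Phi_\beta^{1-m} = (1-m)(\bar u_\ep - \Phi_\beta) \ge (1-m)\bar u_\ep/2$, which integrates to the algebraic lower bound. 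On $[\tau_{1/2},\infty)$ one has $\Phi_\beta^m \ge 2^{-m}\bar u_\ep^m$, and repeating the previous Gr\"onwall argument on $\psi = \bar u_\ep - \Phi_\beta$ starting from $\psi(\tau_{1/2}) = \bar u_\ep/2$ gives the exponential lower bound.
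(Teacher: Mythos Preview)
Your argument is correct and essentially identical to the paper's: the same substitution $\psi = |\Phi_\beta - \bar u_\ep|$, the same pair of comparisons $(\psi+\bar u_\ep)^m \ge \max(\bar u_\ep^m,\psi^m)$ in the case $\beta>\bar u_\ep$, the same Gr\"onwall bound with $\Phi_\beta^m\ge\beta^m$ for $m\ge 1$, and the same split at $\tau_{1/2}$ for $m<1$. The only step the paper adds is minor: your Gr\"onwall on $(\tau_{1/2},\infty)$ yields $\psi(t)\le \tfrac{\bar u_\ep}{2}\,e^{-2^{-m}\bar u_\ep^m(t-\tau_{1/2})}$, whereas the stated bound carries $e^{-2^{-m}\bar u_\ep^m t}$; the paper closes this by using the algebraic lower bound on $[0,\tau_{1/2}]$ (which you already have) to estimate $\tau_{1/2}$ from above and absorb the shift into a constant.
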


The following bounds on the solution of \eqref{eq:viscousPDE} are uniform in the viscosity parameter.
\begin{prop}\label{prop:instantbounds}
    Let $d\ge 1$, $m>0$, and $u_{0,\ep} \in L^\infty(\T^d)$ such that $u_{0,\ep} \ge 0$. The unique smooth solution $u_\ep$ to \eqref{eq:viscousPDE} with initial condition $u_{0,\ep}$ satisfies, for all $t>0$ and $1\le p\le \infty$,
    \begin{enumerate}[label=\roman*.]
        \item  $\displaystyle \int_{\T^d} u_\ep(t) dx = \int_{\T^d} u_{0,\ep}dx$, 
        \item  $u_\ep(t)\ge 0$ on $\T^d$,
        \item  $t\mapsto \min u_\ep(t)$ (resp. $t\mapsto\max u_\ep(t)$) is nondecreasing (resp. nonincreasing),
        \item  $\|u_\ep(t)\|_{L^p} \le \|u_{\ep,0}\|_{L^p}$,
        \item  $\|u_\ep(t)\|_{L^\infty} \le \Phi_{\|u_{0,\ep}\|_{L^\infty}}(t)$ and $\min u_\ep(t) \ge \Phi_{\min u_{0,\ep}}(t)$, where $\Phi_{\|u_{0,\ep}\|_{L^\infty}}$ and $\Phi_{\min u_{0,\ep}}$ are defined in Lemma \ref{eq:ODE},
        \item $\displaystyle \hal \int_{\T^d} \g\ast u_\ep(t) u_\ep(t) dx  + \int_0^t\int_{\T^d} |\nab\g\ast u_\ep|^2(\tau) u_\ep^m(\tau)dxd\tau \le \hal\int_{\T^d} \g\ast u_{\ep,0} u_{\ep,0} dx$.
    \end{enumerate}
\end{prop}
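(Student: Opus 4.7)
The plan is to work directly with the smooth solution $u_\ep$ of \eqref{eq:viscousPDE} and extract each statement from pointwise maximum-principle arguments, integration by parts, and an ODE comparison. Using the Green's identity $-\D\g=\delta_0-1$, I first rewrite
$$
\p_t u_\ep = \ep\D u_\ep + m u_\ep^{m-1}\nab u_\ep\cdot\nab\g\ast u_\ep - u_\ep^m(u_\ep-\bar u_\ep),
$$
which exposes the reaction $-u^m(u-\bar u)$ that drives the extrema, while the transport contribution vanishes at any critical point of $u_\ep(t,\cdot)$.

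Claims (i)--(iii) and (v) are then read off this pointwise formulation. Integrating on $\T^d$ gives (i). At an interior minimum $x_{\mathrm m}(t)$ one has $\nab u_\ep=0$, $\D u_\ep\ge 0$, and $u_\ep(x_{\mathrm m})\le\bar u_\ep$, so $\p_t u_\ep(x_{\mathrm m})\ge -u_\ep^m(u_\ep-\bar u_\ep)\ge 0$; the dual argument at a maximum yields $\p_t u_\ep(x_{\mathrm M})\le 0$. A standard envelope argument for smooth functions then yields (iii), and in particular (ii) since $\min u_{0,\ep}\ge 0$. For (v), the same envelope inequality reads $M'(t)\le M(t)^m(\bar u_\ep-M(t))$ for $M(t):=\|u_\ep(t)\|_{L^\infty}$, and since the right-hand side is locally Lipschitz on $(0,\infty)$, standard ODE comparison against $\Phi_{\|u_{0,\ep}\|_{L^\infty}}$ from Lemma~\ref{eq:ODE} gives the upper bound; the lower bound for $\min u_\ep$ is symmetric.

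For (iv), I multiply \eqref{eq:viscousPDE} by $p u_\ep^{p-1}$ and integrate. The viscous contribution is $-\ep p(p-1)\int u_\ep^{p-2}|\nab u_\ep|^2\le 0$ for $p\ge 1$, and two integrations by parts (using $\D\g\ast u_\ep = \bar u_\ep - u_\ep$) recast the interaction term as
$$
-\frac{p(p-1)}{p+m-1}\int_{\T^d}\bigl(u_\ep^{p+m}-\bar u_\ep u_\ep^{p+m-1}\bigr)\,dx.
$$
I recognise this as an instance of the Chebyshev/correlation inequality applied to the comonotone functions $u_\ep$ and $u_\ep^{p+m-1}$: both are nondecreasing in $u_\ep$, hence $\bar u_\ep\int u_\ep^{p+m-1}\le\int u_\ep^{p+m}$ and the expression is $\le 0$. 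This gives (iv) for $1\le p<\infty$; the case $p=\infty$ follows from (iii). Finally, (vi) is obtained by testing the equation against $\g\ast u_\ep$: by symmetry of $\g$, the time derivative yields $\tfrac{d}{dt}\bigl(\tfrac12\int\g\ast u_\ep\,u_\ep\bigr)$; an integration by parts on the flux produces the dissipation $-\int u_\ep^m|\nab\g\ast u_\ep|^2$; and the viscous piece becomes $\ep\int u_\ep\D\g\ast u_\ep = -\ep(\|u_\ep\|_{L^2}^2-\bar u_\ep^2)\le 0$ by Jensen, which is discarded to obtain the claimed inequality.

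The main obstacle I anticipate lies in (iv): after integration by parts the resulting expression $\int u_\ep^{p+m}-\bar u_\ep\int u_\ep^{p+m-1}$ is not obviously signed, and identifying it as a correlation between two comonotone functions of $u_\ep$ is the key step. The remainder reduces to pointwise maximum-principle arguments on the closed manifold $\T^d$ and the ODE comparison furnished by Lemma~\ref{eq:ODE}.
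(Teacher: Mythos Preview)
Your proposal is correct and follows essentially the same strategy as the paper: pointwise extremum arguments for (ii), (iii), (v), multiplication by $pu_\ep^{p-1}$ for (iv), and testing against $\g\ast u_\ep$ for (vi). The only noteworthy variation is in (iv): to show $\int u_\ep^{p+m}\ge \bar u_\ep\int u_\ep^{p+m-1}$, the paper uses H\"older followed by Jensen, whereas you invoke the Chebyshev correlation inequality for the comonotone pair $(u_\ep,u_\ep^{p+m-1})$ --- an equally valid and slightly more direct justification of the same inequality.
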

\begin{remark}\label{remark:support}
    We deduce from the fifth point that $u_\ep$ has full support inside $\T^d$ for positive times when $m<1$. More precisely, we have derived a lower barrier on solutions:
    \begin{equation*}
        \forall t\ge 0, \forall x\in\T^d, \quad u_\ep(t,x) \ge \begin{cases}
            \displaystyle \bar u_\ep - \hal\bar u_\ep e^{-2^{-m}\bar  u_\ep^m t}, & t\in (\tau_{1/2},\infty),\\
            \displaystyle \left(\min (u_{\eps,0})^{1-m } + \hal\bar u_\ep t\right)^{1/(1-m)} ,& t\in [0,\tau_{1/2}].
        \end{cases}
    \end{equation*}
    Furthermore, the short-time scaling $t^{1/(1-m)}$ is that of self-similar solutions to \eqref{eq:PDE}, as given in \cite{CGV22FastRegularisation} for the Euclidean setting.
\end{remark}
\begin{remark}
    If we do not assume the initial condition $u_{0,\ep}$ to be bounded, we still obtain some kind of regularization, since 
    \begin{equation*}
        \forall t> 0, \quad \|u_\ep(t) \|_{L^\infty} \le \Phi_{+\infty}(t) \le \bar u_\ep + (tm)^{-1/m}.
    \end{equation*}
\end{remark}
We will need the following lemma, which can be found in \cite[Lemma 2.5]{ACDC25}.
\begin{lemma}\label{lem:mincurve}
    Let $T>0$ and $\mu\in C^1( [0,T]\times \T^d)$. Consider $\bar x: [0,T]\to \T^d$ such that, for all $t\in [0, T]$, $\bar x(t)$ is a minimum (resp. maximum) point of $\mu(t,\cdot)$. Assume moreover that $t\mapsto \min \mu(t,\cdot)$ is nondecreasing (resp. $t\mapsto \max \mu(t,\cdot)$ is nonincreasing). Then, $t\mapsto \mu(t,\bar x(t))$ is differentiable almost everywhere and, for a.e. $t>0$,
    \begin{equation*}
        \frac{d}{dt}\mu(t,\bar x(t)) = \p_t \mu(t, \bar x(t)).
    \end{equation*}
\end{lemma}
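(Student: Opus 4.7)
The plan is a classical envelope argument, with the monotonicity assumption standing in for the Danskin-type regularity one would otherwise need to extract. I will set $m(t):=\mu(t,\bar x(t))$, which by hypothesis equals $\min_{x\in\T^d}\mu(t,\cdot)$ and is nondecreasing; monotone real functions are differentiable almost everywhere (Lebesgue's theorem), which already gives the a.e.\ differentiability of $t\mapsto \mu(t,\bar x(t))$. It only remains to identify the value $m'(t_0)=\p_t\mu(t_0,\bar x(t_0))$ at every point $t_0$ where $m$ is differentiable.

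For $h>0$, the minimizing property of $\bar x(t_0+h)$ gives $m(t_0+h)\le \mu(t_0+h,\bar x(t_0))$, hence
\[
\frac{m(t_0+h)-m(t_0)}{h}\le \frac{\mu(t_0+h,\bar x(t_0))-\mu(t_0,\bar x(t_0))}{h}\xrightarrow[h\to 0^+]{}\p_t\mu(t_0,\bar x(t_0))
\]
by the $C^1$ regularity of $\mu$, so $m'_+(t_0)\le \p_t\mu(t_0,\bar x(t_0))$. Symmetrically, the minimizing property of $\bar x(t_0-h)$ gives $m(t_0-h)\le \mu(t_0-h,\bar x(t_0))$; after a sign flip and division by $h>0$ this reads
\[
\frac{m(t_0)-m(t_0-h)}{h}\ge \frac{\mu(t_0,\bar x(t_0))-\mu(t_0-h,\bar x(t_0))}{h}\xrightarrow[h\to 0^+]{}\p_t\mu(t_0,\bar x(t_0)),
\]
so $m'_-(t_0)\ge \p_t\mu(t_0,\bar x(t_0))$. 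At the chosen $t_0$ the two one-sided derivatives coincide with $m'(t_0)$, and the two bounds squeeze $m'(t_0)=\p_t\mu(t_0,\bar x(t_0))$. The maximum case is identical after reversing inequalities.

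The only subtlety, and the reason both hypotheses are genuinely needed, is that the selection $t\mapsto \bar x(t)$ need not be continuous or even measurable when $\argmin\mu(t,\cdot)$ is multi-valued; however, the argument above deliberately never compares minimizers at different times, always testing $\mu(t_0\pm h,\cdot)$ against the fixed point $\bar x(t_0)$ on each side. This is precisely why the monotonicity of $t\mapsto \min \mu(t,\cdot)$ is essential: it supplies the a.e.\ differentiability for free, so we never need to upgrade the envelope itself to something smoother, nor to identify limits of minimizers as $t\to t_0$. I do not anticipate any substantive obstacle beyond being careful with the one-sided signs in the two comparison inequalities.
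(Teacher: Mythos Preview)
Your argument is correct. The paper does not actually prove this lemma; it merely cites \cite[Lemma~2.5]{ACDC25} and moves on, so there is no in-paper proof to compare against. Your envelope argument---using Lebesgue's theorem for monotone functions to obtain a.e.\ differentiability of $m(t)=\min_x\mu(t,x)$, then sandwiching $m'(t_0)$ between the two one-sided difference quotients evaluated at the frozen point $\bar x(t_0)$---is the standard and complete proof of this type of statement.

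One minor remark on your closing paragraph: you write that the monotonicity hypothesis is ``essential'' because it supplies a.e.\ differentiability. In fact it is not strictly needed for that purpose here, since $\mu\in C^1([0,T]\times\T^d)$ already makes $t\mapsto\min_x\mu(t,x)$ Lipschitz (by $|m(t+h)-m(t)|\le \|\p_t\mu\|_{L^\infty}|h|$), hence a.e.\ differentiable by Rademacher. The monotonicity assumption is therefore a convenience that matches how the lemma is used downstream in the paper (where $t\mapsto\min u_\ep(t)$ and $t\mapsto\max u_\ep(t)$ have just been shown monotone), rather than a genuine structural necessity. This does not affect the validity of your proof.
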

\begin{proof}[Proof of Proposition \ref{prop:instantbounds}]  
\textbf{Mass conservation and positivity.} 
Mass conservation is obtained after integrating the equation in space and using the periodic boundary conditions. The nonnegativity can be obtained from the transport equation structure. For instance one can multiply the equation by $\chi_{u_{\eps}<0}$ (up to smoothing and sending the regularization to 0) and integrate in space.
\bigbreak
\textbf{Weak maximum principle.} Let us prove that, if $u_{0,\ep} \le c$ for some $c\ge 0$, then $u_\ep(t)\le c$ for all $t\ge 0$. In the sequel, we denote $\bar u_\ep$ the total mass of $u_\ep$ (which is conserved). We have
    \begin{align}
        \frac{d}{dt}\int_{\T^d} (u_\ep(t,x) - c)_+dx &=  \int_{u_\ep(t) \ge c} (\div(u_\ep^m\nab\g\ast u_\ep)(t,x) + \ep \D u_\ep(t,x))dx\nonumber \\
        &= \int_{u_\ep\ge c} \nab u_\ep^m(t,x)\cdot\nab\g\ast u_\ep (t,x) dx \label{term1WMP}\\
        &- \int_{u_\ep \ge c} u_\ep(t,x)^m(u_\ep(t,x) - \bar u_\ep) dx + \ep \int_{u_\ep\ge c} \D u_\ep(t,x) dx.\nonumber
    \end{align}
    First, notice that $\nab (u_\ep^m -c^m)_+ = \indic_{u_\ep \ge c} \nab u_\ep^m$ almost everywhere on $\T^d$, so that \eqref{term1WMP} can be rewritten, after integrating by parts, as
    \begin{equation*}
        \int_{\T^d} \nab (u_\ep^m -c^m)_+(t,x) \cdot \nab\g\ast u_\ep(t,x)dx = \int_{\T^d} (u_\ep^m -c^m)_+(t,x) (u_\ep(t,x)-\bar u_\ep )dx.
    \end{equation*}
    Moreover, we have in the sense of distributions
    \begin{equation*}
        \D (u_\ep - c)_+= |\nab u_\ep|^2 \d_{u_\ep = c} + \indic_{u_\ep \ge c} \D u_\ep,
    \end{equation*}
    so that the viscous term is negative and can be dropped. Overall, we obtain
    \begin{equation*}
        \frac{d}{dt}\int_{\T^d} (u_\ep(t,x) - c)_+dx \le -c^m\int_{u_\ep\ge c} (u_\ep(t,x) -\bar u_\ep )dx \le 0,
    \end{equation*}
    where the last inequality is obtained if $ \bar u_\ep \le c$. Finally, we have obtained that, if $u_{0,\ep} \le c$ on $\T^d$, then in particular $\bar u_\ep \le c$ and so $u_\ep(t)\le c$ for all $t\ge 0$. The same argument proves that if $u_{0,\ep}\ge c$ for some $c\ge 0$, then $u_\ep(t)\ge c$ for all $t\ge 0$.

    This also implies, by the semigroup property, that $t\mapsto\min u_\ep(t)$ and $t\mapsto \max u_\ep(t)$ are respectively nondecreasing and nonincreasing functions.

\bigbreak
\textbf{Estimate on the maximum.} Let $t\mapsto \bar x(t)$ be a (not necessarily smooth) curve of maximum points to $u_\ep$. Evaluating the equation at $(t,\bar x(t))$ gives, thanks to Lemma \ref{lem:mincurve},
    \begin{align*}
        \frac{d}{dt}\| u_\ep(t)\|_{L^\infty} &= \p_t u_\ep(t,\bar x(t))\\
        &= \nab u_\ep^m(t,\bar x(t))\cdot\nab\g\ast u_\ep(t,\bar x(t)) - u_\ep(t,\bar x(t))^m(u_\ep(t,\bar x(t)) - \bar u_\ep ) \\
        &+\ep \D u_\ep(t,\bar x(t)).
    \end{align*}
    Using first and second order conditions at maximum points, we obtain
    \begin{equation*}
         \frac{d}{dt}\|u_\ep(t)\|_{L^\infty} + \| u_\ep(t)\|_{L^\infty}^{m+1} \le \|u_\ep(t)\|_{L^\infty}^m\bar u_\ep.
    \end{equation*}
    We can then conclude that for all $t>0$, $\|u_\ep(t)\|_{L^\infty} \le \Phi_{\|u_{0,\ep}\|_{L^\infty}}(t)$, where $\Phi_{\|u_{0,\ep}\|_{L^\infty}}$ is given in Lemma \ref{eq:ODE}.
    
\bigbreak
\textbf{Estimate on the minimum.} Let $t\mapsto \bar x(t)$ be a (not necessarily smooth) curve of minimum points to $u_\ep$. Evaluating the equation at $(t,\bar x(t))$ gives, thanks to Lemma \ref{lem:mincurve},
    \begin{align*}
        \frac{d}{dt}\min u_\ep(t) &= \p_t u_\ep(t,\bar x(t))\\
        &= \nab u_\ep^m(t,\bar x(t))\cdot\nab\g\ast u_\ep(t,\bar x(t)) - u_\ep(t,\bar x(t))^m(u_\ep(t,\bar x(t)) - \bar u_\ep) \\
        &+\ep \D u_\ep(t,\bar x(t)).
    \end{align*}
    Using first and second order conditions at minimum points, we obtain
    \begin{equation*}
         \frac{d}{dt}\min u_\ep(t) + \min u_\ep(t)^{m+1} \ge \min u_\ep(t)^m\bar u_\ep.
    \end{equation*}
    We can then conclude that for all $t>0$, $\min u_\ep(t) \ge \Phi_{\min u_{0,\ep}}(t)$.
    
\bigbreak
\textbf{Decreasing of the energy and $L^p$ norms.} Multiplying the equation by $g\ast u_{\eps}$, integrating in space and using the symmetry of the kernel gives
\begin{align*}
    \frac{d}{dt}\hal\int_{\T^d}\g\ast u_\ep(t,x) u_\ep(t,x) dx &=  -\int_{\T^d} |\nab \g \ast u_\ep(t,x)|^2 u_\ep(t,x)^m dx \\
    &+ \ep \int_{\T^d} \g\ast u_\ep(t,x) \D u_\ep(t,x) dx.
\end{align*}
The viscous term can be rewritten, integrating by parts, as
\begin{align*}
    \ep \int_{\T^d} \g\ast u_\ep(t,x) \D u_\ep(t,x) dx &= -\ep \int_{\T^d} u_\ep(t,x) (u_\ep(t,x) - \bar u_\ep) dx  \\
    &= -\ep \bigg(\int_{\T^d} u_\ep^2dx - \bar u_\ep^2\bigg) \\
    &\le 0,
\end{align*}
where the last estimate follows from Jensen's inequality. Finally, for any $1\le p <\infty$, we multiply the equation by $p u_{\eps}^{p-1}$ and integrate in space to obtain
\begin{align*}
    \frac{d}{dt}\int_{\T^d} u_\ep^p dx &= \int_{\T^d}pu_\ep^{p-1} \div(u_\ep^m\nab\g\ast u_\ep)dx+ p\ep \int_{\T^d} u_\ep^{p-1}\D u_\ep dx \\
    &= -\frac{p(p-1)}{p+m-1} \int_{\T^d} \nab u_\ep^{p+m-1}\cdot\nab\g\ast u_\ep dx \\
    &-p(p-1)\ep \int_{\T^d} u_\ep^{p-2}|\nab u_\ep|^2 dx \\
    &\le-\frac{p(p-1)}{p+m-1}\int_{\T^d} u_\ep^{p+m-1}(u_\ep -\bar u_\ep) dx.
\end{align*}
Notice that, by Holder's inequality,
\begin{equation}\label{ineq:Jensen}
    \int_{\T^d} u_\ep^{p+m-1} dx \le \int_{\T^d} u_\ep^{p+m}dx\bigg(\int_{\T^d}u_\ep^{p+m}dx\bigg)^{-\frac{1}{p+m}},
\end{equation}
and $p+m >1$ so that by Jensen's inequality $\bigg(\int_{\T^d}u_\ep^{p+m}dx\bigg)^{\frac{1}{p+m}}\ge  \int_{\T^d} u_\ep dx = \bar u_\ep$. Overall,
\begin{equation*}
     \frac{d}{dt}\int_{\T^d} u_\ep^p dx \le 0.
\end{equation*}
\end{proof}

\begin{proof}[Proof of Lemma \ref{eq:ODE}]
    First, consider the case $\be >\bar u_\ep $. Using the notation $\Psi_\be :=  \Phi_\be -\bar u_\ep$, we have both $\Psi_\be \ge 0$ and
    \begin{equation*}
        \dot\Psi_\be = \Phi^m_\be (\bar u_\ep - \Phi_\be) = -(\bar u_\ep + \Psi_\be )^m\Psi_\be.
    \end{equation*}
    This can be bounded either by $-\bar u_\ep^m \Psi_\be$ or $-\Psi_\be^{m+1}$. Therefore, $\Phi_\be(t) \le \bar u_\ep + (\be - \bar u_\ep) e^{-\bar u_\ep^m t}$ and also $\Phi_\be (t) \le\bar u_\ep +  (tm + (\be -\bar u_\ep)^{-m})^{-1/m}$. 

    Now, consider the case $\be <\bar u_\ep$. Define $\Psi_\be := -\Phi_\be + \bar u_\ep$, so that $\Psi_\be\ge 0$ and
    \begin{equation*}
        \dot\Psi_\be = - \Phi_\be^m(\bar u_\ep - \Phi_\be) = -\Phi_\be^m\Psi_\be \le -\be^m \Psi_\be.
    \end{equation*}
    Therefore, $\Phi_\be(t) \ge\bar u_\ep - (\bar u_\ep -\be ) e^{-\be^mt}$.

    If $m<1$ and $\be = 0$, there is no unique solution. In fact, $\Phi_0 \equiv 0$ is a solution, but the ``force field'' is not Lipschitz continuous. Define, for all $a\in (0,1)$,
    \begin{equation*}
        \tau_a := \inf\{t\ge 0 : \Phi_\be (t) \ge a\bar u_\ep\}.
    \end{equation*}
    On $[0,\tau_a]$, $\Phi_\be \le a\bar u_\ep$ and 
    \begin{equation*}
        \dot \Phi_\be = \Phi_\be^m(\bar u_\ep - \Phi_\be)\ge \Phi_\be^m  (1-a) \bar u_\ep.
    \end{equation*}
    This implies $\Phi_\be(t) \ge (\be^{1-m} + \bar u_\ep(1-a) t)^{1/(1-m)}$, for all $t\in [0,\tau_a]$. Using the same argument as before, but starting at $t=\tau_a$ instead of $t=0$, we obtain for all $t\in [\tau_a, \infty)$,
    \begin{equation*}
        \Phi_\be(t) \ge \bar u_\ep - \bar u_\ep (1- a) e^{-a^m\bar u_\ep^m(t-\tau_a)}.
    \end{equation*}
    This is true for all $a\in (0,1)$. By construction, we have $\Phi_\be(\tau_a) = a\bar u_\ep \ge (\bar u_\ep (1-a)\tau_a )^{1/(1-m)}$, so that 
    \begin{equation*}
        \tau_a \le \frac{a^{1-m}}{\bar u_\ep^m (1-a)}.
    \end{equation*}
    Replacing $\tau_a$ above, one obtains for $t\in [\tau_a, \infty)$,
    \begin{equation*}
        \Phi_\be(t)\ge \bar u_\ep - \bar u_\ep (1-a)e^{\frac{a}{1-a}} e^{-a^m\bar u_\ep^m t}.
    \end{equation*}
    Take $a=\hal$ to conclude.
\end{proof}

\subsection{A compactness result}\label{subsec:compactness}

The method we use in order to obtain strong-$L^1$ compactness has been developed in~\cite{belgacemjabin2013}. The idea is that a sequence $(u_\ep)_\ep$ of $L^1(\T^d)$ functions is precompact in $L^1(\T^d)$ if and only if
\begin{equation*}
    \lim_{h\to 0}\frac{1}{|\log h|} \limsup_{\ep\to 0} \iint_{\T^{2d}} K_h(x-y) |u_\ep(x)-u_\ep(y)|dxdy = 0,
\end{equation*}
where we define the kernel $K_h:\T^d\to \R$ by
\begin{equation*}
        K_h(x) = \frac{1}{||x|+h|^d} + \Gamma(x),
\end{equation*}
for some smooth $\Gamma :\T^d\to \R$ such that $K_h$ is a periodic function.
We record the following lemma, taken from \cite[Proposition 4.1]{belgacemjabin2013}. We notice that we state this lemma on the torus, whereas it was previously obtained on the Euclidean setting. Nevertheless, a carefull examination of the proof shows that it can be extended on the periodic domain.
\begin{lemma}\label{ineq:CZO}
    Let $1<p<\infty$, $a\in C^\infty(\T^d,\T^d)$ and $f\in C^\infty(\T^d)$. There exists $C>0$ depending on $p$ such that
    \begin{multline*}
        \int_{\T^d\times \T^d}\nab K_h(x-y) \cdot (a(x)-a(y)) |f(x)-f(y)|^2 dxdy \\
        \le C \big(\|\div a\|_{L^\infty} +\|\nab a\|_{L^p}\big)\iint_{\T^d\times \T^d} K_h(x-y) |f(x)-f(y)|^2dxdy \\
        +C \|f\|_{L^\infty} \|f\|_{L^{p^*}} \|\nab a\|_{L^p}|\log h|^{1/\bar p},
    \end{multline*}
    where $ 1/p+1/{p^*}=1$ and $\bar p = \min (p,2)$.
\end{lemma}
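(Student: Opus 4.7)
First, I would reduce the estimate to a genuinely singular Euclidean kernel. Decompose $K_h(z)=K_h^{\mathrm{sing}}(z)+\Gamma(z)$ with $K_h^{\mathrm{sing}}(z)=(|z|+h)^{-d}$ and $\Gamma\in C^\infty(\T^d)$ the smooth periodic correction. The $\Gamma$-contribution to $\nab K_h\cdot(a(x)-a(y))$ has a uniformly smooth kernel and is controlled directly by $\|\nab a\|_{L^p}\|f\|_{L^\infty}\|f\|_{L^{p^*}}$, hence absorbed in the last right-hand-side term. All the work then concentrates on $K_h^{\mathrm{sing}}$, which behaves as a classical Calderón--Zygmund kernel away from the diagonal.

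Second, I would use the fundamental theorem of calculus to write
\begin{equation*}
a(x)-a(y)=\Big(\int_0^1 \nab a(sx+(1-s)y)\,ds\Big)(x-y),
\end{equation*}
revealing the contraction $\nab K_h^{\mathrm{sing}}(x-y)\cdot(x-y)$. A direct computation gives the key identity
\begin{equation*}
\nab K_h^{\mathrm{sing}}(z)\cdot z=-d\,\frac{|z|}{|z|+h}\,K_h^{\mathrm{sing}}(z),
\end{equation*}
which is pointwise dominated by $d\,K_h^{\mathrm{sing}}(z)$. Splitting $\int_0^1 \nab a(sx+(1-s)y)\,ds$ into its trace part $\tfrac{1}{d}\int_0^1(\div a)(sx+(1-s)y)\,ds\cdot I$ and a traceless remainder $B(x,y)$, the trace contribution is pointwise dominated by $\|\div a\|_{L^\infty}K_h^{\mathrm{sing}}(x-y)$, producing directly the first right-hand-side term.

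Third, and this is the core difficulty, I would control the traceless part via the maximal-function bound
\begin{equation*}
|B(x,y)|\le C\bigl(M|\nab a|(x)+M|\nab a|(y)\bigr),
\end{equation*}
combined with the $L^p$-boundedness of $M$. Applying Cauchy--Schwarz to the kernel, Hölder with exponents $(p,p^*)$, and estimating $|f(x)-f(y)|$ asymmetrically (using $\|f\|_{L^\infty}$ on one copy and $\|f\|_{L^{p^*}}$ on the other) yields a bound of the desired form except for the logarithmic factor $|\log h|^{1/\bar p}$. This factor emerges from the marginal $L^1$-integrability of the kernel on $\{|z|\le 1\}$: a dyadic decomposition $|z|\sim 2^{-k}$ with $h\lesssim 2^{-k}\lesssim 1$ produces $O(|\log h|)$ shells, and a Cauchy--Schwarz (for $p\ge 2$) or Littlewood--Paley-type sum (for $1<p<2$) across these shells loses a factor $|\log h|^{1/\bar p}$ after optimization between the $L^2$ and $L^p$ endpoints. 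The main obstacle is precisely this last interpolation step; the remaining reductions are algebraic and identical to the Euclidean proof of \cite{belgacemjabin2013}. The transfer from $\R^d$ to $\T^d$ only requires that $M$ and the relevant Calderón--Zygmund operators are bounded on $L^p(\T^d)$ with constants independent of $h$, which is classical once the smoothness of $\Gamma$ is used to neutralize the periodization.
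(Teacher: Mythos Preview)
The paper does not give its own proof of this lemma: it simply records it as \cite[Proposition 4.1]{belgacemjabin2013}, with the remark that a careful reading of that Euclidean proof carries over to the torus. So your task is really to reproduce the Ben Belgacem--Jabin argument, and your first two steps (splitting off the smooth periodic correction $\Gamma$, then using the fundamental theorem of calculus and isolating the $\div a$ contribution via the radial identity $\nabla K_h^{\mathrm{sing}}(z)\cdot z=-d\,\tfrac{|z|}{|z|+h}K_h^{\mathrm{sing}}(z)$) are exactly in that spirit and are correct.

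The gap is in your Step~3. Once you replace the traceless matrix $B(x,y)$ by the crude pointwise bound $|B(x,y)|\le C\bigl(M|\nabla a|(x)+M|\nabla a|(y)\bigr)$, you are left with estimating
\[
\iint_{\T^{2d}} K_h(x-y)\,g(x)\,|f(x)-f(y)|^2\,dx\,dy,\qquad g=M|\nabla a|\in L^p,
\]
and at this stage all the angular cancellation (the fact that $z^T B z/|z|^2$ has zero spherical mean) has been thrown away. Any combination of H\"older and $|f(x)-f(y)|\le 2\|f\|_{L^\infty}$ on this expression produces the factor $\|K_h\|_{L^1}\sim|\log h|$ to the power~$1$, not $1/\bar p$; for instance $\iint K_h g|f(x)-f(y)|^2\le C\|g\|_{L^p}\|f\|_{L^\infty}\|f\|_{L^{p^*}}|\log h|$. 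Your dyadic/Cauchy--Schwarz heuristic does not rescue this: to gain $|\log h|^{1/2}$ from a sum over $O(|\log h|)$ shells you would need square-summability across scales, but there is no orthogonality left in $\iint \psi_k(x-y)g(x)|f(x)-f(y)|^2$ once $g$ is just an arbitrary $L^p$ function.

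In the actual Ben Belgacem--Jabin proof the saving comes precisely from \emph{not} taking the pointwise maximal bound on the traceless piece. The cancellation of $z^T B z/|z|^2$ on spheres makes the relevant error a genuine (truncated) Calder\'on--Zygmund / square-function type object acting on $\nabla a$, and it is the $L^p$-boundedness of that operator, summed over scales, that yields the exponent $1/\bar p=1/\min(p,2)$. So the fix is to keep the singular-integral structure of the traceless term rather than majorizing it pointwise; the rest of your outline is fine.
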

With this lemma at hand, we can now prove the following result.
\begin{prop}\label{prop:compactness}
    Denote $u_\ep$ the unique solution to \eqref{eq:viscousPDE}. Assume further that $u_{0,\ep}\ge c>0$ if $m<1$. We have, for all $t>0$,
    \begin{multline*}
        \lim_{h\to 0}\frac{1}{|\log h|}\limsup_{\ep \to 0} \iint_{\T^d\times\T^d}K_h(x-y) |u_\ep(t,x) - u_\ep(t,y)|dxdy \\
        \le C(t) \lim_{h\to 0}\frac{1}{|\log h|}\limsup_{\ep \to 0} \iint_{\T^d\times\T^d}K_h(x-y) |u_{0,\ep}(x) - u_{0,\ep}(y)|dxdy,
    \end{multline*}
    where $C(t)$ is explicit, and depends on the a priori bounds on $u_\ep$. In particular, if $(u_{0,\ep})_\ep$ is compact in $L^1(\T^d)$, so is $(u_\ep(t))_\ep$, for all $t>0$.
\end{prop}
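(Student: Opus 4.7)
The aim is to derive a Gr\"onwall-type inequality for
$$I_h(\eps,t) := \iint_{\T^d \times \T^d} K_h(x-y)\, |u_\eps(t,x)-u_\eps(t,y)|\, dxdy,$$
of the form $\tfrac{d}{dt}I_h \le CI_h + \mathrm{err}(\eps,h)$, with $C$ depending only on the uniform-in-$\eps$ a priori bounds of Proposition~\ref{prop:instantbounds} and with $\mathrm{err}(\eps,h)/|\log h|\to 0$ after sending $\eps\to 0$ and then $h\to 0$. The strategy is that of~\cite{belgacemjabin2013}: combine a doubling of variables \`a la Kruzhkov with the commutator estimate of Lemma~\ref{ineq:CZO}.

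I would first apply the entropy identity~\eqref{eq:entropycondition}---satisfied by $u_\eps$ with the extra nonpositive contribution $-\eps\eta''(u_\eps)|\nab u_\eps|^2$ from the viscous dissipation---to the Kruzhkov entropies $\eta(s)=|s-k|$ with $k=u_\eps(t,y)$. Using the symmetry of $K_h$ in $(x,y)$, after testing against $K_h(x-y)$ and integrating by parts in $x$, the transport contributions reorganize into a single commutator, yielding
$$\frac{d}{dt} I_h(\eps,t) \le \mathcal{C}_h(t) + R_h(t) + V_h(t),$$
with
$$\mathcal{C}_h := \iint \nab_x K_h(x-y) \cdot \bigl(\nab\g \ast u_\eps(y) - \nab\g \ast u_\eps(x)\bigr)\, |u_\eps^m(x) - u_\eps^m(y)|\, dxdy.$$
The \emph{reaction} contribution satisfies $|R_h|\le CI_h$, thanks to the uniform $L^\infty$ bound on $u_\eps$ (plus, for $m<1$, the uniform lower bound $u_\eps\ge c>0$ of Proposition~\ref{prop:instantbounds}) combined with the local Lipschitz character of $s\mapsto s^m$. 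The \emph{viscous} contribution is $|V_h|\le C\eps h^{-2}$, obtained by putting both Laplacians onto $K_h$.

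The heart of the proof is the commutator estimate. Setting $a_\eps:=-\nab\g \ast u_\eps$, Proposition~\ref{prop:instantbounds} furnishes $\|\div a_\eps\|_{L^\infty} = \|u_\eps-\bar u_\eps\|_{L^\infty}\le C$ and, via Calder\'on--Zygmund theory applied to $\nab a_\eps = -\nab^2\g \ast u_\eps$, uniform $L^p$ bounds for every $1<p<\infty$. Combined with the pointwise estimate $|u_\eps^m(x)-u_\eps^m(y)|\le C\,|u_\eps(x)-u_\eps(y)|$---valid for $m\ge 1$ from the $L^\infty$ bound alone and for $m<1$ from the lower barrier---an application of Lemma~\ref{ineq:CZO} produces $\mathcal{C}_h \le C I_h + C|\log h|^{1/\bar p}$. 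Assembling the three estimates and applying Gr\"onwall's lemma gives
$$I_h(\eps,t) \le e^{Ct}\bigl(I_h(\eps,0) + |\log h|^{1/\bar p} + C\eps h^{-2}\bigr).$$
Dividing by $|\log h|$, sending $\eps\to 0$ (killing the viscous error) and then $h\to 0$ (using $\bar p > 1$) concludes the estimate with $C(t)=e^{Ct}$; the $L^1$ precompactness claim then follows from the $K_h$-based characterization recalled at the start of the subsection.

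The \emph{main obstacle} is the commutator estimate itself: Lemma~\ref{ineq:CZO} is stated for the \emph{squared} increment $|f(x)-f(y)|^2$, whereas $\mathcal{C}_h$ features the \emph{linear} increment $|u_\eps^m(x)-u_\eps^m(y)|$. Bridging this mismatch---via Cauchy--Schwarz together with the $L^\infty$ bound on $u_\eps$, or via a direct variant of the Calder\'on--Zygmund argument underlying the lemma---is the central technical hurdle, and it is precisely here that the hypothesis $u_{0,\eps}\ge c > 0$ in the case $m<1$ becomes decisive, as it makes $s\mapsto s^m$ Lipschitz on the range of $u_\eps$.
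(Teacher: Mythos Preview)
Your overall architecture is exactly that of the paper: Kruzhkov doubling, a Gr\"onwall inequality for $I_h$, the viscous term estimated via $\Delta K_h$, a reaction term absorbed by the uniform $L^\infty$ bounds (and the lower barrier when $m<1$), and the commutator handled through Lemma~\ref{ineq:CZO} together with Calder\'on--Zygmund control of $\nab^2\g\ast u_\ep$. Your identification of the lower bound $u_{0,\ep}\ge c>0$ as the place where $m<1$ enters is also correct.

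The genuine gap is precisely the one you flag yourself, and your proposed fixes do not close it. The commutator carries the \emph{linear} increment $|u_\ep^m(x)-u_\ep^m(y)|$, while Lemma~\ref{ineq:CZO} requires a \emph{squared} one; since the integrand $\nab K_h(x-y)\cdot(a(x)-a(y))$ has no sign, you cannot simply replace $|u_\ep^m(x)-u_\ep^m(y)|$ by a pointwise upper bound of the form $C|u_\ep^{m/2}(x)-u_\ep^{m/2}(y)|^2$, and a bare Cauchy--Schwarz does not produce the right structure either. The paper resolves this by a kinetic (layer-cake) decomposition: setting $\chi_\ep(t,x,\xi):=\indic_{\xi\le u_\ep(t,x)}$, one has the \emph{identity}
\[
|u_\ep(x)^m-u_\ep(y)^m|=\int_0^\infty m\xi^{m-1}\,|\chi_\ep(x,\xi)-\chi_\ep(y,\xi)|^2\,d\xi,
\]
the point being that $\chi_\ep$ is $\{0,1\}$-valued so its increment equals its own square. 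Lemma~\ref{ineq:CZO} is then applied at each level $\xi$ with $f=\chi_\ep(\cdot,\xi)$; the first output reassembles (via the same identity) into $\iint K_h|u_\ep^m(x)-u_\ep^m(y)|$, which is converted back to $I_h$ through $|a^m-b^m|\le m(a^{m-1}+b^{m-1})|a-b|$ (this is where the lower barrier enters for $m<1$), while the error term becomes $\int_0^\infty \xi^{m-1}\|\chi_\ep(\cdot,\xi)\|_{L^{p^*}}d\xi$, controlled by Markov's inequality in terms of $\|u_\ep\|_{L^q}$. This kinetic step is the missing idea in your plan.

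A minor remark: the paper bounds the viscous contribution by $C\ep h^{-2}I_h$ (hence it enters the Gr\"onwall exponent) rather than by an additive $C\ep h^{-2}$; either works after sending $\ep\to 0$ before $h\to 0$.
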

\begin{proof}
    Let $u_\ep$ be the unique solution to \eqref{eq:viscousPDE}. Define
    \begin{equation*}
        Q_\ep(t) := \iint_{\T^d\times \T^d} K_h(x-y)|u_\ep(t,x)-u_\ep(t,y)| dxdy.
    \end{equation*}
    We start from Kr\"uzhkov's method of doubling variables~\cite{MR267257} and we obtain
    \begin{multline*}
        \p_t|u_\ep(x)-u_\ep(y)| + |u_\ep(x)^{m+1}-u_\ep(y)^{m+1}| - (u_\ep(x)+u_\ep(y))|u_\ep(x)^m-u_\ep(y)^m| \\
        -\div_x (|u_\ep(x)^m-u_\ep(y)^m| \nab\g\ast u_\ep)(x) - \div_y(|u_\ep(x)^m-u_\ep(y)^m|\nab\g\ast u_\ep)(y) \\
        -\ep (\D_x+\D_y)|u_\ep(x)-u_\ep(y)|\le 0.
    \end{multline*}
    Notice that
    \begin{multline*}
        |u_\ep(x)^{m+1}-u_\ep(y)^{m+1}| - (u_\ep(x)+u_\ep(y))|u_\ep(x)^m-u_\ep(y)^m| \\
        = \sgn(u_\ep(x)-u_\ep(y)) u_\ep(x)u_\ep(y)(u_\ep(y)^{m-1}-u_\ep(x)^{m-1}).
    \end{multline*}
    Therefore, when $m<1$ the first terms yield a nonnegative contribution which can be discarded. Multiplying by $K_h$, integrating in space and perfomring integration by parts we are thus left with
    \begin{align*}
        \frac{d}{dt}Q_\ep 
        &\le -\iint_{\T^d\times\T^d} \nab K_h(x-y)\cdot \big(\nab\g\ast u_\ep(x) - \nab\g\ast u_\ep(y)\big)  |u_\ep(x)^m-u_\ep(y)^m| dxdy\\
        &+ \indic_{m>1}\iint_{\T^d\times \T^d} K_h(x-y) (u_\ep(x)+u_\ep(y)) |u_\ep(x)^m-u_\ep(y)^m|dxdy \\
        &+ 2\ep \iint_{\T^d\times\T^d} \D K_h(x-y)|u_\ep(x)-u_\ep(y)|dxdy. 
    \end{align*}
    Let us first deal with the viscous term, which can be bounded by
    \begin{equation}\label{term:viscous}
        C\frac{\ep}{h^2} \iint_{\T^d\times\T^d}K_h(x-y) |u_\ep(x)-u_\ep(y)| dxdy.
    \end{equation}
    The second term is then bounded, using $|a^m-b^m|\le m(a^{m-1}+b^{m-1})|a-b|$, by
    \begin{equation}\label{term:second}
        C_m\indic_{m>1} \|u_\ep\|_{L^\infty}^m \iint_{\T^d\times\T^d} K_h(x-y) |u_\ep(x)-u_\ep(y)|dxdy.
    \end{equation}
    In order to deal with the first term, we introduce
    \begin{equation*}
        \chi_\ep(t,x,\xi) := \indic_{\xi \le u_\ep(t,x)},
    \end{equation*}
    so that
    \begin{equation*}
        |u_\ep(x)^m-u_\ep(y)^m| = \int_0^\infty m\xi^{m-1} |\chi_\ep(t,x,\xi)-\chi_\ep(t,y,\xi)|^2d\xi.
    \end{equation*}
    Hence,
    \begin{multline*}
        -\iint_{\T^d\times\T^d} \nab K_h(x-y)\cdot \big(\nab\g\ast u_\ep(x) - \nab\g\ast u_\ep(y)\big)  |u_\ep(x)^m-u_\ep(y)^m| dxdy\\
        = -m\int_0^\infty d\xi \ \xi^{m-1}\iint_{\T^d\times\T^d} \nab K_h(x-y)\cdot \big(\nab\g\ast u_\ep(x) - \nab\g\ast u_\ep(y)\big)  |\chi_\ep(x)-\chi_\ep(y)|^2dxdy.
    \end{multline*}
    We now use Lemma \ref{ineq:CZO} to bound this by
    \begin{multline}\label{ineq:CZOtmp}
        C(\|u_\ep\|_{L^\infty} + \|\nab^{\otimes 2}\g\ast u_\ep\|_{L^p})\int_0^\infty d\xi \ \xi^{m-1} \iint_{\T^d\times \T^d}K_h(x-y) |\chi_\ep(x,\xi)-\chi_\ep(y,\xi)|^2 dxdy  \\
        + C |\log h|^{1/\bar p} \|\nab^{\otimes 2}\g\ast u_\ep\|_{L^p} \int_0^\infty d\xi \ \xi^{m-1} \|\chi_\ep(\xi)\|_{L^{p^*}_x},
    \end{multline}
    for any $1<p<\infty$, $ 1/{p}+1/{p^*} = 1$, $\bar p = \min(p,2)$, and where $C>0$ depends additionally on $m$. Moreover, since $L^{p^*}(\T^d)\subset L^1(\T^d)$, we have
    \begin{align}\label{ineq:chiLp}
        \int_0^\infty d\xi \ \xi^{m-1} \|\chi_\ep(\xi)\|_{L^{p^*}_x} &\le 2\int_0^\infty d\xi \ \xi^{m-1} |\{u_\ep\ge \xi\}|^{1/p^*} \nn \\
        &=2\int_0^1 d\xi\ \xi^{m-1}|\{ u_\ep\ge\xi \}|^{1/p^*} +2\int_1^\infty d\xi\ \xi^{m-1}|\{ u_\ep\ge\xi \}|^{1/p^*}.
    \end{align}
    Using Markov's inequality, we bound this by
        $C_q \|u_\ep\|_{L^q}^{1/p^*} + C_{q'}\|u_\ep\|_{L^{q'}}^{1/p^*}$,
    for some $1\le q< mp^*<q'$, and the constants $C_q,C_{q'}$ blow up when $q,q'\to mp^*$, respectively. Finally, we get that \eqref{ineq:CZOtmp} is bounded by
    \begin{multline*}
        C(\|u_\ep\|_{L^\infty} + \|\nab^{\otimes 2}\g\ast u_\ep\|_{L^p}) \iint_{\T^d\times \T^d}K_h(x-y) |u_\ep(x)^m-u_\ep(y)^m| dxdy  \\
        + C_{q,q',p,m} |\log h|^{1/\bar p} \|\nab^{\otimes 2}\g\ast u_\ep\|_{L^p} \|u_\ep\|_{L^q\cap L^{q'}}^{1/p^*}.
    \end{multline*}
    We then conclude using $|a^m-b^m|\le m(a^{m-1}+b^{m-1})|a-b|$ and collecting \eqref{term:viscous} and \eqref{term:second} that
    \begin{multline*}
        \frac{d}{dt}Q_\ep \le C(\|u_\ep\|_{L^\infty} +  \|\nab^{\otimes 2}\g\ast u_\ep\|_{L^p}) (\|u_\ep\|_{L^\infty}^{m-1}\indic_{m>1} + (\inf u_\ep)^{m-1}\indic_{m<1}) Q_\ep \\
        + C_{q,q',p,m} |\log h|^{1/\bar p} \|\nab^{\otimes 2}\g\ast u_\ep\|_{L^p} \|u_\ep\|_{L^q\cap L^{q'}}^{1/p^*} + C_m \indic_{m>1} \|u_\ep\|_{L^\infty}^m Q_\ep + C\frac{\ep}{h^2} Q_\ep.
    \end{multline*}
    Using Gr\"onwall's lemma gives
    \begin{multline*}
        Q_\ep(t) \le \bigg(Q_\ep(0) + C_{q,q',p,m} |\log h|^{1/\bar p} \int_0^t \|\nab^{\otimes 2}\g\ast u_\ep\|_{L^p} \|u_\ep\|_{L^q\cap L^{q'}}^{1/p^*}dt\bigg) \\
        \times \exp\int_0^t \bigg(C(\|u_\ep\|_{L^\infty} +  \|\nab^{\otimes 2}\g\ast u_\ep\|_{L^p}) (\|u_\ep\|_{L^\infty}^{m-1}\indic_{m>1} + (\inf u_\ep)^{m-1}\indic_{m<1}) +C_m \indic_{m>1} \|u_\ep\|_{L^\infty}^m  + C\frac{\ep}{h^2}   \bigg)  dt.
    \end{multline*}
    Calderón–Zygmund theory shows that $\|\nab^{\otimes 2}\g\ast u_\ep\|_{L^p}$ can be bounded by $\|u_{\eps}\|_{L^{p}}$ for $1<p<+\infty$. Dividing by $\log h$ and usign estimates from Proposition~\ref{prop:instantbounds} yields the result.
\end{proof}

With strong compactness at hand, we can now prove the existence of a weak solution, that is Theorem~\ref{thm:existence}.

\begin{proof}[Proof of Theorem~\ref{thm:existence}]

Let $T>0$. Consider $\varphi \in C_c^\infty([0,T)\times\T^d)$. We have, 
\begin{multline*}
    -\int_0^T\int_{\T^d} \p_t\varphi(t,x) u_\ep(t,x) dxdt - \int_{\T^d} \varphi_0(x)u_{0,\ep}(x)dx \\
    + \int_0^T\int_{\T^d} \nab\varphi(t,x)\cdot\nab\g\ast u_\ep(t,x) u_\ep(t,x)^m dxdt 
    = \ep \int_0^T\int_{\T^d} \D\varphi(t,x) u_\ep(t,x)dxdt.
\end{multline*}
We now collect the uniform bounds obtained regarding the solution $u_\ep$:
\begin{enumerate}[label=(\roman*).]
    \item For all $1\le p \le \infty$, there exists a constant $C>0$ such that for any $t>0$, $\|u_\ep(t)\|_{L^p} \le C$,
    \item If $m<1$, there exists $c>0$ such that for all $t\ge 0$, $\min u_\ep(t)\ge c>0$.
\end{enumerate}
Passing to the limit in linear terms follows by weak compactness, induced by the $L^{p}$ bounds and the Banach-Alaoglu theorem. The diffusion term converges to 0 as $u_{\eps}$ is uniformly bounded in $L^{1}((0,T)\times\T^d)$.
Proposition \ref{prop:compactness} implies that, for a.e. $t>0$, there exists $u(t) \in  L^\infty$ such that $u_\ep(t)\to u(t)$, strongly in $L^1$. Since $u(t)$ is bounded for a.e. $t>0$, this convergence holds for all $L^p$, $1\le p<\infty$ by dominated convergence theorem. We also note that when $m<1$, it holds for a.e. $t>0$,
\begin{equation*}
    \int_{\T^d} |u_\ep(t)^m-u(t)^m|dx \le 2mc^{m-1} \|u_\ep(t)-u(t)\|_{L^1}\xrightarrow[\ep\to 0]{}0.
\end{equation*}
From this, we deduce $u_\ep(t)^m\to u(t)^m$ strongly in any $L^p$, $1\le p < \infty$, for a.e. $t>0$.
Finally, $\nab\g\ast u_\ep(t) \xrightarrow[\ep\to 0]{}\nab\g\ast u(t)$ strongly in any $L^p$, $1\le p\le \infty$.  By dominated convergence, we can pass to the limit in the nonlinear term since
$\nab\g\ast u_\ep(t)u_\ep(t)^m \xrightarrow[\ep\to 0]{} \nab\g\ast u(t) u(t)^m$ strongly in $L^p$, $1\le p<\infty$, for a.e. $t>0$. One obtains
\begin{multline*}
    -\int_0^T\int_{\T^d} \p_t\varphi(t,x) u(t,x) dxdt - \int_{\T^d} \varphi_0(x)u_{0}(x)dx \\
    + \int_0^T\int_{\T^d} \nab\varphi(t,x)\cdot\nab\g\ast u(t,x) u(t,x)^m dxdt=0.
\end{multline*}
This equation remains valid by density for all $\varphi \in C_c^{1}([0,T)\times\T^d)$.
Finally, for any $\eta\in C^2$ convex and $q'(\xi) := m\xi^{m-1} \eta'(\xi)$, one has
\begin{align*}
    \p_t \eta(u_\ep) &= \eta'(u_\ep) \div(u_\ep^m\nab\g\ast u_\ep) + \ep \eta'(u_\ep) \D u_\ep \\
    &= q'(u_\ep) \nab u_\ep\cdot\nab\g\ast u_\ep + \eta'(u_\ep) u_\ep^m \D\g\ast u_\ep  + \ep \eta'(u_\ep) \D u_\ep .
\end{align*}
Using that $\D \eta(u_\ep ) = \eta ''(u_\ep) |\nab u_\ep|^2 + \eta'(u_\ep) \D \eta (u_\ep)$ and \eqref{eq:green_kernel}, one obtains
\begin{align*}
     \p_t \eta(u_\ep) \le \div (q(u_\ep)\nab\g\ast u_\ep) + (q(u_\ep) - \eta'(u_\ep) u_\ep^m ) (u_\ep-\bar u_\ep) + \ep \D \eta (u_\ep).
\end{align*}
Sending again $\ep\to 0$ using our strong convergence result concludes the proof.
\end{proof}

\subsection{Weak-strong uniqueness}

The uniqueness of non-radial entropy solutions to \eqref{eq:PDE} is an open question in dimension $d\ge 2$. In this subsection, we provide a weak-strong uniqueness result: provided that there exists an entropy solution $u$ satisfying $\nab u\in L^1([0,T], L^{d,1}(\T^d))$, it is the unique entropy solution to \eqref{eq:PDE} on $[0,T]$. Note that this regularity implies that the solution is continuous, a property that shall be propagated only for short times. Nevertheless, this weak-strong uniqueness principle is a minimal requirement to our problem as far as uniqueness of entropy solutions remains open.

Let us now prove Theorem~\ref{thm:weak_strong_uniqueness}. We first record the following lemma, resulting from Young's inequality for convolutions on Lorentz spaces (also known as O'Neil inequality~\cite{MR146673}):

\begin{lemma}\label{lem:O'Neil}
There exists $C>0$ such that for all $u,v \in L^{1}(\T^d)$ with $\nabla \g\ast u$, $\nabla \g\ast v \in L^{\frac{d}{d-1},\infty}$, we have
 $$
 \|\nabla \g\ast (u-v)\|_{L^{\frac{d}{d-1},\infty}}\le C\|u-v\|_{L^{1}}.
 $$
\end{lemma}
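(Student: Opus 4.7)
The plan is simply to apply Young's convolution inequality in Lorentz spaces (O'Neil) with the integrable factor $u-v\in L^{1}(\T^d)$ and the weakly singular kernel $\nabla\g$. The form of O'Neil's inequality I will invoke is: for $1<p<\infty$, if $f\in L^{1}(\T^d)$ and $g\in L^{p,\infty}(\T^d)$, then
$$
\|f\ast g\|_{L^{p,\infty}} \le C\,\|f\|_{L^{1}}\,\|g\|_{L^{p,\infty}},
$$
with $C$ depending only on $p$ (and the domain). Applied to $p=d/(d-1)$, $f=u-v$, $g=\nabla\g$, this immediately gives
$$
\|\nabla\g\ast(u-v)\|_{L^{d/(d-1),\infty}} \le C\,\|\nabla\g\|_{L^{d/(d-1),\infty}}\,\|u-v\|_{L^{1}},
$$
so the whole content of the lemma reduces to the uniform statement $\nabla\g\in L^{d/(d-1),\infty}(\T^d)$.

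To check this membership, I would use the standard decomposition of the Green kernel on the torus. Near the origin, $\g$ has the same singular profile as the Euclidean kernel of $-\Delta$: for $d\ge 3$, $\g(x)=c_{d}|x|^{2-d}+R(x)$; for $d=2$, $\g(x)=-\tfrac{1}{2\pi}\log|x|+R(x)$; and for $d=1$ the kernel is piecewise polynomial. In all cases $R$ is smooth on $\T^d$, so $|\nabla\g(x)|\le C\,|x|^{1-d}$ near $0$ and $\nabla\g$ is bounded away from $0$. A direct computation of the distribution function gives
$$
\bigl|\{x\in\T^d:|x|^{1-d}>\lambda\}\bigr|\lesssim \lambda^{-d/(d-1)},
$$
showing that the singular profile belongs to $L^{d/(d-1),\infty}(\T^d)$, with a norm bounded by a purely dimensional constant.

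The only mildly delicate point is to justify the use of O'Neil on the torus with a kernel that is only weakly integrable. I would handle this by splitting $\nabla\g=\nabla\g\,\indic_{|x|<1/2}+\nabla\g\,\indic_{|x|\ge 1/2}$. The smooth bounded part contributes $C\|u-v\|_{L^{1}}$ directly by the usual Young inequality $\|f\ast g\|_{L^{p,\infty}}\le\|f\|_{L^{1}}\|g\|_{L^{\infty}}$, followed by the embedding $L^{\infty}\subset L^{p,\infty}$ on a bounded domain. For the singular part, supported in a small ball, one either cites a periodic version of O'Neil directly or lifts the computation to $\R^d$ via a smooth compactly supported extension and invokes the classical O'Neil estimate there. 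Summing both pieces yields the stated inequality with a constant depending only on $d$.
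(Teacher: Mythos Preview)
Your proposal is correct and takes essentially the same approach as the paper: both invoke O'Neil's Young inequality in Lorentz spaces with the $L^1$ factor $u-v$ and the kernel $\nabla\g\in L^{d/(d-1),\infty}$. The paper simply records the lemma as a direct consequence of O'Neil without spelling out the verification that $\nabla\g\in L^{d/(d-1),\infty}(\T^d)$ or the adaptation to the torus, so your write-up is in fact more detailed than the original.
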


\begin{proof}[Proof of Theorem~\ref{thm:weak_strong_uniqueness}] 

We let $u,v$ be two entropy solutions of~\eqref{eq:PDE} with initial condition $u_0$, $v_0$, where $v$ has higher regularity assumed in Theorem~\ref{thm:weak_strong_uniqueness}. Taking the differences of the equation on $u$ and $v$ and multiplying by $\sgn(u-v)$ (this can be made rigorous thanks to Kr\"uzhkov's argument): 
\begin{align*}
    \p_t |u- v| &\le \div(|u^m-v^m|\nab\g\ast v)  
    +\sgn (u-v)\nab u^m\cdot\nab\g\ast (u-v) - u^m |u-v|.
\end{align*}
Integrating in space, we obtain
\begin{align*}
    \frac{d}{dt}\| u-v\|_{L^1} &\le \int_{\T^d} |\nab u^m||\nab\g\ast (u-v)|dx \\
    &\le \|\nab u^m\|_{L^{d,1}}\|\nab\g\ast (u-v)\|_{L^{\frac{d}{d-1},\infty}} \\
    &\le  C\|\nab u^m\|_{L^{d,1}}\| u-v\|_{L^1}.
\end{align*}
We classically conclude by using Gr\"onwall's lemma.

\end{proof}

\subsection{Local well-posedness}

We end this section by showing that the problem \eqref{eq:PDE} is locally well-posed, assuming that the initial condition is Lipschitz continuous. 

\begin{proof}[Proof of Proposition~\ref{prop:strong_solution}]
We work in the approximated scheme and then send $\varepsilon\to 0$. We consider $u_{0,\ep}\ge c>0$, but this condition can be removed if $m\ge2$ as it will be clear from the proof.  Let $u_\ep$ be the unique solution to \eqref{eq:viscousPDE} with initial datum $u_{0,\ep}$. We have
    \begin{equation*}
        \p_t u_\ep = mu_\ep^{m-1}\nab u_\ep\cdot\nab\g \ast u_\ep - u_\ep^m(u_\ep - \bar u_\ep) + \ep \D u_\ep.
    \end{equation*}
    Thus,
    \begin{align*}
        \p_t \nab u_\ep &= m\nab u_\ep^{m-1}\nab u_\ep\cdot\nab\g \ast u_\ep + mu_\ep^{m-1} \nab^{\otimes 2}u_\ep \cdot\nab\g\ast u_\ep \\
        &+mu_\ep^{m-1}\nab u_\ep\cdot\nab^{\otimes 2}\g\ast u_\ep - (m+1) u_\ep^m \nab u_\ep + m\bar u_\ep u_\ep^{m-1}\nab u_\ep   + \ep \D \nab u_\ep.
    \end{align*}
    Multiplying the equation by $\displaystyle p|\nab u_\ep|^{p-1}\frac{\nab u_\ep}{|\nab u_\ep|}$, one obtains
    \begin{align}\label{dtdxu}
         \p_t |\nab u_\ep |^p &= mp|\nab u_\ep|^{p-1}\frac{\nab u_\ep\cdot \nab u_\ep^{m-1}}{|\nab u_\ep|} \nab u_\ep\cdot\nab \g\ast u_\ep + mp |\nab u_\ep|^{p-1} u_\ep^{m-1} \nab |\nab u_\ep|\cdot \nab\g\ast u_\ep \nn \\
        & + mp|\nab u_\ep|^{p-1} u_\ep^{m-1} \nab^{\otimes 2}\g\ast u_\ep : \frac{\nab u_\ep \otimes \nab u_\ep}{|\nab u_\ep|} - (m+1)pu_\ep^m|\nab u_\ep|^{p} \nn \\
        & + mp\bar u_\ep u_\ep^{m-1} |\nab u_\ep |^p + \ep p|\nab u_\ep|^{p-1}\frac{\nab u_\ep}{|\nab u_\ep |}\cdot \nab \D u_\ep .
    \end{align}
    Notice that the last term, once integrated, gives
    \begin{align*}
    &\ep p \int_{\T^d} |\nab u_\ep|^{p-1}\frac{\nab u_\ep}{|\nab u_\ep |}\cdot \nab \D u_\ep dx \\
    &= -\ep p \int_{\T^d}|\nab u_\ep|^{p-2} |\D u_\ep|^2 dx - \ep p(p-2) \int_{\T^d} |\nab u_\ep|^{p-4} \nab u_\ep\otimes \nab u_\ep : \nab^{\otimes 2}u_\ep   \D u_\ep dx \\
    &=  -\ep p \int_{\T^d}|\nab u_\ep|^{p-2} |\D u_\ep|^2 dx - \ep p(p-2) \int_{\T^d} |\nab u_\ep|^{p-3} \nab u_\ep\cdot \nab |\nab u_\ep|   \D u_\ep dx\\
    &=  -\ep p \int_{\T^d}|\nab u_\ep|^{p-2} |\D u_\ep|^2 dx - \ep  \int_{\T^d}\nab u_\ep\cdot \nab |\nab u_\ep|^{p-2}   \D u_\ep dx \\
    &=  -\ep (p-1) \int_{\T^d}|\nab u_\ep|^{p-2} |\D u_\ep|^2 dx  +\ep \int_{\T^d} |\nab u_\ep|^{p-2}\nab u_\ep \cdot \nab \D u_\ep dx.
    \end{align*}
    Therefore, the viscous term in \eqref{dtdxu} reads, once integrated,
    \begin{equation*}
        -\ep p  \int_{\T^d}|\nab u_\ep|^{p-2} |\D u_\ep|^2 dx \le 0.
    \end{equation*} 
    Integrating in space and by parts thus gives
    \begin{align*}
        &\frac{d}{dt}\|\nab u_\ep \|_{L^p}^p \\
        &\le mp\int_{\T^d} |\nab u_\ep|^{p-2}\nab u_\ep\cdot \nab u_\ep^{m-1} \nab u_\ep\cdot \nab\g\ast u_\ep dx  -m\int_{\T^d} \nab u^{m-1}_\ep\cdot\nab\g\ast u_\ep |\nab u_\ep|^p dx \\
        &+ m\int_{\T^d} u_\ep^{m-1}(u_\ep-\bar u_\ep) |\nab u_\ep|^p dx + mp \int_{\T^d} |\nab u_\ep|^{p-2} u_\ep^{m-1}\nab^{\otimes 2}\g\ast u_\ep : \nab u_\ep\otimes \nab u_\ep dx \\
        &-mp\int_{\T^d} u_\ep^{m-1}(u_\ep - \bar u_\ep) |\nab u_\ep|^p dx - p \int_{\T^d} u_\ep^m |\nab u_\ep|^p dx \\
        &=: I + II+ III+ IV + V + VI.
    \end{align*}
    We then combine and bound the differents terms above as follows:
    \begin{align}\label{ineq:I-VI}
        I +II &= m(p-1) \int_{\T^d} |\nab u_\ep|^p \nab u_\ep^{m-1}\cdot\nab \g\ast u_\ep dx \\ &\le m(p-1) \|\nab u_\ep^{m-1}\|_{L^\infty} \|\nab \g\ast u_\ep\|_{L^\infty} \|\nab u_\ep\|_{L^p}^p \nonumber \\
        III + V+ VI &= (m-mp-p)\int_{\T^d} u_\ep^{m-1}(u_\ep - \bar u_\ep) |\nab u_\ep|^p dx - p\bar u_\ep \int_{\T^d} u_\ep^{m-1} |\nab u_\ep|^p dx \nonumber\\
        IV  &\le mp \|\nab^{\otimes 2}\g\ast u_\ep\|_{L^\infty} \| u_\ep^{m-1}\|_{L^\infty} \|\nab u_\ep\|_{L^p}^p.\nonumber
    \end{align}
    Inserting these estimates in \eqref{dtdxu} and using that the $L^p$ norms decrease along the flow and $\nab \g \in L^1(\T^d)$,
    \begin{align*}
        \frac{d}{dt}\| \nab u_\ep \|_{L^p}^p &\le (p-1) C_{m,d} (c^{m-2}+\|u_{0,\ep}\|_{L^\infty}^{m-2})\|u_{0,\ep}\|_{L^\infty} \| \nab u_\ep \|_{L^\infty} \|\nab u_\ep\|_{L^p}^p \\
        & + pC_{m} (\|u_{0,\ep}\|_{L^\infty}^{m-1}+c^{m-1}) \| u_{0,\ep}\|_{L^\infty} \|\nab u_\ep\|_{L^p}^p   \\
        & + pC_{m,d} (c^{m-1}+\| u_{0,\ep}\|_{L^\infty}^{m-1}) \|\nab u_\ep\|_{L^\infty} \|\nab u_\ep\|_{L^p}^p. 
    \end{align*}
    Therefore, 
    \begin{multline*}
        \frac{d}{dt}\log \|\nab u_\ep\|_{L^p} \le C_{m,d} \big((c^{m-2}+\|u_{0,\ep}\|_{L^\infty}^{m-2})\|u_{0,\ep}\|_{L^\infty} + c^{m-1}+\|u_{0,\ep}\|_{L^\infty}^{m-1} \big) \|\nab u_\ep\|_{L^\infty} \\
        + C_m (\| u_{0,\ep}\|_{L^\infty}^{m-1}+c^{m-1}) \| u_{0,\ep}\|_{L^\infty }.
    \end{multline*}
    Sending $p\to\infty$ above gives
    \begin{multline*}
        \frac{d}{dt}\| \nab u_\ep\|_{L^\infty}\le C_{m,d} \big((c^{m-2}+\|u_{0,\ep}\|_{L^\infty}^{m-2})\|u_{0,\ep}\|_{L^\infty} + c^{m-1}+\|u_{0,\ep}\|_{L^\infty}^{m-1} \big) \|\nab u_\ep\|_{L^\infty}^2 \\
        + C_m (\| u_{0,\ep}\|_{L^\infty}^{m-1}+c^{m-1}) \| u_{0,\ep}\|_{L^\infty }\|\nab u_\ep\|_{L^\infty}.
    \end{multline*}
Using Gr\"onwall's lemma, we obtain an estimate on $\|\nabla u_{\eps}(t)\|_{L^{\infty}}$. Sending $\varepsilon\to 0$ then yields the result by lower semi-continuity of the norm. 
\end{proof}

\begin{remark}
Taking a closer look at the proof, we find that $T_*$ can be estimated from below, and $\|\nabla u(t)\|_{L^{\infty}}$ can be estimated from above. Specifically,
 \begin{equation*}
    T_*\ge C_{m,d}^{-1} \big((c^{m-2}+\|u_{0,\ep}\|_{L^\infty}^{m-2})\|u_{0,\ep}\|_{L^\infty} + c^{m-1}+\|u_{0,\ep}\|_{L^\infty}^{m-1} \big)^{-1}\| \nab u_{0,\ep} \|_{L^\infty}^{-1}.
\end{equation*}  
\end{remark}

\begin{remark}
    Together with the weak-strong uniqueness obtained in the previous subsection, this result shows that there exists a unique Lipschitz solution to \eqref{eq:PDE} on $[0,T_*)$.
\end{remark}

\section{Properties of entropy solutions}

Since uniqueness is an open question in general, we provide several important properties that are shared among the class of entropy solutions. Of course, these properties are in particular satisfied by the solution we have constructed above. 

\subsection{Dissipation estimates}

\begin{prop}
    Let $u$ be an entropy solution to \eqref{eq:PDE}. Let  $\Phi$ such that  
    \begin{equation*}
    \begin{cases}
         \displaystyle \frac{d}{dt} \Phi_\be(t) = \Phi_\be(t)^m(\bar u_0 - \Phi_\be(t)), \\
         \Phi_\be(t=0) = \beta.
    \end{cases}
    \end{equation*} 
    Then, 
    \begin{enumerate}[label=\roman*.]
        \item $t\mapsto \text{ess sup }_{\T^d }\  u(t)$ and $t\mapsto \text{ess inf }_{\T^d }\  u(t)$ are respectively nonincreasing and nondecreasing,
        \item for a.e. $t>0$ and $x\in \T^d$, $\Phi_{\text{ess inf } u_0 }(t) \le u(t,x) \le \Phi_{\text{ess sup } u_0}(t)$,
        \item for all $1\le p\le \infty$, $t\mapsto \|u(t)\|_{L^p}$ is nonincreasing.
    \end{enumerate}
\end{prop}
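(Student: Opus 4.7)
The proof rests on the integrated form of the entropy inequality~\eqref{eq:entropycondition}. Testing against $\varphi\equiv 1$ (the divergence term integrates to zero by periodicity), we obtain, for every convex $\eta\in C^2$,
$$
\frac{d}{dt}\int_{\T^d}\eta(u)\,dx \;\le\; \int_{\T^d}\big(q(u) - \eta'(u)\,u^m\big)(u - \bar u_0)\,dx
$$
in the sense of distributions. The plan is to establish each of (i)--(iii) by a tailored choice of $\eta$; nonsmooth choices (Kr\"uzhkov-type entropies) are reached by standard $C^2$ convex smoothing.

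For (iii), take $\eta(\xi)=\xi^p$ with $1\le p<\infty$: a direct computation yields $q(\xi) - \eta'(\xi)\xi^m = -\frac{p(p-1)}{m+p-1}\xi^{m+p-1}$, and the H\"older--Jensen inequality exactly as in~\eqref{ineq:Jensen} gives $\bar u_0\int u^{m+p-1}\,dx \le \int u^{m+p}\,dx$, so the right-hand side is nonpositive and $t\mapsto\|u(t)\|_{L^p}$ is nonincreasing. The $L^\infty$ case will follow from (ii).

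For (i), use the Kr\"uzhkov entropy $\eta_c(\xi)=(\xi-c)_+$, for which $q_c(\xi)=(\xi^m-c^m)_+$ and $q_c(\xi)-\eta_c'(\xi)\xi^m = -c^m\,\indic_{\xi>c}$. Substituting,
$$
\frac{d}{dt}\int_{\T^d}(u-c)_+\,dx \;\le\; -c^m\int_{\{u>c\}}(u-\bar u_0)\,dx,
$$
whose right-hand side is $\le 0$ whenever $c\ge\bar u_0$. Hence $u_0\le c$ implies $u(t)\le c$ a.e., and applying this at each time slice gives the nonincreasingness of the essential supremum. The symmetric argument with $(c-\xi)_+$, whose flux satisfies $q_c(\xi)-\eta_c'(\xi)\xi^m = c^m\,\indic_{\xi<c}$, yields the nondecreasingness of the essential infimum (using that $c\le\bar u_0$ forces $\int_{\{u<c\}}(u-\bar u_0)\le 0$).

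For the sharp ODE bound in (ii), we repeat the preceding calculation with the time-dependent threshold $c(t)=\Phi_{\text{ess sup}\,u_0}(t)$. Rigorously, we discretize time into slabs $[t_k,t_{k+1}]$, apply the estimate with the constant value $c=\Phi(t_k)$ on each slab, and pass to the limit as the mesh vanishes. This produces
$$
\frac{d}{dt}\int_{\T^d}(u-\Phi(t))_+\,dx \;\le\; -\Phi(t)^m\int_{\{u>\Phi(t)\}}(u-\bar u_0)\,dx \;-\; \dot\Phi(t)\,\big|\{u>\Phi(t)\}\big|.
$$
The decisive step is a cancellation: splitting $\int_{\{u>\Phi\}}(u-\bar u_0)\,dx = \int(u-\Phi)_+\,dx + (\Phi-\bar u_0)|\{u>\Phi\}|$ and using $\dot\Phi=\Phi^m(\bar u_0-\Phi)$, the $|\{u>\Phi\}|$-contributions annihilate exactly, leaving
$$
\frac{d}{dt}\int_{\T^d}(u-\Phi(t))_+\,dx \;\le\; -\Phi(t)^m\int_{\T^d}(u-\Phi(t))_+\,dx.
$$
The left-hand side vanishes at $t=0$ since $u_0\le\Phi(0)$, so the quantity stays at zero and $u(t,x)\le\Phi(t)$ a.e. The lower bound by $\Phi_{\text{ess inf}\,u_0}$ follows symmetrically with $(c(t)-\xi)_+$. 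The only real technical obstacle is the combined justification of the $C^2$-smoothing of $\eta_c$ together with the time-dependence of $c(t)$; both must be carried out simultaneously so that the boundary cancellation survives the limits, but each step is standard.
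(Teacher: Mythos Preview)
Your proof is correct and follows essentially the same approach as the paper: Kr\"uzhkov entropies $(\xi-c)_\pm$ for (i)--(ii) and $\eta(\xi)=\xi^p$ together with the H\"older--Jensen inequality~\eqref{ineq:Jensen} for (iii). The only cosmetic difference is that the paper handles the time-dependent threshold $c(t)=\Phi(t)$ directly via the distributional chain rule rather than by time-discretization, arriving in one line at $\frac{d}{dt}\int(u-c)_+\le -c^m\int(u-c)_+ - (c^m(c-\bar u_0)+\dot c)\,|\{u>c\}|$ and the same cancellation you describe.
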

\begin{remark}
    The lower bound shows in particular that any entropy solution is positive for $t>0$ when $m<1$. On the other hand, there exist weak (non entropic) solutions that do not satisfy this property (see \cite{CGV22FastRegularisation}).

    More precisely, when $m<1$, we have for a.e. $t>0$ and $\T^d$, 
    \begin{equation}\label{eq:lowerbarrier}
         u(t,x) \ge \begin{cases}
            \displaystyle \bar u_0 - 2^{-1}\bar u_0 e^{-2^{-m}\bar  u_0^m t}, & t\in (\tau_{1/2},\infty),\\
            \displaystyle 2^{-1/(1-m)}(\bar u_0 t)^{1/(1-m)} ,& t\in [0,\tau_{1/2}],
        \end{cases}
    \end{equation}
    where $\tau_{1/2}$ is defined in Lemma \ref{eq:ODE}. Furthermore, the short-time scaling $t^{1/(1-m)}$ is that of self-similar solutions to \eqref{eq:PDE}, as given in \cite{CGV22FastRegularisation} for the Euclidean setting.
\end{remark}
\begin{proof}
\textbf{Weak maximum principle.} Let us prove that, if $u_0\le c$ a.e. on $\T^d$ for some $c\ge 0$, then $u(t,x)\le c$ for a.e. $t>0$ and $x\in \T^d$. We have in the sense of distributions
\begin{align*}
    \p_t (u-c)_+ &\le \div((u^m-c^m)_+\nab\g\ast u) -c^m\indic_{u>c } (u-\bar u).
\end{align*}
Thus, in the weak sense on $\R_+$:
\begin{align*}
    \frac{d}{dt}\int_{\T^d}(u(t)-c)_+dx \le -c^m \int_{\T^d} (u(t)-c)_+dx -c^m(c-\bar u_0) |\{u(t)>c\}|.
\end{align*}
Take $c\ge 0$ such that $u_0 \le c$ a.e. on $\T^d$, which implies that
\begin{equation*}
    \int_{\T^d}(u(t)-c)_+dx  \le \int_{\T^d} (u_0-c)_+dx =0.
\end{equation*}
We thus have $u(t,x) \le c$ for a.e. $t>0$ and $x\in \T^d$. The same argument gives that, if $u_0\ge c$ a.e. on $\T^d$ for some $c\ge 0$, then $u\ge c$ a.e. on $\R_+\times \T^d$. This implies that $t\mapsto \text{ess sup }_{\T^d }\  u(t)$ and $t\mapsto \text{ess inf }_{\T^d }\  u(t)$ are respectively nonincreasing and nondecreasing.

\bigbreak
\textbf{Estimate on the maximum/minimum.}
    Consider a differentiable function of time $c:\R_+\to \R$. Since $u$ is an entropy solution to \eqref{eq:PDE}, we have in the sense of distributions 
    \begin{align*}
        \p_t (u-c)_+ &\le \div((u^m-c^m)_+\nab\g\ast u)  -c^m\indic_{u>c } (u-\bar u) -\dot c \indic_{u>c}.
    \end{align*}
    Integrating in space gives
    \begin{align*}
        \frac{d}{dt}\int_{\T^d} (u-c)_+dx &\le -c^m \int_{\T^d} (u-c)_+ dx - (c^m (c-\bar u) + \dot c) |\{u>c\}|.
    \end{align*}
    As long as we take $c\equiv c(t)$ such that
    \begin{equation*}
        \dot c + c^m(c-\bar u) \ge 0,
    \end{equation*}
    we obtain $\int_{\T^d} (u-c)_+(t,x)dx \le \int_{\T^d} (u_0-c_0)_+ dx$. Take $c_0>0$ such that $u_0 \le c_0$ a.e. on $\T^d$ and we obtain that 
    \begin{equation*}
        u(t,x)\le \Phi_{\|u_0\|_{L^\infty}}(t),
    \end{equation*}
    for a.e. $t>0$ and $x\in \T^d$. In the same spirit, as long as
    As long as 
    \begin{equation*}
        \dot c + c^m(c-\bar u_0) \le 0,
    \end{equation*}
    we obtain $\int_{\T^d} (c-u)_+(t,x)dx \le \int_{\T^d} (c_0-u_0)_+ dx$. This implies
    \begin{equation*}
        u(t,x)\ge \Phi_{\text{ess inf }u_0}(t),
    \end{equation*}
    for a.e. $t>0$ and $x\in \T^d$.

    \bigbreak
    \textbf{Decreasing of $L^p$ norms.}
    We have for any $p > 1$,
    \begin{equation*}
        \p_t u^p \le \frac{mp}{p+m-1}\div (u^{p+m-1}\nab\g\ast u) -\frac{p(p-1)}{p+m-1} u^{m+p-1}(u-\bar u_0).
    \end{equation*}
    Using the same argument based on Jensen's inequality as in \eqref{ineq:Jensen}, we obtain that the $L^p$ norms are nonincreasing for any $1\le p\le \infty$.

\end{proof}

\subsection{Asymptotic behaviour}

It is expected that the solution converges towards its spatial average on the torus. We prove that the rate of this convergence is exponential in time when $m<1$ for the energy metric $\dot H^{-1}$ and any $L^p$ topologies, $1\le p\le \infty$. When $m\ge 1$, we assume that the initial condition is bounded from below to obtain such an exponential rate of convergence in the energy and $L^\infty$ metrics.

\begin{proof}[Proof of Theorem~\ref{thm:asymptotic_behaviour}]
    Let us first consider the case $m<1$. We recall that for all $t$, $\bar{u}(t) = \bar{u}_{0}$. Notice that 
    \begin{align*}
        \| u(t) - \bar{u}_{0}\|_{L^\infty} &\le  \|u(t)\|_{L^\infty}- \text{ess inf } u(t) \\
        &\le \Phi_{\text{ess sup } u_0}(t) - \Phi_{\text{ess inf } u_{0}}(t).
    \end{align*}
    Using Lemma \ref{eq:ODE} gives, for $t\ge \tau_{1/2}$,
    \begin{equation*}
        \| u(t) - \bar{u}_{0}\|_{L^\infty}\le (\|u_{0}\|_{L^\infty} - \bar{u}_{0}) e^{-\bar{u}_{0}^m t} + \hal \bar ue^{-2^{-m}\bar{u}_{0}^m t}.
    \end{equation*}
    Since $\g$ has mean zero, we have
    \begin{align*}
        \frac{d}{dt}\hal\int_{\T^d} \g\ast (u(t)-\bar{u}_{0}) (u(t)-\bar{u}_{0}) dx &=-\int_{\T^d}|\nab\g\ast (u(t)-\bar{u}_{0}) |^2u^m(t) dx \\
        &\le -\Phi_{\min u_{0}}(t)^m\int_{\T^d}|\nab\g\ast (u(t)-\bar{u}_{0})|^2dx.
    \end{align*}
    Integrating by parts and reminding that $\int_{\T^d}\g\ast ffdx \sim \|f\|_{\dot H^{-1}}^2$, one obtains
    \begin{equation*}
        \| u(t)-\bar{u}_{0}\|_{\dot H^{-1}}\le \| u_{0}-\bar{u}_{0}\|_{\dot H^{-1}} e^{-\int_0^t \Phi_{\min u_{0}}^m(\tau)d\tau},
    \end{equation*}
    and there is $C_m >0$ such that
    \begin{align*}
        \int_0^t \Phi_{\min u_{0}}^m(\tau)d\tau \ge C_m \bar{u}_{0}^m t.
    \end{align*}
    We now suppose $m>1$ and $u_{0}\ge c>0$. Therefore, one obtains as before
    \begin{align*}
        \| u(t)-\bar{u}_{0}\|_{L^\infty} &\le \Phi_{\max u_{0}}(t) - \Phi_{c}(t) \\
        &\le (\|u_{0}\|_{L^\infty} - \bar{u}_{0}) e^{-\bar{u}_{0}^m t}  + (\bar{u}_{0}- c) e^{-c^mt}.
    \end{align*}
    Moreover,
    \begin{align*}
        \frac{d}{dt}\hal\int_{\T^d} \g\ast (u(t)-\bar{u}_{0}) (u(t)-\bar{u}_{0}) dx &= -\int_{\T^d}|\nab\g\ast (u(t)-\bar{u}_{0})|^2 u^m(t) dx  \\
        &\le -c^m\int_{\T^d}|\nab\g\ast (u(t)-\bar{u}_{0}) |^2dx.
    \end{align*}
    We obtain
    \begin{equation*}
        \|u(t) -\bar{u}_{0}\|_{\dot H^{-1}} \le \|u_{0} - \bar{u}_{0}\|_{\dot H^{-1}} e^{-c^mt}.
    \end{equation*}
Let us finally study the convergence in $L^p$ norms, $1\le p<\infty$. Since $u$ is an entropy solution, we have 
\begin{align*}
    \frac{d}{dt}\int_{\T^d} |u(t)-\bar u_0| dx &\le \int_{\T^d} (|u^m-\bar u_0^m|-\sgn (u-\bar u_0) u^m) (u-\bar u_0) dx\\
    &= -\bar u_0^m \int_{\T^d} |u(t)-\bar u_0| dx.
\end{align*}
This gives for a.e. $t>0$,
\begin{equation*}
    \|u(t)-\bar u_0\|_{L^1}\le \|u_0-\bar u_0\|_{L^1} e^{-\bar u_0^m t}.
\end{equation*}
By interpolation, we have for all $1\le p<\infty$ and a.e. $t>0$,
\begin{align*}
   \|u(t)-\bar u_0\|_{L^p}&\le \|u(t)-\bar u_0\|_{L^\infty}^{1-\frac{1}{p}} \|u(t)-\bar u_0\|_{L^1}^\frac{1}{p} \\
    &\le (\bar u_0+ (mt)^{-1/m})^{1-\frac{1}{p}}  e^{-\frac{1}{p}\bar u_0^m t }.
\end{align*}
\end{proof}

\section{Estimates for the size of the support}\label{sec:support}

In this section, we quantify the evolution of the size of the support of solutions. This has been previously obtained in \cite{CGV2022vortex, CGV22FastRegularisation} for the one-dimensional (or equivalently radial multidimensional) case, where one can use the classical link between scalar conservation laws and the Hamilton-Jacobi equations. 

In our multidimensional (non-radial) setting, we consider the antiderivative of the decreasing rearrangement of solutions to \eqref{eq:PDE}. This quantity is the equivalent of the Hamilton-Jacobi formulation for one-dimensional scalar conservation laws. Nevertheless, we shall see in the following that, contrary to the radial setting, this quantity might be a mere subsolution to the associated Hamilton-Jacobi equation.

We note that the estimate \eqref{eq:lowerbarrier} already shows that any entropy solution to \eqref{eq:PDE} instantaneously fills the whole domain $\T^d$ when $m<1$. Therefore, in this section, we restrict ourselves to the case $m\ge 1$.

Let us introduce some standard notations. For a measurable set $E\subset\R^d$, $|E|$ denotes its Lebesgue measure. For $f:\T^d\to\R$ measurable, set $\{f>s\}:=\{x\in\T^d:\ f(x)>s\}$. The decreasing rearrangement $f^*:[0,|\T^d|]\to\R$ is defined by
$$
 f^{*}(s):=
 \begin{cases}
 \inf\{\tau\in\R:\ |\{f>\tau\}|\le s\}, & 0\le s<|\T^d|,\\
 \operatorname*{ess\,inf}_{\T^d} f, & s=|\T^d|.
 \end{cases}
$$
For a space-time function $u\equiv u(t,x)$, we write $u_*(t,s) \equiv u(t)_*(s)$. Since we assume $|\T^d|=1$, the volume of the domain will be transparent in the sequel.

\subsection{Decreasing rearrangement and the Hamilton-Jacobi equation}\label{subsec:decreasing_rearrangement}

Let us formally consider a solution $u$ to \eqref{eq:PDE}. Define
\begin{equation*}
    k(t,s ) := \int_0^s u_*(t,\sigma ) d\sigma.
\end{equation*}
Formally, $k$ solves the following equation:
\begin{equation}\label{eq:primitive}
    \begin{cases}
    \p_t k + (\p_s k)^m_+ (k - s\bar u_0) = 0, & (t,s)\in (0,\infty)\times (0,1),\\
    k\vert_{s=0} = 0,\quad  
    k\vert_{s=1} = \bar u_0, \\
    k\vert_{t=0} = k_0.
\end{cases}    
\end{equation}
As we shall see in the following, it is not obvious that, given an entropy solution $u$ to \eqref{eq:PDE}, the integral quantity $k$ actually satisfies \eqref{eq:primitive}. In fact, when $m>1$, shocks appear in a finite time. This may lead to a dissipation mechanism such that $k$ is actually just a subsolution. This is a stricking difference with the radial case, where the integral of $u$ is a solution to the equation \eqref{eq:primitive}. 

The equation \eqref{eq:primitive} satisfying a comparison principle, we shall use the framework of viscosity solutions. 

\begin{mydef}\label{def:viscosity}
    Define the subdifferential (resp. superdifferential) of a function $f:(0,\infty)\times (0,1)\to \R$ by
    \begin{align*}
        D^{-}f(t,s) := \big\{ p\in \R^2 : \liminf_{(\tau,\sigma)\to (t,s)} \frac{f(\tau,\sigma)-f(t,s)-p_1(\tau -t) -p_2 (\sigma- s)}{|\tau -t| + |\sigma-s|} \ge 0\big\}, \\
        D^{+}f(t,s) := \big\{ p\in \R^2 : \limsup_{(\tau,\sigma)\to (t,s)} \frac{f(\tau,\sigma)-f(t,s)-p_1(\tau -t) -p_2 (\sigma- s)}{|\tau -t| + |\sigma-s|}  \le 0\big\} .
    \end{align*}
    We say that $k$ is a viscosity subsolution to \eqref{eq:primitive} if it is upper semicontinuous and, given any $t>0$, $s\in (0,1)$, and $(p_1,p_2)\in D^{+}k(t,s)$, we have 
    \begin{equation*}
        p_1 + (p_2)_+^m (k(t,s)-s\bar u_0) \le 0, \quad k(0,s)\le k_0(s), \quad k(t,0)\le 0,\quad k(t,1) \le \bar u_0.
    \end{equation*}
    We say that $k$ is a viscosity supersolution to \eqref{eq:primitive} if it is lower semicontinuous and, given any $t>0$, $s\in (0,1)$, and $(p_1,p_2)\in D^{-}k(t,s)$, we have 
    \begin{equation*}
        p_1 + (p_2)_+^m (k(t,s)-s\bar u_0) \ge 0, \quad k(0,s)\ge k_0(s), \quad k(t,0)\ge 0,\quad k(t,1) \ge \bar u_0.
    \end{equation*}
    We say that $k\in C^0((0,\infty)\times (0,1))$ is a viscosity solution to \eqref{eq:primitive} if it is both a viscosity supersolution and subsolution.
\end{mydef}

It is a rather classical result that \eqref{eq:primitive} has a unique viscosity solution, and that viscosity solutions satisfy a comparison principle (this is a straightforward adaptation from \cite[Theorems 4.3, 4.4]{CGV2022vortex}). 

Given the entropy solution $u$ constructed in Section 1, we recall the definition
\begin{equation*}
    k(t,s) := \int_0^s u_*(t,\sigma)d\sigma.
\end{equation*}
Our aim is to show that $k$ satisfies a comparison principle with any viscosity supersolution (see Corollary \ref{prop:comparisonprinciple}). Before this, we state the following lemma.
\begin{lemma}\label{lem:subsolution}
    Let $u_\ep$ be the compact sequence of section \ref{subsec:compactness}, and $u$ be the entropy solution constructed from it. Define the quantities
    \begin{equation*}
        k_\ep(t,s) := \int_0^s (u_\ep)_*(t,\sigma)d\sigma, \quad k(t,s) := \int_0^s u_*(t,\sigma)d\sigma.
    \end{equation*}
    Then, $k_\ep$ is a classical subsolution to \eqref{eq:primitive}. Moreover, $k_\ep \to k$ pointwise on $\R_+\times (0,1)$ and satisfies the one-sided Lipschitz estimate
    \begin{equation*}
        \p_t k_\ep \le C(\| u_0\|_{L^\infty}).
    \end{equation*}
\end{lemma}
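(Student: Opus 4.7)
The plan is to establish the three assertions in sequence; the crux is the subsolution property. For it, I will work with the truncation $P(t,\tau) := \int_{\T^d}(u_\eps(t) - \tau)_+\,dx = M_\eps(t,\tau) - \tau\mu_\eps(t,\tau)$, where $\mu_\eps(t,\tau) := |\{u_\eps(t) > \tau\}|$ and $M_\eps(t,\tau) := \int_{\{u_\eps(t)>\tau\}} u_\eps\,dx$. Its advantage is that differentiation in $t$ produces no boundary term: $\p_t P(t,\tau) = \int_{\{u_\eps>\tau\}} \p_t u_\eps\,dx$. Inserting \eqref{eq:viscousPDE} and applying the divergence theorem on the super-level set (valid for a.e.\ $\tau$ by Sard's theorem, since $u_\eps$ is smooth), together with $-\D\g\ast u_\eps = u_\eps - \bar u_0$, one arrives at
\begin{equation*}
\p_t P(t,\tau) = -\tau^m\bigl(M_\eps(t,\tau) - \bar u_0\,\mu_\eps(t,\tau)\bigr) - \eps\int_{\{u_\eps=\tau\}}|\nab u_\eps|\,dH^{d-1}.
\end{equation*}
The viscous boundary integral is nonpositive, because the outward normal to $\{u_\eps>\tau\}$ is antiparallel to $\nab u_\eps$, and may be discarded.

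The next step is to transfer this inequality to $k_\eps$. Setting $\tau(t) := (u_\eps)_*(t,s)$, the Hardy-Littlewood identification yields $\mu_\eps(t,\tau(t)) = s$ and $M_\eps(t,\tau(t)) = k_\eps(t,s)$ for a.e.\ $s$, while a short chain-rule computation based on $\p_\tau P = -\mu_\eps$ produces the cancellation
\begin{equation*}
\p_t k_\eps(t,s) = \p_t P(t,\tau)\bigl|_{\tau=\tau(t)},
\end{equation*}
the $\p_t\tau$ contributions cancelling exactly. Combined with $\p_s k_\eps = (u_\eps)_* \ge 0$, this yields the announced inequality pointwise a.e. The pointwise convergence $k_\eps\to k$ then follows from the $L^1$-contractivity of the decreasing rearrangement, $\|(u_\eps)_*(t) - u_*(t)\|_{L^1(0,1)} \le \|u_\eps(t) - u(t)\|_{L^1(\T^d)}$, combined with the strong $L^1$ compactness of Proposition \ref{prop:compactness}, which implies $k_\eps(t,\cdot)\to k(t,\cdot)$ uniformly on $[0,1]$ for a.e.\ $t>0$. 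Finally, the one-sided Lipschitz bound is direct from the subsolution inequality and the maximum principle $(u_\eps)_* \le \|u_0\|_{L^\infty}$ of Proposition \ref{prop:instantbounds}: $\p_t k_\eps \le (\p_s k_\eps)^m(s\bar u_0 - k_\eps)_+ \le \|u_0\|_{L^\infty}^{m+1}$.

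The hard part is making the subsolution step fully rigorous, since three delicate ingredients must be handled at once: Sard's theorem, to ensure that $\{u_\eps > \tau\}$ has smooth boundary for a.e.\ $\tau$; the Hardy-Littlewood identification $k_\eps(t,s) = M_\eps(t,\tau(t))$, which fails wherever $u_\eps$ admits a flat piece at level $\tau(t)$ and thus holds only for a.e.\ $s$; and the differentiability in $t$ of the rearrangement $(u_\eps)_*(\cdot,s)$, needed to justify the chain rule. Each is resolved by restricting to regular values, so the inequality is obtained pointwise a.e.\ on $\R_+ \times (0,1)$---the natural interpretation of ``classical subsolution'' here.
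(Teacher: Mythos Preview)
Your proof is correct and follows essentially the same route as the paper's. Both arguments integrate $\partial_t u_\eps$ over the super-level set $\{u_\eps>\tau\}$, insert the PDE, discard the viscous boundary contribution, and reduce the drift term to $-\tau^m\bigl(\int_{\{u_\eps>\tau\}}u_\eps\,dx - \bar u_0\,\mu_\eps(t,\tau)\bigr)$ before passing to $\tau=(u_\eps)_*(t,s)$; convergence and the one-sided Lipschitz bound are then handled identically via the $L^p$-contractivity of the rearrangement and Proposition~\ref{prop:instantbounds}.

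The one presentational difference is worth noting: the paper invokes \cite[Corollary~9.2.1]{Rakotoson2008} to obtain directly
\[
\int_0^{\mu_\eps(\theta)}\partial_t(u_\eps)_*(t,\sigma)\,d\sigma=\int_{\{u_\eps>\theta\}}\partial_t u_\eps(t,x)\,dx,
\]
whereas you recover the same identity by hand through the truncation $P(t,\tau)=\int(u_\eps-\tau)_+$ and the chain-rule cancellation $\partial_\tau P=-\mu_\eps$. Your version is slightly more self-contained and makes the role of Sard's theorem and the regular-value restriction explicit; the paper's version is shorter but hides those same issues inside the cited reference. Either way, the substance is the same.
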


\begin{proof}
    We first notice that $\p_s k_\ep =( u_\ep)_* \in L^\infty$. Defining $\mu_\ep(\theta) := |\{(u_\ep)_* >\theta \}|$, we have from \cite[Corollary 9.2.1]{Rakotoson2008}
    \begin{align*}
        \int_0^{\mu_\ep(\theta)}\p_t (u_\ep)_*(t,\sigma)d\sigma &= \int_{\{u_\ep >\theta \}} \p_t u_\ep(t,x)dx \\
        &= \int_{\{u_\ep>\theta \}}\div(u_\ep^m\nab\g\ast u_\ep)dx + \ep\int_{\{u_\ep>\theta \}} \D u_\ep dx.
    \end{align*}
    The viscous term is negative and can be discarded, since $\indic_{u_\ep>\theta }\D u_\ep = \D (u_\ep-\theta)_+ - |\nab u_\ep|^2 \d_{u_\ep = \theta}$. We thus obtain
    \begin{align*}
        \int_0^{\mu_\ep(\theta)}\p_t (u_\ep)_*(t,\sigma)d\sigma & \le \int_{\{u_\ep>\theta \}} \nab u_\ep^m\cdot\nab \g\ast u_\ep dx - \int_{\{u_\ep>\theta \}} u_\ep^m (u_\ep-\bar u_\ep)dx \\
        &= \int_{\T^d} \nab (u_\ep^m-\theta^m)_+\cdot\nab\g\ast u_\ep dx - \int_{\{u_\ep>\theta \}} u_\ep^m (u_\ep-\bar u_\ep)dx \\
        &= -\theta^m \int_{\{u_\ep>\theta\}} (u_\ep-\bar u_\ep)dx.
    \end{align*}
    We then send $\theta\to (u_\ep)_* (t,s)$ for some fixed $s\in (0,1)$, which gives
    \begin{equation*}
        \p_t \int_0^s (u_\ep)_* d\sigma = \int_0^s \p_t(u_\ep)_* d\sigma \le -( u_\ep)_*^m(t,s) \bigg(\int_0^s (u_\ep)_* d\sigma - s \bar u_\ep \bigg).
    \end{equation*}
    This shows that
    \begin{equation*}
        \p_t k_\ep + (\p_s k_\ep)^m(k_\ep - s\bar u_\ep) \le 0.
    \end{equation*}
    In particular, 
    \begin{equation*}
        \p_t k_\ep(t,s) \le C(\|u_0\|_{L^\infty}).
    \end{equation*}
    We know from \cite[Theorem 1.3.1]{Rakotoson2008} that the decreasing rearrangement is a contraction operator on any $L^p$, $1\le p\le \infty$. Since $u_\ep(t) \to u(t)$ strongly in $L^p$, $1\le p <\infty$ for a.e. $t>0$, we deduce that, for all $t>0$ and $s\in (0,1)$,
    \begin{align*}
       & |k_\ep(t,s)-k(t,s) |\le \| u_\ep(t)-u(t)\|_{L^p} s^{1/p_*} \le \| u_\ep(t)-u(t)\|_{L^p} \xrightarrow[\ep \to 0]{} 0, \\
       & \|\p_s k_\ep(t)-\p_s k(t)\|_{L^p}\le \| u_\ep(t)-u(t)\|_{L^p} \xrightarrow[\to\to 0]{}0.
    \end{align*}
    
\end{proof}

\begin{cor}[Comparison principle]\label{prop:comparisonprinciple}
    Let $\tilde k$ be a viscosity supersolution to \eqref{eq:primitive} and $k$ defined in Lemma \ref{lem:subsolution}. We have $\tilde k\ge k$ on $\R_+\times (0,1)$.
\end{cor}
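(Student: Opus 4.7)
The idea is to apply the comparison principle at the viscous level, where $k_\ep$ is a smooth subsolution thanks to Lemma \ref{lem:subsolution}, and then pass to the limit $\ep \to 0$. The argument has two ingredients: the standard viscosity comparison principle for \eqref{eq:primitive} (the adaptation of \cite[Theorems 4.3, 4.4]{CGV2022vortex} already invoked in the text), and a quantification of the initial-data mismatch between $k_\ep$ and the limiting problem.

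First, I would bound the initial-data error. Since the decreasing rearrangement is an $L^1$-contraction,
\begin{equation*}
\eta_\ep := \sup_{s \in [0,1]} |k_\ep(0,s) - k_0(s)| \le \|(u_{0,\ep})_* - (u_0)_*\|_{L^1([0,1])} \le \|u_{0,\ep} - u_0\|_{L^1(\T^d)} \xrightarrow[\ep\to 0]{} 0,
\end{equation*}
while $k_\ep(t,0) = 0$ and $k_\ep(t,1) = \bar u_\ep = \bar u_0$ by mass conservation (and $\bar u_\ep = \bar u_0$ because $u_{0,\ep}$ is a mollification of $u_0$).

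The next step is an algebraic observation: because the Hamiltonian $H(s,r,p) = p_+^m(r - s\bar u_0)$ is nondecreasing in $r$ (as $p_+^m \ge 0$), the function $\tilde k + \eta_\ep$ is again a viscosity supersolution of \eqref{eq:primitive}. Indeed, $D^-(\tilde k + \eta_\ep) = D^- \tilde k$, and for $(p_1,p_2) \in D^-\tilde k(t,s)$,
\begin{equation*}
p_1 + (p_2)_+^m (\tilde k + \eta_\ep - s\bar u_0) = p_1 + (p_2)_+^m(\tilde k - s\bar u_0) + \eta_\ep(p_2)_+^m \ge 0.
\end{equation*}
Moreover, $\tilde k + \eta_\ep$ dominates the initial and boundary data of $k_\ep$: $(\tilde k + \eta_\ep)(0,s) \ge k_0(s) + \eta_\ep \ge k_\ep(0,s)$, $(\tilde k + \eta_\ep)(t,0) \ge \eta_\ep \ge k_\ep(t,0)$, and $(\tilde k + \eta_\ep)(t,1) \ge \bar u_0 + \eta_\ep \ge k_\ep(t,1)$. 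The standard viscosity comparison principle applied to the classical subsolution $k_\ep$ and the supersolution $\tilde k + \eta_\ep$ yields $k_\ep \le \tilde k + \eta_\ep$.

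Finally, I would pass to the limit: the pointwise convergence $k_\ep(t,s) \to k(t,s)$ from Lemma \ref{lem:subsolution}, together with $\eta_\ep \to 0$, gives $k \le \tilde k$ on $\R_+ \times (0,1)$. The main content has already been secured in Lemma \ref{lem:subsolution}, which pins down both the subsolution property of $k_\ep$ and its explicit boundary/initial values; the remaining work reduces to the elementary fact that adding a nonnegative constant preserves the supersolution property of \eqref{eq:primitive}, so no subtle treatment of the degenerate $(p)_+$ nonlinearity is needed beyond invoking the comparison principle as a black box.
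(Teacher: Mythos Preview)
Your proposal is correct and follows the same strategy as the paper: apply the viscosity comparison principle to the classical subsolution $k_\ep$ against the supersolution $\tilde k$, then pass to the limit using the pointwise convergence $k_\ep\to k$ from Lemma~\ref{lem:subsolution}. Your extra care with the $\eta_\ep$-shift to absorb the initial-data mismatch $k_\ep(0,\cdot)\neq k_0$ is a detail the paper glosses over, but the approach is otherwise identical.
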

\begin{proof}
    Consider the approximation scheme used to obtain $k$. We have already shown that $k_\ep$ is a classical subsolution to \eqref{eq:primitive}. Using the comparison principle for viscosity solutions (which is a straightforward adaptation from \cite[Theorem 4.3]{CGV2022vortex}), we have $\tilde k\ge k_\ep$ on $\R_+\times (0,1)$. Sending $\ep\to 0$ gives the result.
\end{proof}

\subsection{A well-chosen supersolution}

In order to prove our result on the size of the support of solutions to \eqref{eq:PDE}, we will need to compare them with a well-chosen supersolution. The main difference from the Euclidean setting of \cite{CGV2022vortex} is that, in our periodic setting, it seems difficult to use analytical formulas for viscosity solutions of \eqref{eq:primitive} when a rarefaction wave occurs. We thus exhibit a supersolution in Lemma \ref{lemma:supersolution}. We mention that some rarefaction waves may not appear in our periodic setting, a sharp difference from the Euclidean setting of \cite{CGV2022vortex}. Indeed, the solution constructed in Proposition \ref{prop:twovortex} of the appendix is a viscosity solution, whereas there must be a rarefaction wave in the nonperiodic case (see \cite[Section 4.4]{CGV2022vortex}).

\paragraph{The Rankine-Hugoniot condition.}

Using the continuity of the mass at a shock situated at $S\in [0,1]$,
$$k(t,S_-) = k(t,S_+),$$
we obtain the following Rankine-Hugoniot condition:
\begin{equation*}
    \frac{dS}{dt} = (k(t,S(t)) - \bar u_0 S(t)) \frac{\p_sk(t,S_+)^m- \p_sk(t,S_-)^m}{\p_sk(t,S_+)-\p_sk(t,S_-)}.
\end{equation*}
This condition allows us to obtain some specific solutions to \eqref{eq:primitive}.

We find a suitable viscosity supersolution of~\eqref{eq:primitive} that has an instantaneous growth for the support. Using the comparison principle from Corollary \ref{prop:comparisonprinciple}, we will be able to prove that the same property holds for solutions of \eqref{eq:PDE} constructed by vanishing viscosity. 

\begin{lemma}\label{lemma:supersolution}
    Let $C>0$ small enough, such that $C\bar u_0 \le 1$. Consider the map $ \sigma(u) := u^\frac{m}{m-1}$, and $\al \in (0,1)$ large enough, such that $2(1-\al)\sigma'(1)\le 1$. Let $(S_1,S_2,S_3)$ be respectively defined as
    \begin{align*}
        &S_1 := C\al \bar u_0,\\ 
        &\dot S_2 (t) = -(1-\al)^{m-1}\bar u_0^m \frac{S_3-S_2 + (1-\al)\sigma'(1) }{(S_3-S_2)^{m-1}}\sigma'(1)^{m-1},  \\
        &\dot S_3 (t) = (1-\al)^{m-1}\bar u_0^m \frac{1-S_3}{(S_3-S_2)^{m-1}}\sigma'(1)^{m-1},
    \end{align*}
    with initial condition $1\ge S_3^0> S_2^0> S_1$. Let us introduce the notation $\displaystyle u := \frac{s-S_2}{S_3-S_2}$. The function
    \begin{equation*}
     \tilde k_{C,S_2^0,S_3^0}(t,s) := \begin{cases}
        C^{-1}s & 0\le s \le S_1, \\
        \al \bar u_0 & S_1 < s \le S_2, \\
    \sigma(u) (1-\al )\bar u_0 + \al \bar u_0 & S_2 < s \le S_3, \\
    \bar u_0  & S_3 < s \le 1,
    \end{cases}
    \end{equation*}
    is a viscosity supersolution to \eqref{eq:primitive} on $[0,T_*]$, where we introduce
    \begin{align*}
        T_* &:= \inf\{t>0 : S_2(t) = S_1\},\\
         T^* &:=\inf\{t>0 : 2S_3(t) = 1 +S_3^0\},
    \end{align*}
    satisfying
    \begin{align*}
        T_* \ge C_m\bigg(\frac{S_2^0-S_1}{\bar u_0}\bigg)^m, \qquad
        T^* \ge C_m\bigg(\frac{1-S_3^0}{\bar u_0}\bigg)^m .
    \end{align*}
    for some constant $C_m>0$ depending only on $m$. 
    We finally record the following:
    \begin{equation*}
    \begin{cases}
      \forall t\in [0,T_*], &\quad S_2(t) \ge  S_2^0 - C_m\bar u_0 t^\frac{1}{m} , \\
      \forall t\in [0,\min(T^*,T_*)],&\quad  S_3(t) \ge S_3^0 + C_m' (1-\al)^{\frac{m-1}{m}} \bar u_0 t^\frac{1}{m}  \\
     \forall t\ge 0, &\quad S_3(t) \le S_3^0 + C_m'' \bar u_0 t^\frac{1}{m}.
    \end{cases}
    \end{equation*}
    
\end{lemma}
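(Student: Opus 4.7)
The approach is to verify the supersolution condition piecewise and extract the estimates from the ODEs for $S_2,S_3$.

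\textbf{Continuity and classical inequality in the smooth regions.} Continuity of $\tilde k$ at $S_1,S_2,S_3$ follows from $S_1=C\al\bar u_0$, $\sigma(0)=0$, and $\sigma(1)=1$. Since $\partial_s\tilde k\ge 0$ throughout, $(\partial_s\tilde k)_+^m=(\partial_s\tilde k)^m$. On $[0,S_1]$ the supersolution inequality reduces to $(1/C)^m s(1/C-\bar u_0)\ge 0$, ensured by $C\bar u_0\le 1$; on the two constant pieces $[S_1,S_2]$ and $[S_3,1]$ it is trivial since both derivatives vanish. The crux is $[S_2,S_3]$: with $u:=(s-S_2)/(S_3-S_2)$, the key identity $\sigma'(u)^{m-1}=\sigma'(1)^{m-1}u$ (specific to $\sigma(u)=u^{m/(m-1)}$) allows, after substituting the ODEs for $\dot S_2,\dot S_3$, to factor the left-hand side as a nonnegative prefactor times
\[
(S_3-S_2)(1-u^2)+(1-\al)\bigl[\sigma'(1)(1-u)-u+u^{(2m-1)/(m-1)}\bigr].
\]
The first summand is nonnegative. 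For the second, set $f(u):=\sigma'(1)(1-u)-u+u^{(2m-1)/(m-1)}$; a short computation gives $f(1)=f'(1)=0$ and $f''(u)=\tfrac{m(2m-1)}{(m-1)^2}u^{1/(m-1)}>0$ on $(0,1]$, so $f$ is strictly convex with global minimum $0$ at $u=1$, whence $f\ge 0$.

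\textbf{Viscosity condition at the kinks.} At $s=S_1$ and $s=S_3$, $\tilde k$ is a concave-down kink (left-slope strictly exceeding right-slope); a direct check from Definition \ref{def:viscosity} shows $D^{-}\tilde k=\emptyset$ at these points, so the supersolution condition is vacuous. At $s=S_2$, $\sigma'(0)=0$ for $m>1$ makes the right-derivative equal to the zero left-derivative; $\tilde k$ is therefore $C^1$ in space, and because $\tilde k\equiv\al\bar u_0$ along the moving graph $s=S_2(t)$ we also have $\partial_t\tilde k=0$, so the equation holds classically with equality.

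\textbf{Time and spatial estimates.} The signs $\dot S_2<0$, $\dot S_3>0$ and the condition $2(1-\al)\sigma'(1)\le 1$ give $S_2(t)\in[S_1,S_2^0]$ on $[0,T_*]$ and $S_3(t)\in[S_3^0,(1+S_3^0)/2]$ on $[0,T^*]$, and bound the numerators in $\dot S_2,\dot S_3$ uniformly. Combining $\dot S_3\le C_m\bar u_0^m/(S_3-S_2)^{m-1}$ with $S_3-S_2\ge S_3-S_3^0$ (valid since $S_2\le S_2^0\le S_3^0$) yields $(S_3-S_3^0)^{m-1}\dot S_3\le C_m\bar u_0^m$, which integrates to $S_3(t)-S_3^0\le C_m'\bar u_0 t^{1/m}$; this is the upper bound on $S_3$, and evaluating at $t=T^*$ with $S_3-S_3^0=(1-S_3^0)/2$ produces the lower bound on $T^*$. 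The bound on $T_*$ and the lower bound on $S_2$ follow identically from $\dot S_2$ via $S_3-S_2\ge S_2^0-S_2$. The lower bound on $S_3(t)$ is obtained by summing the two ODEs to get $\dot(S_3-S_2)\ge C_m(1-\al)^{m-1}\bar u_0^m/(S_3-S_2)^{m-1}$, which integrates to $(S_3-S_2)^m\ge C_m(1-\al)^{m-1}\bar u_0^m t$ and then transfers to a lower bound on $S_3-S_3^0$ up to the controlled displacement of $S_2$.

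The hard step is the middle-region computation: the ODEs for $\dot S_2,\dot S_3$ are tuned precisely so that after substitution the residual function $f$ vanishes to second order at $u=1$, and only then does strict convexity force $f\ge 0$. Without the exact profile $\sigma(u)=u^{m/(m-1)}$, which is what makes $\sigma'(u)^{m-1}$ linear in $u$ and closes the algebra, this cancellation would not happen.
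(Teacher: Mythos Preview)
Your approach follows the paper's outline: piecewise verification of the supersolution inequality, reduction of the middle region to a sign condition, emptiness of $D^{-}\tilde k$ at the concave kinks $S_1,S_3$, and ODE estimates for $S_2,S_3$. Your factoring of the $[S_2,S_3]$ contribution as $(S_3-S_2)(1-u^2)+(1-\al)f(u)$ and the convexity argument for $f$ is correct and arguably cleaner than the paper's, which instead differentiates the full bracket in $u$ and shows the derivative is nonpositive with the bracket vanishing at $u=1$; both hinge on the same identity $\sigma'(u)^{m-1}=\sigma'(1)^{m-1}u$. Your direct route to the upper bound on $S_3$ via $S_3-S_2\ge S_3-S_3^0$ is also a neat shortcut compared with the paper, which first establishes the lower bound $S_3-S_2\ge C_m(1-\al)\bar u_0 t^{1/m}$ and plugs it into $\dot S_3$.

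There is one genuine gap: your route to the \emph{lower} bound on $S_3$. Transferring a lower bound on $S_3-S_2$ to one on $S_3-S_3^0$ ``up to the controlled displacement of $S_2$'' does not close, because the displacement $S_2^0-S_2(t)\le C_m\bar u_0 t^{1/m}$ carries no small factor, whereas the gain in $S_3-S_2$ you obtain is only of size $(1-\al)^{(m-1)/m}\bar u_0 t^{1/m}$ (and $\al$ is taken close to $1$); the loss dominates the gain. The paper proceeds differently here: it first derives an \emph{upper} bound $S_3(t)-S_2(t)\le C_m(1-\al)^{(m-1)/m}\bar u_0 t^{1/m}$ on $[0,T_*]$ (from $\frac{d}{dt}(S_3-S_2)\le C(1-\al)^{m-1}\bar u_0^m/(S_3-S_2)^{m-1}$, using $2(1-\al)\sigma'(1)\le 1$), then inserts this into the denominator of $\dot S_3$ together with $1-S_3\ge \tfrac12(1-S_3^0)$ on $[0,T^*]$, obtaining $\dot S_3\ge C_m(1-\al)^{(m-1)/m}\bar u_0\,t^{-(m-1)/m}$ and integrating directly. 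This is what produces the correct $(1-\al)^{(m-1)/m}$ prefactor in $S_3(t)\ge S_3^0+C_m'(1-\al)^{(m-1)/m}\bar u_0 t^{1/m}$.
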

\begin{remark}\label{cor:twovortexcomparison}
Letting aside the proof of this Lemma for a moment, we see that, sending $C\to 0$ and $S_1^0\to 0$, $S_2^0\to S_3^0\equiv s_0\in (0,1)$, one obtains a function $\tilde k$ that satisfies the same comparison principles as viscosity supersolutions to \eqref{eq:primitive}. Indeed, if the map $\tilde k_{C,S^0_2,S_3^0}$ satisfies $\tilde k_{C,S^0_2,S_3^0} \ge \varphi$, uniformly in the parameters $C, S_2^0$, for some bounded function $\varphi$. Then, $\tilde k \ge \varphi$. The function $\tilde k$ is lower semicontinuous and satisfies on $[0,T_*]$, $\tilde k(t,0)= 0$ and
\begin{equation*}
    \tilde k (t,s) := \begin{cases}
        \al \bar u_0 & 0< s \le S_2(t), \\
        \sigma(u) (1-\al )\bar u_0 + \al \bar u_0 & S_2(t) < s \le S_3(t), \\
    \bar u_0  & S_3(t) < s \le 1.
    \end{cases}
\end{equation*}
Moreover, $\tilde k$ satisfies 
\begin{equation}\label{eq:fronttrackings}
    \begin{cases}
      \forall t\in [0,T_*], &\quad S_2(t) \ge  s_0 - C_m\bar u_0 t^\frac{1}{m} , \\
      \forall t\in [0,\min(T^*,T_*)],&\quad  S_3(t) \ge s_0 + C_m' (1-\al)^{\frac{m-1}{m}} \bar u_0 t^\frac{1}{m}  \\
     \forall t\ge 0, &\quad S_3(t) \le s_0 + C_m'' \bar u_0 t^\frac{1}{m},
    \end{cases}
    \end{equation}
where $T_* \ge C_m (s_0/\bar u_0)^m$ and $T^* \ge C_m \big((1-s_0 )/ \bar u_0\big)^m$.
\end{remark}

\begin{proof}[Proof of Lemma \ref{lemma:supersolution}]
Some details are skipped in the proof. A more detailed proof showing that a function (in this case a single and a double-vortex) is a viscosity solution can be found in the appendix.
By construction, $ \tilde k_{C,S_2^0,S_3^0}$ is continuous on $[0,T_*]\times [0,1]$, and $C^1$ outside of $\Gamma_1 \cup \Gamma_3$, where $\Gamma_i := \{(t,s) \in [0,T_*]\times [0,1] : s= S_i \}$. Consider $(t,s)$ such that $s< S_1$. We have
\begin{align*}
    &\p_t \tilde k_{C,S_2^0,S_3^0} (t,s) + (\p_s \tilde k_{C,S_2^0,S_3^0})^m_+(t,s) (\tilde k_{C,S_2^0,S_3^0}(t,s)-s\bar u_0) \\
    &= C^{-m}(C^{-1}-\bar u_0) s \ge 0.
\end{align*}
If $(t,s)$ is such that $s>S_3$ or $S_1<s<S_2$, the equation is trivially satisfied. Let us then consider $(t,s)$ such that $S_2 < s < S_3$. First,
\begin{align*}
    \p_t \tilde k_{C,S_2^0,S_3^0}(t,s) &= \frac{(1-\al)\bar u_0}{S_3-S_2} \sigma'(u) \big( -(1-u)\dot S_2 - u \dot S_3  \big) \\
    &= \bigg(\frac{(1-\al)\bar u_0}{S_3-S_2}\bigg)^m \bar u_0 \sigma'(1)^{m-1} \sigma'(u)  \big( (1-u) (S_3-S_2 + (1-\al)\sigma'(1)) - u(1-S_3)\big).
\end{align*}
Moreover,
\begin{align*}
    &\p_s \tilde k_{C,S_2^0,S_3^0}(t,s)^m(\tilde k_{C,S_2^0,S_3^0}(t,s) -s \bar u_0) \\
    &= \bigg(\frac{(1-\al)\bar u_0}{S_3-S_2}\bigg)^m \bar u_0 \sigma'(u)^m \big( (1-\al) \sigma(u) + \al - s \big) .
\end{align*}
Therefore, 
\begin{align*}
    &\p_t \tilde k_{C,S_2^0,S_3^0}(t,s) +\p_s \tilde k_{C,S_2^0,S_3^0}(t,s)^m(\tilde k_{C,S_2^0,S_3^0}(t,s) -s \bar u_0) \\
    &= \bigg(\frac{(1-\al)\bar u_0}{S_3-S_2}\bigg)^m \bar u_0\sigma'(u)\bigg( \sigma'(1)^{m-1}   \big( (1-u) (S_3-S_2 + (1-\al)\sigma'(1)) - u(1-S_3)\big) \\
    &+ \sigma'(u)^{m-1} \big( (1-\al) \sigma(u) + \al - s \big)  \bigg).
\end{align*}
The right-hand side above goes to zero as $u\to 1$ (hence $s\to S_3$). Moreover, the function 
\begin{multline*}
    u\mapsto \sigma'(1)^{m-1}   \big( (1-u) (S_3-S_2 + (1-\al)\sigma'(1)) - u(1-S_3)\big) \\
    + \sigma'(u)^{m-1} \big( (1-\al) \sigma(u) + \al - s \big)
\end{multline*}
is differentiable in $u$, and using  $\sigma'(u)^{m-1} = \sigma'(1)^{m-1} u$, $s= (S_3 - S_2) u + S_2$ its derivative is 
\begin{align*}
    &\sigma'(1)^{m-1}   \big( - (S_3-S_2 + (1-\al)\sigma'(1)) - (1-S_3)\big) 
    + \sigma'(1)^{m-1} \big( (1-\al) \sigma(u) + \al - s \big) \\
    &+ \sigma'(1)^{m-1}u \big( (1-\al) \sigma'(u) - S_3+S_2 \big),
\end{align*}
whose sign is that of
\begin{align*}
     &  - (S_3-S_2 + (1-\al)\sigma'(1)) - (1-S_3) 
    +  (1-\al) \sigma(u) + \al - s  \\
    &+ u \big( (1-\al) \sigma'(u) - S_3+S_2 \big)\\
    &\le  - (S_3-S_2 + (1-\al)\sigma'(1)) +S_3- s  
    + u \big( (1-\al) \sigma'(u) - S_3+S_2 \big) \\
    &\le  - (S_3-S_2 + (1-\al)\sigma'(1)) + S_3-S_2 + (1-\al)\sigma'(1) \\
    &= 0.
\end{align*}

Therefore, 
\begin{equation*}
    \p_t \tilde k_{C,S_2^0,S_3^0} + (\p_s \tilde k_{C,S_2^0,S_3^0})^m_+ (\tilde k_{C,S_2^0,S_3^0}-s\bar u_0) \ge 0,
\end{equation*}
for all $s\in (S_2, S_3)$.

We finally notice that
\begin{align*}
     D^{-}\tilde k_{C,S_2^0,S_3^0}(t,S_1) = \emptyset, 
    & \quad D^{-}\tilde k_{C,S_2^0,S_3^0}(t,S_3(t)) = \emptyset.
\end{align*}
Therefore, $\tilde k_{C,S_2^0,S_3^0}$ is a viscosity supersolution on $[0,T_*]$. Let us now derive a lower bound for $T_*$. 
First, we have $S_2(t)\le 1$ for all $t\ge 0$, so that
\begin{align*}
    \frac{d}{dt}(S_3-S_2) &= (1-\al)^{m-1}\bar u_0^m  \sigma'(1)^{m-1} \frac{1-S_2 + (1-\al)\sigma'(1)}{(S_3-S_2)^{m-1}} \\
    &\ge (1-\al)^{m}\bar u_0^m \frac{1 }{(S_3-S_2)^{m-1}}\sigma'(1)^{m}.
\end{align*}
Solving this equation, one obtains
\begin{equation}\label{ecartS3S2lowerbound}
    \forall t\ge 0, \quad S_3(t)-S_2(t) \ge C_m(1-\al) \bar u_0 t^\frac{1}{m}.
\end{equation}
We also record the following: if $2(1-\al)\sigma'(1)\le 1$ and $t\le T_*$ so that $S_2\ge S_1\ge0$,
\begin{align*}
    \frac{d}{dt}(S_3-S_2) 
    &\le C(1-\al)^{m-1}\bar u_0^m  \frac{1}{(S_3-S_2)^{m-1}}.
\end{align*}
Therefore,
\begin{equation}\label{ecartS3S2upperbound}
   \forall t\in [0,T_*],\quad  S_3(t)-S_2(t)\le  C_m'(1-\al)^{\frac{m-1}{m}}\bar u_0 t^\frac{1}{m}.
\end{equation}
For $t\in [0,T_*]$, we have $S_3-S_2 \le 1$. Using also \eqref{ecartS3S2lowerbound} and $2(1-\al)\sigma'(1)\le 1$,
\begin{align*}
    \dot S_2 &= -(1-\al)^{m-1}\bar u_0^m \frac{S_3-S_2 + (1-\al)\sigma'(1) }{(S_3-S_2)^{m-1}}\sigma'(1)^{m-1} \\
    &\ge -C_m \bar u_0 t^{-\frac{m-1}{m}}.
\end{align*}
Therefore,
\begin{equation*}
   \forall t\in [0,T_*],\quad  S_2(t) \ge S_2^0 -C_m \bar u_0 t^\frac{1}{m}.
\end{equation*}
This gives the lower bound on $T_*$. Finally, define for all $S_3^0 \le \lambda \le 1$
\begin{equation*}
    T_\lambda^* := \inf\{t>0 : S_3(t) = \lambda\}.
\end{equation*}
For all $t< \min(T_\lambda^*, T_*)$, we have using \eqref{ecartS3S2upperbound}
\begin{align*}
    \dot S_3(t) &\ge (1-\al)^{m-1}\bar u_0^m \frac{1-\lambda }{(S_3-S_2)^{m-1}}\sigma'(1)^{m-1}\\
    &\ge C_m (1-\la )(1-\al)^\frac{m-1}{m}\bar u_0 t^{-\frac{m-1}{m}}.
\end{align*}
Solving this equation gives, for all $t\le \min(T_\la^*, T_*)$,
\begin{equation*}
    S_3(t) \ge S_3^0 + C_m (1-\la )(1-\al)^\frac{m-1}{m}\bar u_0t^\frac{1}{m}.
\end{equation*}
Finally, using \eqref{ecartS3S2lowerbound},
\begin{align*}
    \dot S_3(t)& \le (1-\al)^{m-1}\bar u_0^m \frac{1 }{(S_3-S_2)^{m-1}}\sigma'(1)^{m-1} \\
    &\le C \bar u_0 t^{-\frac{m-1}{m}}.
\end{align*}
Thus, 
\begin{equation*}
   \forall t\ge 0,\quad  S_3(t) \le S_3^0 + C_m \bar u_0 t^\frac{1}{m}.
\end{equation*}
This gives a lower bound on $T_\la^*$. Taking $\displaystyle \la := \frac{1+S_3^0}{2}$ ends the proof.
\end{proof}

\subsection{Instantaneous growth of the support and waiting time}

In this subsection, we define
\begin{equation*}
    S(t) := \inf\{s\in (0,1) : k(t,s) = \bar u_0 \}.
\end{equation*}
We record the following lemma. 
\begin{lemma} 
\begin{equation*}
    S(t) = |\{s\in (0,1) : k(t,s) <\bar u_0\}| = |\supp u(t)|. 
\end{equation*}
\end{lemma}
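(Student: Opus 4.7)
The plan is to observe that $k(t,\cdot)$ is the primitive of the nonnegative decreasing rearrangement $u_*(t,\cdot)$, so it is continuous, nondecreasing, and satisfies $k(t,0)=0$, $k(t,1)=\bar u_0$ (by mass conservation). The key structural fact I will exploit is that, because $u_*(t,\cdot)$ is decreasing and nonnegative, the set $\{u_*(t,\cdot)>0\}$ is an interval of the form $[0,s^*)$ with $s^*:=|\supp u_*(t)|$; indeed, if $u_*(t,s_0)=0$ then by monotonicity $u_*(t,s)=0$ for all $s\ge s_0$.

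Denote $s^*:=|\supp u_*(t)|$. By equimeasurability of the decreasing rearrangement, one has $|\supp u(t)|=|\supp u_*(t)|=s^*$, which handles one of the two equalities as soon as we show $S(t)=s^*$. To do this, I will establish separately that $k(t,s)<\bar u_0$ for every $s<s^*$ and $k(t,s)=\bar u_0$ for every $s\ge s^*$. For the latter, on $[s^*,1]$ one has $u_*(t,\cdot)=0$, so $k(t,s)=k(t,s^*)+\int_{s^*}^s u_*(t,\sigma)d\sigma=k(t,s^*)$, and combining with $k(t,1)=\bar u_0$ gives $k(t,s)=\bar u_0$ throughout $[s^*,1]$. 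For the former, the argument above shows $u_*(t,\sigma)>0$ for all $\sigma\in[0,s^*)$, so $k(t,\cdot)$ is strictly increasing on $[0,s^*]$, which forces $k(t,s)<k(t,s^*)=\bar u_0$ for $s<s^*$.

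Putting these two facts together identifies $\{s\in(0,1):k(t,s)<\bar u_0\}$ with the interval $[0,s^*)$, whose Lebesgue measure is exactly $s^*$, while the definition $S(t)=\inf\{s:k(t,s)=\bar u_0\}$ gives $S(t)=s^*$ as well. This yields both equalities simultaneously.

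There is no real obstacle here; the only mild subtlety is ensuring strict monotonicity of $k(t,\cdot)$ on $[0,s^*)$, which reduces to the elementary observation about the decreasing rearrangement noted above. If one were worried about $k$ being merely absolutely continuous (not $C^1$), the monotonicity argument still applies pointwise via the integral representation, so no additional regularity is needed.
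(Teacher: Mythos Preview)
Your proof is correct and follows essentially the same approach as the paper's: both rely on the continuity and monotonicity of $k(t,\cdot)$ together with $k(t,1)=\bar u_0$ and the equimeasurability $|\{u_*(t)>0\}|=|\{u(t)>0\}|$. Your version is simply more detailed, making explicit the strict monotonicity of $k(t,\cdot)$ on $[0,s^*)$ that the paper leaves implicit.
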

\begin{proof}
For a.e. $t>0$, the quantity $k(t,\cdot)$ is continuous, increasing, and satisfies $k(t,1)=\int_{0}^{1}u_{*}(t,\sigma)d\sigma = \int_{\T^d}u(t)dx = \bar{u}_0$. Therefore, $S(t) = |\{s\in (0,1) : k(t,s) <\bar u_0\}|$. Moreover, $|\{u_*(t)>\theta\}| = |\{u(t) >\theta \}|$, for all $\theta \ge0$. In particular, $|\supp u(t) | = |\{u_*(t) >0\}| = |\{k(t)<\bar u_0\}|$. 
\end{proof}
Adapating the proof of~\cite{CGV2022vortex} we now prove Theorem~\ref{thm:waitingtime}.

\begin{proof}[Proof of Theorem~\ref{thm:waitingtime}]
We start by proving the first item, that is the instantaneous growth of the support. We consider a sequence $(d_i,s_i)_i$ such that
    \begin{equation*}
        d_i\to \limsup_{s\to S_0^{-}} \frac{\bar u_0 - k_0(s)}{(S_0-s)^{\frac{m}{m-1}}} = +\infty,
    \end{equation*}
    $s_i\to S_0$, and, for all $i\ge 0$,
    \begin{equation*}
        \bar u_0- k_0(s_i) \ge d_i(S_0-s_i)^\frac{m}{m-1}.
    \end{equation*}
    Consider the function $\tilde k_{C,S^0_2,S^0_3}$ constructed in Lemma \ref{lemma:supersolution}, with 
    \begin{equation*}
        \begin{cases}
            C := \ep\|\p_s k_0\|_{L^\infty}^{-1}, \\
            \al := k_0(s_i)/\bar u_0, \\
            S_3^0 := s_i, \\
            S_2^0 := s_i - \ep.
        \end{cases}
    \end{equation*}
    For any $\ep \in (0,1)$ and $s\in [0,1]$, $\tilde k_{C, S_2^0, S_3^0} (0,s) \ge k_0(s)$. By the comparison principle of Proposition \ref{prop:comparisonprinciple}, we obtain that $\tilde k_{C, S_2^0, S_3^0} (t,s) \ge k_0(t,s)$ for all $t\in [0,T_*]$ and $s\in [0,1]$. Sending $\ep\to 0$ as in Remark \ref{cor:twovortexcomparison}, one obtains $\tilde k \ge k$ on $[0,T_*]\times [0,1]$. Therefore, for all $t\in [0,\min(T^*,T_*)]$, $S(t) \ge S_3(t)$. We want to take $t_i < t $ and $\ep_i>0$ so that $S(t)\ge S_3(t) \ge S_3(t_i) \ge S_0 + \ep_i >S_0$, which suffices to demonstrate our result.
    \begin{align*}
        S(t)&\ge S_3(t) \\
        &\ge S_3(t_i) \\
        &\ge s_i + C_m d_i^\frac{m-1}{m}(S_0-s_i)\bar u_0^\frac{1}{m} t^\frac{1}{m}_i \\
        &\ge S_0 + \ep_i.
    \end{align*}
    It is enough to take $\ep_i := S_0 - s_i$ and $t_i := 2\big(C_m \bar u_0 d_i^\frac{m-1}{m}\big)^{-m}\to 0$.

This concludes the proof of the nonexistence of a waiting time. 

Consider now a solution $u^{m-1}\in C^{0,1}([0,T_*)\times \T^d)$ for some $T_*>0$. In particular we assume that $u_{0}^{m-1}$ is Lipschitz, so by property of the decreasing rearrangement, $(u_{0}^{m-1})_{*}$ is Lipschitz. Therefore $k_{0}$ satisfies
\begin{equation}\label{eq:conditionCarillo}
    \limsup_{s\to S_0^{-}}\frac{\bar u_0 - k_0(s)}{(S_0-s)^\frac{m}{m-1}} < +\infty.
\end{equation}
Moreover, since $u^{m-1}_* \in C^{0,1}([0,T_*)\times [0,1])$, we have that $u_*$ is itself Lipschitz inside its support. Finally, \eqref{eq:conditionCarillo} being propagated on $[0,T_*)$ implies that $k\in C^{0,1}([0,T_*)\times [0,1])$. Therefore, all the computations in order to obtain the equation on $k$ in Lemma~\ref{lem:subsolution} that were made at the level of the viscous approximation and sending $\varepsilon\to 0$ can now be made rigorously directly on the equation with $\eps=0$ and are now rigorous. In particular, we obtain the following equation on $k$ (rather than an inequality)
\begin{equation}
    \p_t k + (\p_sk)^m_+(k-s\bar u_0) = 0,
\end{equation}
a.e. on $[0,T_*)\times [0,1]$.  We consider the explicit ansatz from \cite[Equation (4.32)]{CGV2022vortex}, which we denote $\tilde k$. We have, for some $T^*>0$, that $\tilde k $ is a viscosity subsolution to 
\begin{equation}
    \p_t \tilde k + (\p_s \tilde k)_+^m \tilde k = 0,
\end{equation}
a.e. on $[0,T^*)\times \R_+$. Therefore, restricting this function in space on $[0,1]$, one obtains that $\tilde k$ is a subsolution to \eqref{eq:primitive}. The rest of the argument follows that of \cite[Corollary 4.9]{CGV2022vortex}: by the comparison principle, one concludes that $k\ge \tilde k$ on $[0,\min (T_*, T^*))\times [0,1]$. Therefore, for all $t\in [0,\min (T_*, T^*))$, we have $S(t)\le \tilde S(t) = S_0$, where $\tilde S(t) := \inf \{s\in (0,1) : \tilde k(t,s) = \bar u_0\}$. Finally, by continuity, $k(t,s) > s\bar u_0 $ in a neighborhood of $(0,S_0)$, so that $\p_t k\le 0$ in this same neighborhood. Therefore, $S$ is nondecreasing and $S(t) \ge S_0$.

This concludes the proof.  
\end{proof}

We show here numerical simulations suggesting the waiting time phenomenon. 

\begin{figure}[H]
    \centering
    \begin{minipage}{0.45\textwidth}
        \centering
        \includegraphics[width=\linewidth]{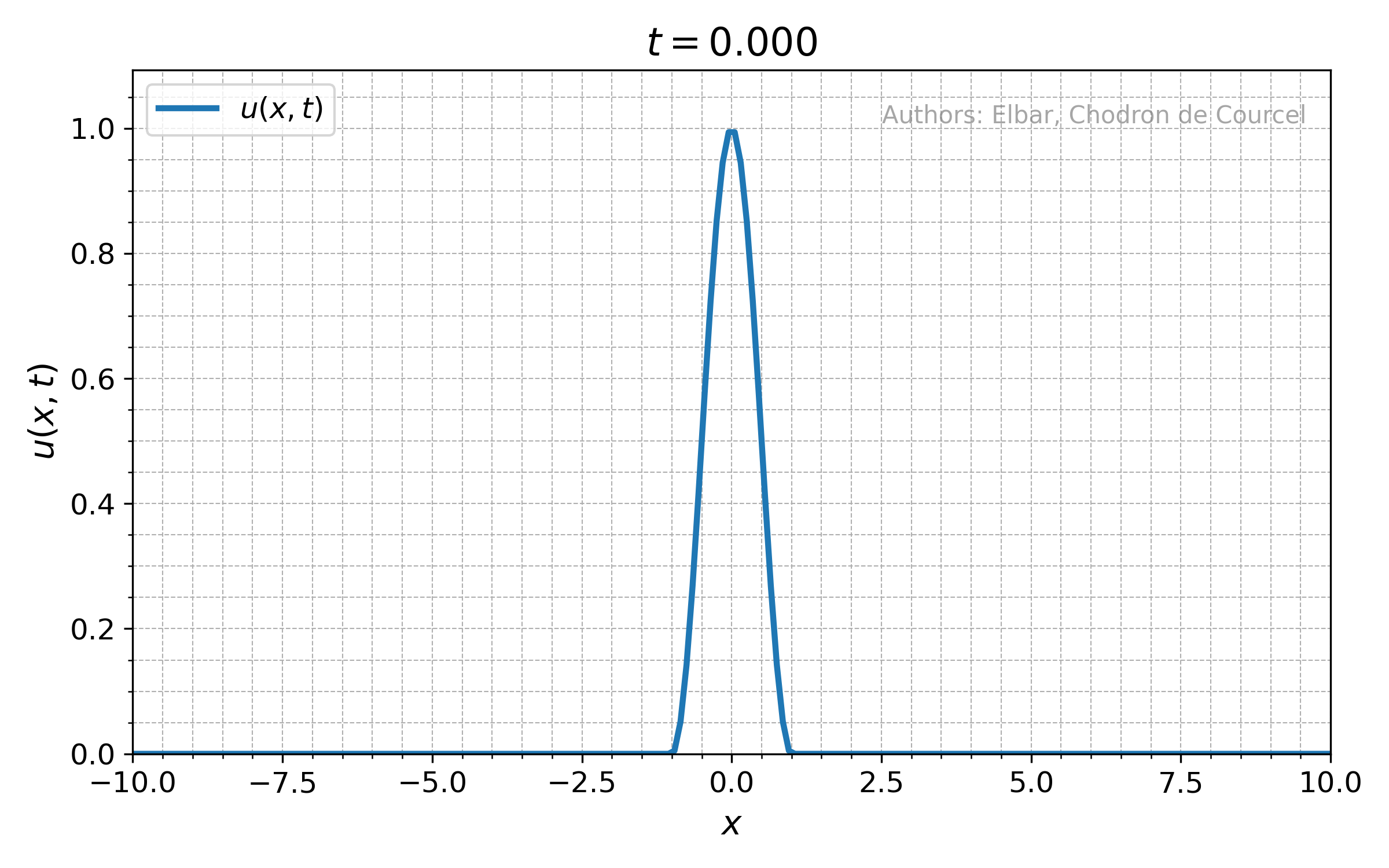}
    \end{minipage}
    \begin{minipage}{0.45\textwidth}
        \centering
        \includegraphics[width=\linewidth]{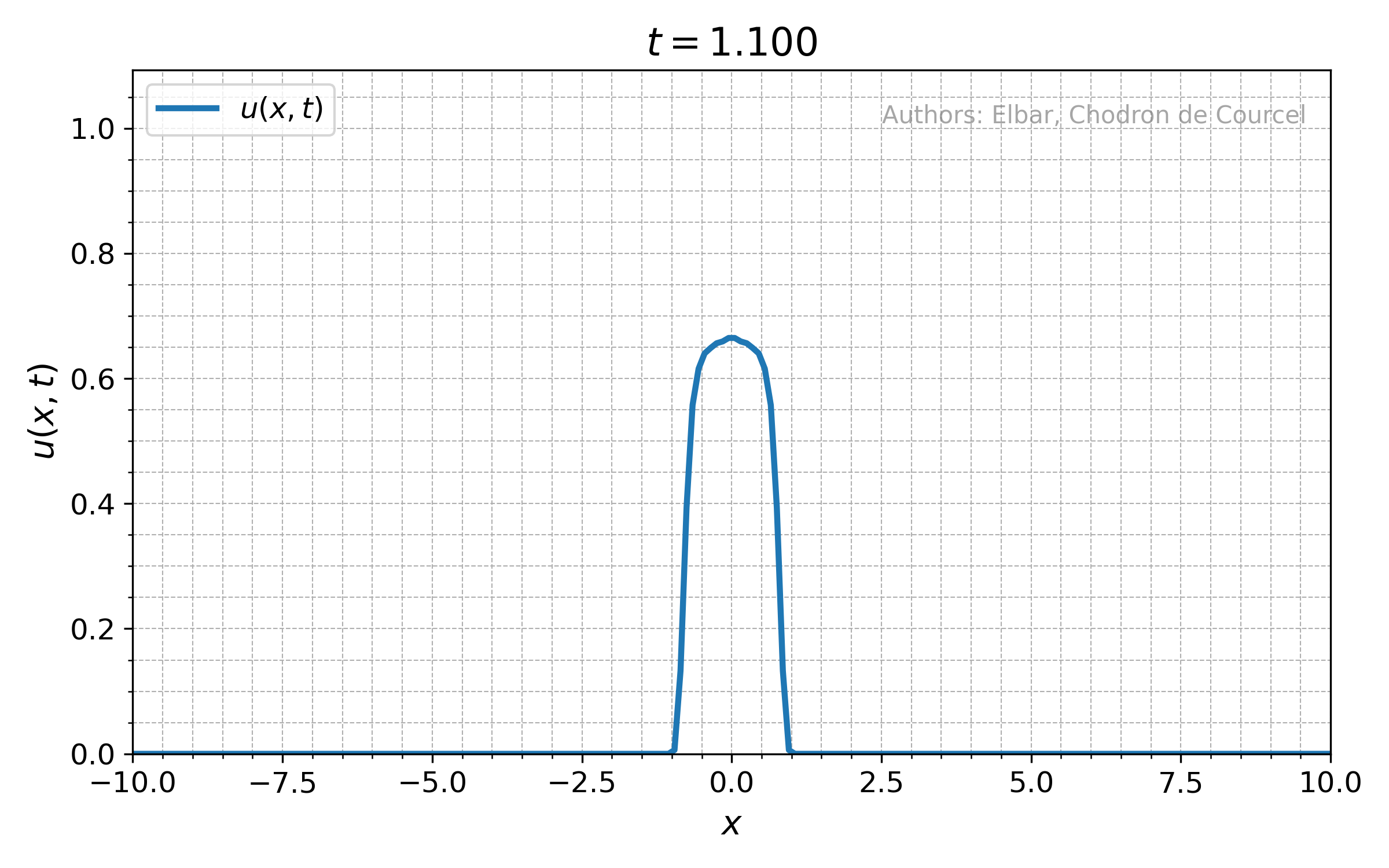}
    \end{minipage}

    \vspace{0.3cm}

    \begin{minipage}{0.45\textwidth}
        \centering
        \includegraphics[width=\linewidth]{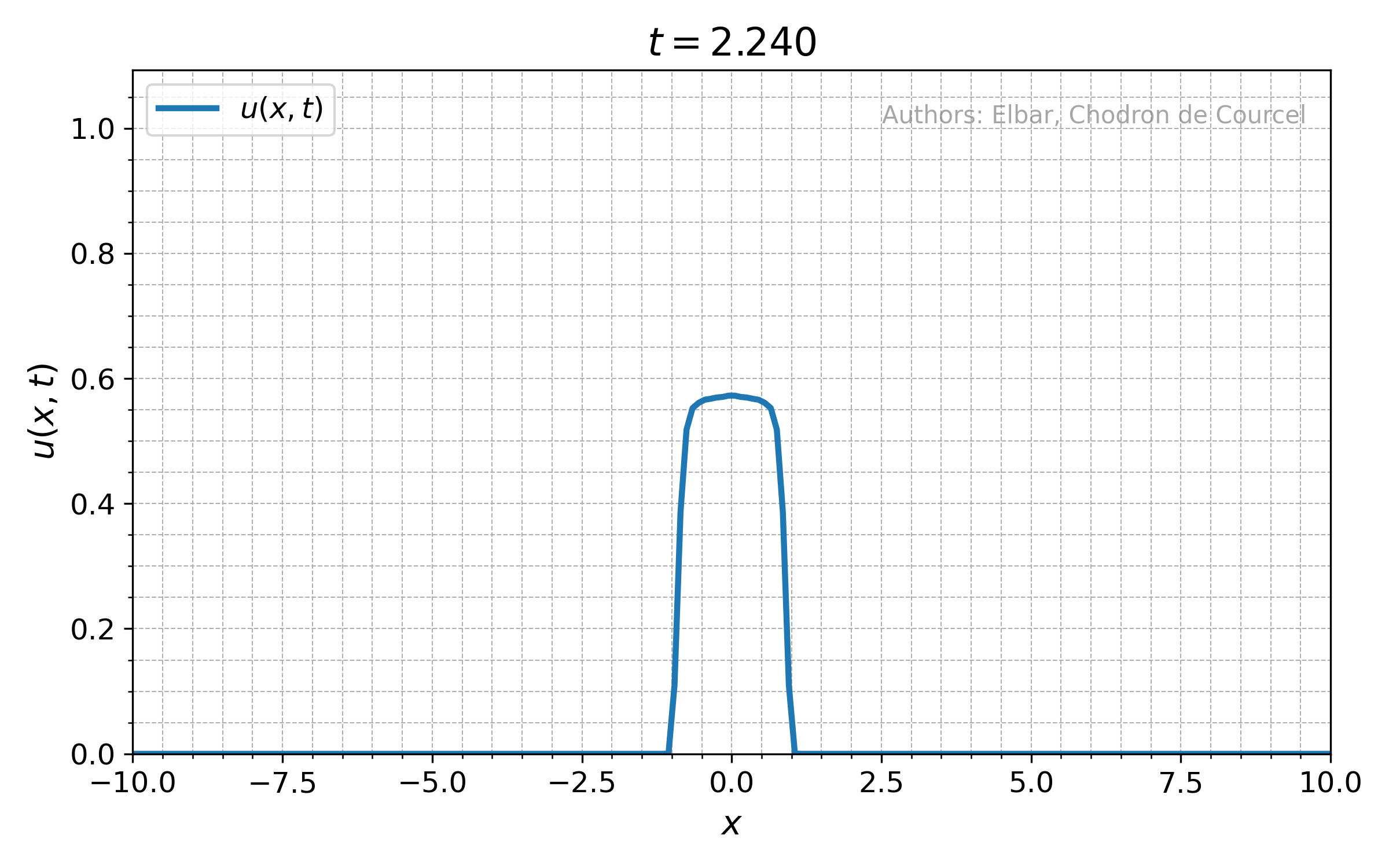}
    \end{minipage}
    \begin{minipage}{0.45\textwidth}
        \centering
        \includegraphics[width=\linewidth]{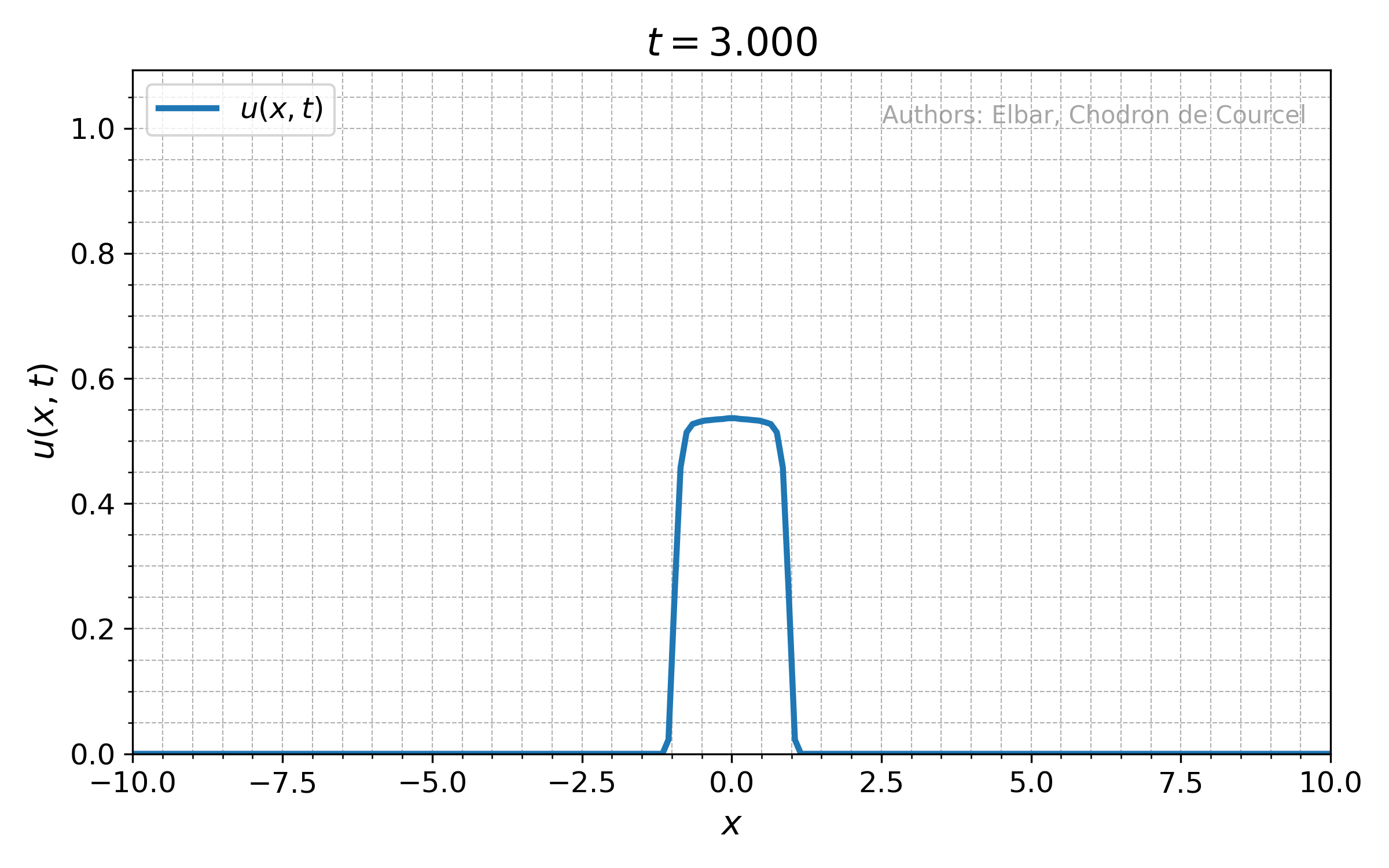}
    \end{minipage}

    \caption{Evolution of the system for $m=4$ at different times. Initially (at $t=0.000)$, the support is localized and remains the same up to $t=1.100$. After this point, and once enough pressure has built, the support starts growing.}
\end{figure}

\paragraph{Acknowledgements}
A.C.D.C. acknowledges support from the Fondation CFM, through the Jean-Pierre Aguilar fellowship.

C.E. was supported by the European Union via the ERC
AdG 101054420 EYAWKAJKOS project.

\appendix

\section{Some viscosity solutions}

We show here examples of viscosity solutions to~\eqref{eq:primitive}. We start with a simple single-vortex solution. 
\begin{prop}[Existence of a single-vortex viscosity solution]
Let $m\ge 1$. Consider the Equation~\eqref{eq:primitive}
with boundary conditions
\begin{equation}\label{eq:boundary_single}
  k(t,0)=0,\quad k(t,1)=\bar u_0 \quad\text{for } t\in(0,T),
\end{equation}
and initial datum $k(0,\cdot)=k_0(\cdot)$. Assume there exist $S_1^0,S_2^0$ with $0\le S_1^0<S_2^0\le 1$ such that
$$
k_0(s)=
\begin{cases}
  0, & s<S_1^0,\\
  \displaystyle \frac{\bar u_0}{S_2^0-S_1^0}\,(s-S_1^0), & S_1^0\le s<S_2^0,\\
  \bar u_0, & S_2^0\le s<1.
\end{cases}
$$
Let $S_1,S_2\in C^1 ([0,T) )$ be the unique solution of the Rankine-Hugoniot condition
\begin{equation}\label{eq:RH-single}
  \frac{dS_1}{dt} = -\bar u_0^{\,m}\frac{S_1}{ (S_2-S_1 )^{m-1}},
  \qquad
  \frac{dS_2}{dt} = \bar u_0^{\,m}\frac{1-S_2}{ (S_2-S_1 )^{m-1}},
\end{equation}
with initial data $S_1(0)=S_1^0$, $S_2(0)=S_2^0$. 
Define $k:(0,T)\times(0,1)\to\mathbb{R}$ by
$$
k(t,s)=
\begin{cases}
  0, & s<S_1(t),\\
  \displaystyle \alpha(t)\,(s-S_1(t)), & S_1(t)\le s<S_2(t),\\
  \bar u_0, & s\ge S_2(t),
\end{cases}
\qquad
\alpha(t):=\frac{\bar u_0}{S_2(t)-S_1(t)}.
$$
Then $k\in C^0 ([0,T)\times[0,1] )$, satisfies the boundary conditions \eqref{eq:boundary_single} and $k(0,\cdot)=k_0$, and is a viscosity solution of Equation~\eqref{eq:primitive}    on $(0,T)\times(0,1)$ in the sense of Definition~\ref{def:viscosity}.
\end{prop}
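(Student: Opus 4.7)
The plan is to show that $k$ is a classical solution of \eqref{eq:primitive} in each of the three open regions $\{s<S_1(t)\}$, $\{S_1(t)<s<S_2(t)\}$, $\{s>S_2(t)\}$, and then to verify the viscosity sub/super-differential inequalities at the two moving interfaces $s=S_1(t)$ and $s=S_2(t)$; the Rankine--Hugoniot relations \eqref{eq:RH-single} will play the role of compatibility conditions there. First I would observe that the right-hand side of \eqref{eq:RH-single} is locally Lipschitz on $\{S_2>S_1\}$, so Picard--Lindelöf gives a unique $C^1$ solution $(S_1,S_2)$ on a maximal interval $[0,T)$. The signs in \eqref{eq:RH-single} force $\dot S_1\le 0$ and $\dot S_2\ge 0$ along any admissible trajectory, so $S_1(t)\in[0,S_1^0]$, $S_2(t)\in[S_2^0,1]$, and the gap $S_2-S_1$ stays bounded below by $S_2^0-S_1^0>0$ throughout $[0,T)$. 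The continuity of the piecewise formula across both interfaces, the boundary conditions and the initial condition are all immediate from $\alpha(t)(S_2(t)-S_1(t))=\bar u_0$ and $\alpha(0)=\bar u_0/(S_2^0-S_1^0)$.

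\textbf{Classical solution in the smooth regions.} In $\{s<S_1(t)\}$ and $\{s>S_2(t)\}$ the equation is trivial since $k$ is constant and $(\partial_s k)_+^m=0$. In $\{S_1<s<S_2\}$, $k=\alpha(s-S_1)$ is $C^1$ with $\partial_s k=\alpha$ and $\partial_t k=\dot\alpha(s-S_1)-\alpha\dot S_1$. Collecting the expression $\partial_t k+\alpha^m(k-s\bar u_0)$ in powers of $s$ yields two conditions: the coefficient of $s$ gives $\dot\alpha=\alpha^m(\bar u_0-\alpha)$, and the constant term gives $\dot S_1=-S_1\bar u_0^m/(S_2-S_1)^{m-1}$. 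The first identity follows from differentiating $\alpha=\bar u_0/(S_2-S_1)$ and plugging in \eqref{eq:RH-single} (which furnishes $\dot S_2-\dot S_1=\bar u_0^m(1-(S_2-S_1))/(S_2-S_1)^{m-1}$), while the second is exactly \eqref{eq:RH-single} itself. Hence $k$ is a classical (a fortiori viscosity) solution on each of the three open regions.

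\textbf{The interfaces.} This will be the main obstacle. Fix $t_0\in(0,T)$ and consider the point $(t_0,S_1(t_0))$, where $k$ has a convex spatial kink with value $0$. A direction-by-direction analysis based on writing $(\tau,\sigma)=(t_0,S_1(t_0))+h(\cos\theta,\sin\theta)$, $h\to 0^+$, and separating the two cases $\sigma\gtrless S_1(\tau)\approx S_1(t_0)+h\dot S_1\cos\theta$, reduces to a pair of half-plane inequalities; solving each and intersecting yields
\begin{align*}
D^{-}k(t_0,S_1(t_0)) &= \{(-\dot S_1(t_0)\,p_2,\,p_2):\ 0\le p_2\le \alpha(t_0)\}, \\
D^{+}k(t_0,S_1(t_0)) &= \emptyset,
\end{align*}
where the emptiness of $D^{+}$ comes from the two half-plane conditions coming from the two sides of the kink being incompatible. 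The supersolution inequality reduces to $-\dot S_1\,p_2-p_2^m S_1\bar u_0\ge 0$; substituting $-\dot S_1=\bar u_0 S_1\,\alpha^{m-1}$ from \eqref{eq:RH-single} one gets $\bar u_0 S_1\, p_2\,(\alpha^{m-1}-p_2^{m-1})\ge 0$, which holds since $p_2\in[0,\alpha]$ and $m\ge 1$. A symmetric analysis at $(t_0,S_2(t_0))$ (concave kink) yields $D^{-}=\emptyset$ and $D^{+}=\{(-\dot S_2\,p_2,p_2):0\le p_2\le\alpha\}$, and the subsolution inequality becomes $\bar u_0(1-S_2)\,p_2\,(p_2^{m-1}-\alpha^{m-1})\le 0$, which again holds. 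The delicate part is the directional analysis producing the precise form of $D^{\pm}$; the upshot is that the Rankine--Hugoniot relations \eqref{eq:RH-single} are tuned exactly so that the admissible critical slope $p_2$ saturates at $\alpha$, forcing all inequalities to hold at the interfaces.
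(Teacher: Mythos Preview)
Your proposal is correct and follows essentially the same three-step structure as the paper's proof: classical verification on the three smooth regions, computation of the semi-differentials $D^{\pm}k$ at the two interfaces (yielding exactly the segments $\{(-\dot S_i\,p_2,p_2):0\le p_2\le\alpha\}$ and the empty set on the other side), and then checking the super/subsolution inequalities there via the Rankine--Hugoniot relations. You give slightly more detail than the paper on the ODE well-posedness and on the directional analysis producing $D^{\pm}$, but the argument is the same.
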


\begin{proof}
    First, $k$ is continuous and the boundary conditions are satisfied with equality.
    
    \underline{Step 1: Interior of the three parts.} On the left and right part, that is in $\{s<S_1(t)\}$ and $\{s>S_2(t)\}$, $k=0$ or $k=\bar{u}_0$ is constant. Then $D^{\pm}k(t,s)=\{(0,0)\}$ and we immediately check that $k$ is a viscosity solution at these points. In the middle part, $\{S_{1}(t)<s<S_{2}(t)\}$, we have $\partial_sk=\alpha$, $\partial_t k = \alpha'(s-S_1)-\alpha S_1'$. Then, using the Rankine-Hugoniot condition we find that $k$ is also a viscosity solution at these points.\\
    \underline{Step 2: Semi-differentials at the interface.} We define $\Gamma_{1}=\{(t,s): s=S_{1}(t)\}$ and $\Gamma_{2} = \{(t,s): s=S_{2}(t)\}$. From the definition of $k$ we make the observation that at $\Gamma_1$, the graph of $k$ is convex in $s$, and at $\Gamma_2$, the graph is concave.  On $\Gamma_1$, the gradients from the two sides are
$(0,0)$ (left part) and $(-\alpha S_1',\alpha)$ (middle part as $\partial_s k = \alpha$, $\partial_t k\to -\alpha S_1'$ as $s\downarrow S_1$). Hence since the semi-differential is the convexe envelope of the two points: 
$$
D^{-}k(t,S_1(t))=\{(-S_1' p_2,\,p_2): 0\le p_2\le \alpha\},\qquad
D^{+}k(t,S_1(t))=\emptyset.
$$

On $\Gamma_2$, the gradients are $(-\alpha S_2',\alpha)$ (middle) and $(0,0)$ (right). Thus
$$
D^{+}k(t,S_2(t))=\{(-S_2'p_2,\,p_2): 0\le p_2\le \alpha\},\qquad
D^{-}k(t,S_2(t))=\emptyset.
$$

\underline{Step 3: Viscosity solution} It only remains to prove that $k$ is a supersolution at $\Gamma_1$ and a subsolution at $\Gamma_2$.

Let $(p_1,p_2)\in D^{-}k(t,S_1(t))$, so $p_1=-S_1'p_2$, $0\le p_2\le \alpha$.  
Since $k(t,S_1)=0$, we have
$$
-S_1' p_2 + p_2^{\,m}\,(-S_1\bar u_0)
=p_2(-S_1'-\bar u_0 S_1\,p_2^{\,m-1}).
$$
Using $-S_1'=\bar u_0 S_1\alpha^{m-1}$ we obtain
$$
-S_1'-\bar u_0 S_1\,p_2^{\,m-1}
=\bar u_0 S_1(\alpha^{m-1}-p_2^{\,m-1})\ge0,
$$
and thus $k$ is a supersolution at $\Gamma_1$. 

Let $(p_1,p_2)\in D^{+}k(t,S_2(t))$, so $p_1=-S_2'p_2$, $0\le p_2\le\alpha$.  
Here $k(t,S_2)-S_2\bar u_0=\bar u_0(1-S_2)>0$, hence
$$
-S_2' p_2 + p_2^{\,m}\bar u_0(1-S_2)
=p_2\big(-S_2'+\bar u_0(1-S_2)p_2^{\,m-1}\big).
$$
Since $S_2'=\bar u_0(1-S_2)\alpha^{m-1}$, this equals
$$
p_2\,\bar u_0(1-S_2)\,(p_2^{\,m-1}-\alpha^{m-1})\le0.
$$
Thus $k$ is a subsolution at $\Gamma_2$.
    
\end{proof}

\paragraph{Nonexistence of rarefaction waves.}
We move to the existence of a two-vortex viscosity solution on the torus. We make the important remark that in the Euclidean setting, such a quantity will never define a viscosity solution, because a rarefaction wave appears near $S_3(t)^-$ (\cite[Section 4.4]{CGV2022vortex}). We provide the following numerical experiment that shows this behavior. 
\begin{figure}[H]
    \centering
    \begin{minipage}{0.45\textwidth}
        \centering
        \includegraphics[width=\linewidth]{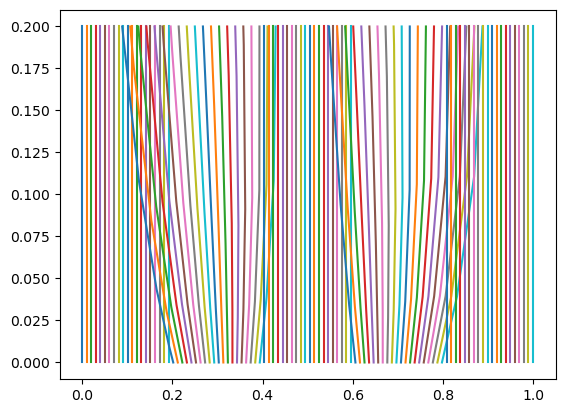}
    \end{minipage}
    \begin{minipage}{0.45\textwidth}
        \centering
        \includegraphics[width=\linewidth]{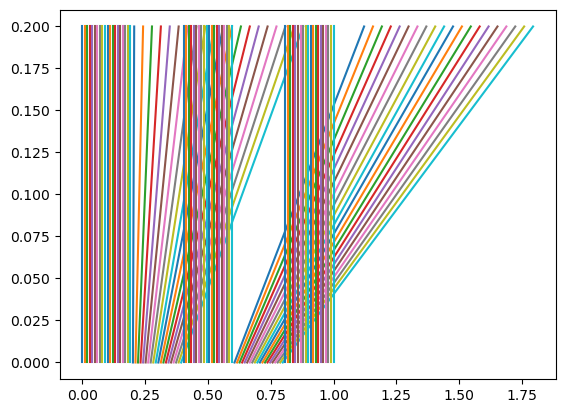}
    \end{minipage}

    \caption{Characteristics of the equation \eqref{eq:primitive}, where no rarefaction wave appears, and of \eqref{eq:primitiveCarillo}, where a rarefaction wave does appear. The initial condition is $u_0(x) := 0.4^{-1}\times (\indic_{0.2 < x < 0.4} + \indic_{0.6<x<0.8})$, and $m=2$. Time is the $y$ axes, and space is the $x$ axes. }
\end{figure}

\begin{prop}[Existence of a two-vortex viscosity solution]\label{prop:twovortex}
Let $m\ge 1$ and $\alpha \in (0,1)$. Consider the Equation~\eqref{eq:primitive}
with boundary conditions
\begin{equation}\label{eq:boundary_two}
  k(t,0)=0,\quad k(t,1)=\bar u_0 \quad\text{for } t\in(0,T),
\end{equation}
and initial datum $k(0,\cdot)=k_0(\cdot)$. Assume there exist $S_1^0,S_2^0,S_3^0,S_4^0$ such that
\begin{equation}\label{eq:ordering-two}
  0\le S_1^0<S_2^0\le \alpha \le S_3^0<S_4^0\le 1,
\end{equation}
\begin{equation*}
    k_0(s) = \begin{cases}
        0 & s<S_1^0, \\
       \displaystyle \al \bar u_0 \frac{s-S_1^0}{S_2^0-S_1^0} & S_1^0\le s<S_2^0, \\
        \al \bar u_0 & S_2^0\le s < S_3^0,\\
      \displaystyle  ( 1- \al) \bar u_0\frac{ s-S_3^0}{S_4^0-S_3^0} +\al\bar u_0 & S_3^0\le s < S_4^0, \\
        \bar u_0 & S_4^0\le s < 1.
    \end{cases}
\end{equation*}
Let $S_i\in C^1 ([0,T) )$, $i=1,2,3,4$, solve the Rankine-Hugoniot condition
\begin{align*}
  \frac{dS_1}{dt} &= -\alpha^{m-1}\bar u_0^{ m} \frac{S_1}{ (S_2-S_1 )^{m-1}}, 
  & \frac{dS_2}{dt} &= \alpha^{m-1}\bar u_0^{ m} \frac{\alpha-S_2}{ (S_2-S_1 )^{m-1}},\\
  \frac{dS_3}{dt} &= (1-\alpha)^{m-1}\bar u_0^{ m} \frac{\alpha-S_3}{ (S_4-S_3 )^{m-1}}, 
  & \frac{dS_4}{dt} &= (1-\alpha)^{m-1}\bar u_0^{ m} \frac{1-S_4}{ (S_4-S_3 )^{m-1}},
\end{align*}
with initial data $S_i(0)=S_i^0$, posed on the maximal time interval on which the ordering
\begin{equation}\label{eq:ordering-two-evol}
  0\le S_1(t)<S_2(t)\le \alpha \le S_3(t)<S_4(t)\le 1
\end{equation}
is preserved.
Define $k:(0,T)\times(0,1)\to\mathbb{R}$ by
\begin{equation}\label{def:twovortex}
    k(t,s) = \begin{cases}
        0 & s<S_1(t), \\
       \displaystyle \al \bar u_0 \frac{s-S_1(t)}{S_2(t)-S_1(t)} & S_1(t)\le s<S_2(t), \\
        \al \bar u_0 & S_2(t)\le s < S_3(t),\\
      \displaystyle  ( 1- \al) \bar u_0\frac{ s-S_3(t)}{S_4(t)-S_3(t)} +\al\bar u_0 & S_3(t)\le s < S_4(t), \\
        \bar u_0 & S_4(t)\le s < 1.
    \end{cases}
\end{equation}
Then $k\in C^0 ([0,T)\times[0,1] )$, satisfies the boundary conditions \eqref{eq:boundary_two} and $k(0,\cdot)=k_0$, and is a viscosity solution of Equation~\eqref{eq:primitive} on $(0,T)\times(0,1)$ in the sense of Definition~\ref{def:viscosity}.
\end{prop}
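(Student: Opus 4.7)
My plan is to mirror the single-vortex argument, treating each of the four moving interfaces $\Gamma_i := \{s=S_i(t)\}$, $i=1,\ldots,4$, separately. First I would check existence of a maximal time $T>0$ on which the Rankine-Hugoniot system admits a $C^1$ solution preserving the ordering \eqref{eq:ordering-two-evol}: the signs of the RH vector field force $S_1$ to be nonincreasing, $S_4$ nondecreasing, $S_2$ nondecreasing with $S_2\le\alpha$, and $S_3$ nonincreasing with $S_3\ge \alpha$. The only way to leave the regime is the eventual coalescence $S_2\uparrow\alpha\downarrow S_3$, which determines $T$. Continuity of $k$ and the boundary/initial conditions then follow immediately from \eqref{def:twovortex}.

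\textbf{Interior verification.} In the interior of each of the five open regions, $k$ is $C^1$. On the three flat pieces $\{s<S_1\}$, $\{S_2<s<S_3\}$, $\{s>S_4\}$ one has $\partial_s k\equiv 0$ and $\partial_t k\equiv 0$, so the equation is trivially satisfied. On the two ramps, $\partial_s k$ equals the time-dependent slope $\alpha_1(t):=\alpha\bar u_0/(S_2-S_1)$ on the left and $\alpha_2(t):=(1-\alpha)\bar u_0/(S_4-S_3)$ on the right; computing $\partial_t k = \alpha_j'(s-S_i)-\alpha_j S_i'$ and substituting the two relevant Rankine-Hugoniot equations reduces the PDE to an algebraic identity, exactly as in the single-vortex case.

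\textbf{Semi-differentials at the fronts.} The key structural observation is that near $\Gamma_1$ and $\Gamma_3$ the graph $s\mapsto k(t,s)$ is convex (the slope jumps up from $0$ to $\alpha_j$), while near $\Gamma_2$ and $\Gamma_4$ it is concave (the slope drops from $\alpha_j$ back to $0$). Reusing the convex-hull characterization of semi-differentials from the single-vortex proof, this forces $D^+k=\emptyset$ at $\Gamma_1,\Gamma_3$ and $D^-k=\emptyset$ at $\Gamma_2,\Gamma_4$, so at each front only one viscosity inequality remains to be checked. The nonempty semi-differential at each $\Gamma_i$ is the segment $\{(-S_i'\,p_2,\,p_2) : 0\le p_2\le \alpha_j\}$ with $j=1$ for $i\in\{1,2\}$ and $j=2$ for $i\in\{3,4\}$.

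\textbf{Viscosity inequalities and main obstacle.} Plugging the corresponding Rankine-Hugoniot ODE into the test expression $-S_i'\,p_2+p_2^m(k(t,S_i)-S_i\bar u_0)$ factorizes it as $p_2\,\bar u_0\,(k(t,S_i)-S_i\bar u_0)\,(p_2^{m-1}-\alpha_j^{m-1})$. The last factor is nonpositive thanks to $m\ge 1$ and $p_2\le \alpha_j$. Evaluating the first factor at the four interfaces using \eqref{eq:ordering-two-evol} yields $-S_1\bar u_0\le 0$, $\bar u_0(\alpha-S_2)\ge 0$, $\bar u_0(\alpha-S_3)\le 0$, $\bar u_0(1-S_4)\ge 0$ respectively, so the product carries exactly the right sign: nonnegative at $\Gamma_1,\Gamma_3$ (supersolution) and nonpositive at $\Gamma_2,\Gamma_4$ (subsolution). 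The only genuinely delicate point is the sign flip of $k-s\bar u_0$ across the middle plateau at the inner fronts $\Gamma_2,\Gamma_3$: this is where the choice of the plateau height $\alpha\bar u_0$ matters, since it is precisely what makes the two Rankine-Hugoniot systems compatible with the correct viscosity inequality on each side. Once this bookkeeping is done, the proof is complete.
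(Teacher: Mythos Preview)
Your proposal is correct and follows essentially the same route as the paper: verify the equation classically on the five smooth pieces, compute the semi-differentials at each interface via the convexity/concavity of the piecewise-affine profile (so that only one viscosity inequality survives at each $\Gamma_i$), and check that inequality using the Rankine--Hugoniot relations together with the ordering \eqref{eq:ordering-two-evol}. One harmless slip: the factorization of $-S_i'\,p_2+p_2^m(k(t,S_i)-S_i\bar u_0)$ is $p_2\,(k(t,S_i)-S_i\bar u_0)\,(p_2^{m-1}-\alpha_j^{m-1})$, without the extra $\bar u_0$ factor you wrote (since $S_i'=\alpha_j^{m-1}(k(t,S_i)-S_i\bar u_0)$); this does not affect the sign analysis.
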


\begin{proof}
Set the slopes
$$
\beta_1(t):=\frac{\alpha \bar u_0}{S_2(t)-S_1(t)},\qquad
\beta_2(t):=\frac{(1-\alpha)\bar u_0}{S_4(t)-S_3(t)}.
$$
Note that $k$ is continuous and piecewise affine in $s$ with $\partial_sk\in\{0,\beta_1,\beta_2\}$. Moreover the boundary conditions are satisfied.

\underline{Step 1: Interior of the five parts}. On the constant regions $\{s<S_1\}$, $\{S_2<s<S_3\}$, and $\{s>S_4\}$ we have $k= 0,\alpha\bar u_0,\bar u_0$, so $D^\pm k=\{(0,0)\}$ and Equation~\eqref{eq:primitive} is satisfied. Using the Rankine-Hugoniot condition as in the single vortex case, the PDE is also satisfied in the interior of these parts.

\underline{Step 2: Semi-differentials at the interface}
Set $\Gamma_i:=\{(t,s): s=S_i(t)\}$. With similar computation to the single vortex case we obtain
\begin{align*}
&\Gamma_1:\quad D^{-}k(t,S_{1}(t))=\{(-S_1'p_2,p_2):\,0\le p_2\le \beta_1\},\quad D^{+}k(t,S_{1}(t))=\emptyset;\\
&\Gamma_2:\quad D^{+}k(t,S_{2}(t))=\{(-S_2'p_2,p_2):\,0\le p_2\le \beta_1\},\quad D^{-}k(t,S_{2}(t))=\emptyset;\\
&\Gamma_3:\quad D^{-}k(t,S_{3}(t))=\{(-S_3'p_2,p_2):\,0\le p_2\le \beta_2\},\quad D^{+}k(t,S_{3}(t))=\emptyset;\\
&\Gamma_4:\quad D^{+}k(t,S_{4}(t))=\{(-S_4'p_2,p_2):\,0\le p_2\le \beta_2\},\quad D^{-}k(t,S_{4 }(t))=\emptyset.
\end{align*}

\underline{Step 3: Viscosity solutions}
It only remains to prove that $k$ is a supersolution on $\Gamma_1, \Gamma_3$ and a subsolution on $\Gamma_2, \Gamma_4$. 

On $\Gamma_1$:
Here $k(t,S_1)=0$ and $(p_1,p_2)\in D^-k$ has $p_1=-S_1' p_2$, $0\le p_2\le\beta_1$. Then
$$
-S_1'p_2+p_2^{m}(0-S_1\bar u_0)
=p_2(-S_1'-\bar u_0 S_1\,p_2^{m-1})
\ge p_2\,\bar u_0 S_1(\beta_1^{m-1}-p_2^{m-1})\ge0,
$$
because $S_1'=-\beta_1^{m-1}\bar u_0 S_1$ and $0\le p_2\le\beta_1$. Thus $k$ is a supersolution on $\Gamma_1$. 

On $\Gamma_2$:
Here $k(t,S_2)=\alpha\bar u_0$, $(p_1,p_2)\in D^+k$ has $p_1=-S_2'p_2$, $0\le p_2\le\beta_1$. Thus
 $$
-S_2'p_2+p_2^{m}\bar u_0(\alpha-S_2)
=p_2\,\bar u_0(\alpha-S_2) (p_2^{m-1}-\beta_1^{m-1} )\le0,
 $$
since $S_2'=\beta_1^{m-1}\bar u_0(\alpha-S_2)$ and by assumption $S_2\le\alpha$, hence $\alpha-S_2\ge0$ and $p_2^{m-1}\le\beta_1^{m-1}$. Thus $k$ is a subsolution on $\Gamma_2$. 

On $\Gamma_3$:
Here $k(t,S_3)=\alpha\bar u_0$, $(p_1,p_2)\in D^-k$ has $p_1=-S_3'p_2$, $0\le p_2\le\beta_2$. Then
 $$
-S_3'p_2+p_2^{m}\bar u_0(\alpha-S_3)
=p_2\,\bar u_0(\alpha-S_3) (p_2^{m-1}-\beta_2^{m-1} )\ge0,
 $$
because $S_3'=\beta_2^{m-1}\bar u_0(\alpha-S_3)$ and $S_3\ge\alpha$ implies $\alpha-S_3\le0$. Thus $k$ is a supersolution on $\Gamma_3$. 

On $\Gamma_4$:
Here $k(t,S_4)=\bar u_0$, $(p_1,p_2)\in D^+k$ has $p_1=-S_4'p_2$, $0\le p_2\le\beta_2$. Hence
 $$
-S_4'p_2+p_2^{m}\bar u_0(1-S_4)
=p_2\,\bar u_0(1-S_4) (p_2^{m-1}-\beta_2^{m-1} )\le0,
 $$
since $S_4'=\beta_2^{m-1}\bar u_0(1-S_4)$ and $1-S_4\ge0$. Thus $k$ is a subsolution on $\Gamma_4$. 

\end{proof}

\printbibliography

\end{document}